\numberwithin{equation}{section}
\theoremstyle{plain}
\newtheorem{Lemma}{Lemma}[section]
\newtheorem{Proposition}[Lemma]{Proposition}
\newtheorem{Theorem}[Lemma]{Theorem}
\newtheorem{Corollary}[Lemma]{Corollary}
\theoremstyle{definition}
\newtheorem{Example}[Lemma]{Example}
\newtheorem{Examples}[Lemma]{Examples}
\newtheorem{Remark}[Lemma]{Remark}
\def\cadlag{c\`{a}dl\`{a}g }
\def\oe{\"{o}}
\begin{document}

\title{\textbf{Support characterization for regular path-dependent stochastic Volterra integral equations}}

\author{Alexander Kalinin\footnote{Department of Mathematics, Imperial College London, United Kingdom. {\tt alex.kalinin@mail.de}. The author gratefully acknowledges support from Imperial College through a Chapman fellowship.}}

\date{\today}
\maketitle

\begin{abstract}
We consider a stochastic Volterra integral equation with regular path-dependent coefficients and a Brownian motion as integrator in a multidimensional setting. Under an imposed absolute continuity condition, the unique solution is a semimartingale that admits almost surely H\oe lder continuous paths. Based on functional It{\^o} calculus, we prove that the support of its law in the H\oe lder norm can be described by a flow of mild solutions to ordinary integro-differential equations that are constructed by means of the vertical derivative of the diffusion coefficient.
\end{abstract}

\noindent
{\bf MSC2010 classification:} 60H20, 28C20, 60G17, 45D05, 45J05.\\
{\bf Keywords:} support of a measure, path-dependent Volterra process, functional Volterra integral equation, functional It{\^o} calculus, vertical derivative, H\oe lder space.

\section{Support representations via flows}

The support of the law of a continuous stochastic process consists of all continuous paths around any neighborhood the process may remain with positive probability. Determining this class of paths for a diffusion process, viewed as solution to a stochastic differential equation (SDE), establishes a relation between the coefficients of the equation and the law of its solution.  

In the pioneering work of Stroock and Varadhan\cite{StroockVaradhanSupp}, the support of the law of a diffusion process is characterized by an associated flow of classical solutions to ordinary differential equations. While Aida~\cite{AidaSuppHilbert} generalizes the time-homogeneous case to a Hilbert space, allowing for an infinite dimension, Gy\oe ngy and Pr\oe hle~\cite{GyoengyProehleSupp} deal with coefficients that are of affine growth and not necessarily bounded. Moreover, Pakkanen~\cite{PakkanenSupp} provides sufficient conditions for a stochastic integral to have the full support property. 

An extension of the Stroock-Varadhan support theorem to any $\alpha$-H\oe lder norm, where $\alpha\in (0,1/2)$, is given in Ben Arous et al.~\cite{LedouxEtAlHoelder}. The case of time-homogeneous coefficients was independently proven by Millet and Sanz-Sol{\'e}\cite{MilletSoleHoelder} and later extended to a parabolic stochastic partial differential equation (SPDE) in Bally et al.~\cite{BallyEtAlSPDE}. By using the vertical derivative as functional space derivative and generalizing the approach in~\cite{MilletSoleHoelder} with the relevant Girsanov changes of measures, a path-dependent version of the Stroock-Varadhan support theorem in H\oe lder norms was recently derived in~\cite{ContKalininSupp}. The contribution of this article is to extend this support characterization to stochastic Volterra integral equations with regular path-dependent coefficients by providing a flow of mild solutions to ordinary integro-differential equations.  

Let $r,T\geq 0$ with $r < T$ and $d,m\in\mathbb{N}$. We work with the separable Banach space $C([0,T],\mathbb{R}^{m})$ of all $\mathbb{R}^{m}$-valued continuous paths on $[0,T]$, endowed with the supremum norm given by $\|x\|_{\infty}=\sup_{t\in [0,T]}|x(t)|$, where $|\cdot|$ is used as absolute value function, Euclidean norm or Hilbert-Schmidt norm. Throughout, $\hat{x}\in C([0,T],\mathbb{R}^{m})$ and
\[
b:[r,T]^{2}\times C([0,T],\mathbb{R}^{m})\rightarrow\mathbb{R}^{m}\quad\text{and}\quad \sigma:[r,T]^{2}\times C([0,T],\mathbb{R}^{m})\rightarrow\mathbb{R}^{m\times d}
\]
are two product measurable maps that are \emph{non-anticipative} in the sense that they satisfy $b(t,s,x)$ $= b(t,s,x^{s})$ and $\sigma(t,s,x) = \sigma(t,s,x^{s})$ for all $s,t\in [r,T]$ with $s\leq t$ and each $x\in C([0,T],\mathbb{R}^{m})$, where $x^{s}$ denotes the path $x$ stopped at time $s$. 

On a filtered probability space $(\Omega,\mathscr{F},(\mathscr{F}_{t})_{t\in [0,T]},P)$ that satisfies the usual conditions and which possesses a standard $d$-dimensional $(\mathscr{F}_{t})_{t\in [0,T]}$-Brownian motion $W$, we consider the following path-dependent stochastic Volterra integral equation:
\begin{equation}\label{SVIE}
X_{t} = X_{r} + \int_{r}^{t}b(t,s,X)\,ds + \int_{r}^{t}\sigma(t,s,X)\,dW_{s}\quad\text{a.s.}
\end{equation}
for $t\in [r,T]$ with initial condition $X_{q} = \hat{x}(q)$ for $q\in [0,r]$ a.s. An absolute continuity and affine growth condition on the coefficients $b$ and $\sigma$ ensure that any solution to~\eqref{SVIE} is a semimartingale with delayed H\oe lder continuous trajectories.

In fact, for each $\alpha\in (0,1]$ let $C_{r}^{\alpha}([0,T],\mathbb{R}^{m})$ represent the non-separable Banach space of all $x\in C([0,T],\mathbb{R}^{m})$ that are $\alpha$-H\oe lder continuous on $[r,T]$, endowed with the \emph{delayed $\alpha$-H\oe lder norm} given by
\begin{equation}\label{Hoelder Norm}
\|x\|_{\alpha,r}:=\|x^{r}\|_{\infty} + \sup_{s,t\in [r,T]:\,s\neq t} \frac{|x(s) - x(t)|}{|s-t|^{\alpha}}.
\end{equation}
By convenience, we set $C_{r}^{0}([0,T],\mathbb{R}^{m}):=C([0,T],\mathbb{R}^{m})$ and $\|\cdot\|_{0,r}:=\|\cdot\|_{\infty}$. Then, under the conditions stated below, there is a unique strong solution to~\eqref{SVIE} whose sample paths belong a.s.~to the \emph{delayed H\oe lder space} $C_{r}^{\alpha}([0,T],\mathbb{R}^{m})$ for any $\alpha\in (0,1/2)$.

For $p\geq 1$ consider the separable Banach space $W_{r}^{1,p}([0,T],\mathbb{R}^{m})$ of all $x\in C([0,T],\mathbb{R}^{m})$ that are absolutely continuous on $[r,T]$ with a $p$-fold Lebesgue-integrable weak derivative $\dot{x}$, equipped with the \emph{delayed Sobolev $L^{p}$-norm} defined by
\begin{equation}\label{Sobolev Norm}
\|x\|_{1,p,r}:=\|x^{r}\|_{\infty} + \bigg(\int_{r}^{t}|\dot{x}(s)|^{p}\,ds\bigg)^{1/p}.
\end{equation}
Then it holds that $W_{r}^{1,p}([0,T],\mathbb{R}^{m})\subsetneq C_{r}^{1/q}([0,T],\mathbb{R}^{m})$ and $\|x\|_{1/q,r}\leq \|x\|_{1,p,r}$ for all $x\in W_{r}^{1,p}([0,T],\mathbb{R}^{m})$ whenever $p > 1$ and $q$ is its dual exponent. By allowing infinite values, we extend the definitions of $\|\cdot\|_{\infty}$ and $\|\cdot\|_{\alpha,r}$ at~\eqref{Hoelder Norm} to each path $x:[0,T]\rightarrow\mathbb{R}^{m}$ and the definition of $\|\cdot\|_{1,p,r}$ at~\eqref{Sobolev Norm} to every $x\in W_{r}^{1,1}([0,T],\mathbb{R}^{m})$.

Based on the non-separable Banach space $D([0,T],\mathbb{R}^{m})$ of all $\mathbb{R}^{m}$-valued \cadlag paths on $[0,T]$, endowed with the supremum norm $\|\cdot\|_{\infty}$, we use the following pseudometric on $[r,T]\times D([0,T],\mathbb{R}^{m})$ given by
\[
d_{\infty}((t,x),(s,y)):= |t-s|^{1/2} + \|x^{t} - y^{s}\|_{\infty}.
\]
Then a functional on this Cartesian product that is $d_{\infty}$-continuous is also non-anticipative and Lipschitz continuity relative to $d_{\infty}$ merely requires $1/2$-H\oe lder continuity in the time variable.

Let us now state the conditions under which the support theorem holds. By refering to \emph{horizontal} and \emph{vertical differentiability} of non-anticipative functionals from~\cite{ContFournieFunc, Dupire}, we in particular require that certain time and path space components of $\sigma$ are of class $\mathbb{C}^{1,2}$, a property to be recalled in Section~\ref{Section 2.1}. In this context, let $\partial_{s}$ be the horizontal, $\partial_{x}$ the vertical and $\partial_{xx}$ the second-order vertical differential operator. 

To have a simple notation if these first- and second-order space derivatives appear, we set $\|y\|:=(\sum_{k=1}^{m}\sum_{l=1}^{d}|y_{k,l}|^{2})^{1/2}$ if $y\in (\mathbb{R}^{1\times m})^{m\times d}$ or $y\in (\mathbb{R}^{m\times m})^{m\times d}$. Further, let $\mathbbm{I}_{d}$ be the identity matrix in $\mathbb{R}^{d\times d}$ and $A'$ denote the transpose of a matrix $A\in\mathbb{R}^{d\times m}$.

\begin{enumerate}[label=(C.\arabic*), ref= C.\arabic*, leftmargin=\widthof{(C.3)} + \labelsep]
\item\label{C.1} The map $[r,t)\times C([0,T],\mathbb{R}^{m})\rightarrow\mathbb{R}^{m\times d}$, $(s,x)\mapsto\sigma(t,s,x)$ is of class $\mathbb{C}^{1,2}$ for each $t\in (r,T]$, the maps $b(\cdot,s,x)$ and $\sigma(\cdot,s,x)$ are absolutely continuous on $[s,T]$ and $\partial_{x}\sigma(\cdot,s,x)$ is absolutely continuous on $(s,T]$ for any $s\in [r,T)$ and each $x\in C([0,T],\mathbb{R}^{m})$.

\item\label{C.2} The maps $\sigma$, $\partial_{x}\sigma$ and its weak time derivatives $\partial_{t}\sigma$, $\partial_{t}\partial_{x}\sigma$ are bounded. Further, there are $c,\lambda,\eta\geq 0$ and $\kappa\in [0,1)$ such that
\begin{align*}
|b(s,s,x)| + |\partial_{t}b(t,s,x)| &\leq c\big(1 + \|x\|_{\infty}^{\kappa}\big)\quad\text{and}\\
|\partial_{s}\sigma(t,s,x)| + \|\partial_{xx}\sigma(t,s,x)\|&\leq c\big(1 + \|x\|_{\infty}^{\eta}\big)\end{align*}
 for all $s,t\in [r,T)$ with $s < t$ and each $x\in C([0,T],\mathbb{R}^{m})$.
 
\item\label{C.3} There is $\lambda\geq 0$ satisfying $|b(s,s,x) - b(s,s,y)| + |\partial_{t}b(t,s,x) - \partial_{t}b(t,s,y)|$ $\leq \lambda\|x-y\|_{\infty}$ and
\begin{align*}
|\sigma(u,t,x)-\sigma(u,s,y)| + |\partial_{u}\sigma(u,t,x) - \partial_{u}\sigma(u,s,y)|\\
+ \|\partial_{x}\sigma(u,t,x) - \partial_{x}\sigma(u,s,y)\| &\leq\lambda d_{\infty}((t,x),(s,y))
\end{align*}
for any $s,t,u\in [r,T)$ with $s < t < u$ and every $x,y\in C([0,T],\mathbb{R}^{m})$.
\end{enumerate}

Under the assumption that $\sigma(t,\,\cdot,\cdot)$ is of class $\mathbb{C}^{1,2}$ on $[r,t)\times C([0,T],\mathbb{R}^{m})$ for each $t\in (r,T]$, we may introduce the map $\rho:[r,T]^{2}\times C([0,T],\mathbb{R}^{m})\rightarrow\mathbb{R}^{m}$, which serves as \emph{correction term}, coordinatewise by
\begin{equation}\label{Correction Term}
\rho_{k}(t,s,x) = \sum_{l=1}^{d}\partial_{x}\sigma_{k,l}(t,s,x)\sigma(s,s,x)e_{l},
\end{equation}
if $s < t$ and, $\rho_{k}(t,s,x):=0$, otherwise. Here, $\{e_{1},\dots,e_{d}\}$ stands for the standard basis of $\mathbb{R}^{d}$ and $[r,t)\times C([0,T],\mathbb{R}^{m})\rightarrow\mathbb{R}^{1\times m}$, $(s,x)\mapsto \partial_{x}\sigma_{k,l}(t,s,x)$ is the vertical derivative of the $(k,l)$-entry of the map $[r,t)\times C([0,T],\mathbb{R}^{m})\rightarrow\mathbb{R}^{m\times d}$, $(s,x)\mapsto \sigma(t,s,x)$ for each $t\in (r,T]$, every $k\in\{1,\dots,m\}$ and any $l\in\{1,\dots,d\}$.

Finally, to describe the support of the unique strong solution to~\eqref{SVIE} by a flow, we study the following Volterra integral equation associated to any $h\in W_{r}^{1,p}([0,T],\mathbb{R}^{d})$ with $p\geq 1$. Namely,
\begin{equation}\label{Support VIE}
x_{h}(t) = x_{h}(r) + \int_{r}^{t}(b-(1/2)\rho)(t,s,x_{h})\,ds + \int_{r}^{t}\sigma(t,s,x_{h})\,dh(s)
\end{equation}
for $t\in [r,T]$. By adding $\hat{x}$ as initial condition, the solution $x_{h}$ lies in the \emph{delayed Sobolev space} $W_{r}^{1,p}([0,T],\mathbb{R}^{m})$, since it can also be viewed as a \emph{mild solution} to an associated ordinary integro-differential equation, as concisely justified in Section~\ref{Section 2.2}.

\begin{Lemma}\label{Support Auxiliary Lemma}
Let~\eqref{C.1}-\eqref{C.3} be valid.
\begin{enumerate}[(i)]
\item Pathwise uniqueness holds for~\eqref{SVIE} and there is a unique strong solution $X$ such that $X^{r}=\hat{x}^{r}$ a.s. Further, $X$ is a semimartingale and $E[\|X\|_{\alpha,r}^{p}] < \infty$ for any $\alpha\in [0,1/2)$ and all $p\geq 1$.

\item For any $p\geq 1$ and each $h\in W_{r}^{1,p}([0,T],\mathbb{R}^{d})$, there is a unique solution $x_{h}$ to~\eqref{Support VIE} satisfying $x_{h}^{r} = \hat{x}^{r}$ and we have $x_{h}\in W_{r}^{1,p}([0,T],\mathbb{R}^{m})$. Moreover, the flow map
\begin{equation}\label{Flow Map}
W_{r}^{1,p}([0,T],\mathbb{R}^{d})\rightarrow W_{r}^{1,p}([0,T],\mathbb{R}^{m}),\quad h\mapsto x_{h}
\end{equation}
is Lipschitz continuous on bounded sets.
\end{enumerate}
\end{Lemma}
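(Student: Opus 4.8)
The plan is to first strip the Volterra kernels of their dependence on the terminal time. Using \eqref{C.1} I would write $b(t,s,x) = b(s,s,x) + \int_{s}^{t}\partial_{u}b(u,s,x)\,du$ and $\sigma(t,s,x) = \sigma(s,s,x) + \int_{s}^{t}\partial_{u}\sigma(u,s,x)\,du$, and apply the classical and the stochastic Fubini theorem — legitimate since $\partial_{t}b(\cdot,s,x)$ has sublinear growth and $\partial_{t}\sigma$ is bounded by \eqref{C.2} — to recast \eqref{SVIE} as
\[
X_{t} = \hat{x}(r) + \int_{r}^{t}\Big(b(u,u,X) + \int_{r}^{u}\partial_{u}b(u,\tau,X)\,d\tau + \int_{r}^{u}\partial_{u}\sigma(u,\tau,X)\,dW_{\tau}\Big)\,du + \int_{r}^{t}\sigma(s,s,X)\,dW_{s}
\]
almost surely for $t\in[r,T]$. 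This form already exhibits any solution as a semimartingale with absolutely continuous finite-variation part and martingale part $\int_{r}^{\cdot}\sigma(s,s,X)\,dW_{s}$. Next I would establish existence and pathwise uniqueness by a standard Picard iteration for stochastic Volterra equations on the $(\mathscr{F}_{t})$-adapted continuous processes $Y$ with $Y^{r}=\hat{x}^{r}$ and $\|Y\|_{\infty}\in L^{p}$: the moment bounds, and in particular $E[\|X\|_{\infty}^{p}]<\infty$, come from Burkholder--Davis--Gundy together with the growth in \eqref{C.2} (the exponent $\kappa<1$ making the final Gronwall step robust), while the Lipschitz bounds of \eqref{C.3} make a sufficiently high iterate of the solution map a contraction. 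For the H\oe lder estimate I would use the display above: Minkowski's inequality bounds the $L^{p}$-increment of the finite-variation part over $[s,t]$ by $C(t-s)$, its integrand being bounded in $L^{p}$ uniformly in $u$ (by the sublinear growth of $b$ and $\partial_{t}b$, the moment bound on $\|X\|_{\infty}$, and Burkholder--Davis--Gundy for $\int_{r}^{u}\partial_{u}\sigma(u,\tau,X)\,dW_{\tau}$), while Burkholder--Davis--Gundy applied to $\int_{s}^{t}\sigma(u,u,X)\,dW_{u}$ with $\sigma$ bounded bounds the martingale increment by $C_{p}(t-s)^{1/2}$; hence $E[|X_{t}-X_{s}|^{p}]\le C_{p}(t-s)^{p/2}$ for $r\le s\le t\le T$ and $p\ge1$, and the Garsia--Rodemich--Rumsey estimate, together with $X^{r}=\hat{x}^{r}$ contributing only the constant $\|\hat{x}^{r}\|_{\infty}$, delivers $E[\|X\|_{\alpha,r}^{p}]<\infty$ for all $\alpha\in[0,1/2)$ and $p\ge1$.

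\textbf{Part (ii), existence and the Sobolev property.} For fixed $h\in W_{r}^{1,p}([0,T],\mathbb{R}^{d})$, \eqref{Support VIE} is deterministic with $dh=\dot{h}\,ds$, and I would run the same reduction, now also on the correction term: since $\rho(\cdot,s,x)$ is absolutely continuous only on $(s,T]$ by \eqref{C.1}, one writes $\rho(t,s,x) = \rho(s{+},s,x) + \int_{s}^{t}\partial_{u}\rho(u,s,x)\,du$ for $t>s$, where $\rho_{k}(s{+},s,x) = \sum_{l=1}^{d}\partial_{x}\sigma_{k,l}(s{+},s,x)\sigma(s,s,x)e_{l}$ is Lipschitz in $x$ by \eqref{C.3} and $\partial_{u}\rho_{k}(u,s,x) = \sum_{l=1}^{d}\partial_{u}\partial_{x}\sigma_{k,l}(u,s,x)\sigma(s,s,x)e_{l}$ is bounded by \eqref{C.2}. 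After Fubini, \eqref{Support VIE} becomes $x_{h}(t) = \hat{x}(r) + \int_{r}^{t}F_{h}(u,x_{h})\,du$ for an explicit non-anticipative functional $F_{h}$ that is affine in $\dot{h}$, of sublinear growth in $\|x_{h}\|_{\infty}$, and locally Lipschitz in the path variable, with the $\dot{h}$-terms controlled by $(T-r)^{1/q}\|\dot{h}\|_{L^{p}}$ via H\oe lder's inequality. A contraction argument on $C([0,T],\mathbb{R}^{m})$ restricted to paths equal to $\hat{x}$ on $[0,r]$ — using that the ``Lipschitz rate'' $u\mapsto C_{0}+\lambda|\dot{h}(u)|+\lambda\int_{r}^{u}|\dot{h}|$ is integrable, which forces a finite iterate to be a contraction — then yields the unique such solution $x_{h}$, and reading off $\dot{x}_{h}=F_{h}(\cdot,x_{h})\in L^{p}([r,T])$ shows $x_{h}\in W_{r}^{1,p}([0,T],\mathbb{R}^{m})$; this is precisely the mild-solution reformulation of Section~\ref{Section 2.2}.

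\textbf{Part (ii), Lipschitz continuity of the flow.} Here I would first obtain the a priori bound $\|x_{h}\|_{1,p,r}\le C(1+\|h\|_{1,p,r})$ from the sublinear growth of $F_{h}$, the elementary estimate $\|x_{h}\|_{\infty}\le\|\hat{x}\|_{\infty}+(T-r)^{1/q}\|\dot{x}_{h}\|_{L^{p}}$, and Young's inequality to absorb the $\kappa$-power ($\kappa<1$). Then, for $h_{1},h_{2}$ in a fixed bounded set, I would estimate $\dot{x}_{h_{1}}-\dot{x}_{h_{2}}$ termwise — the Lipschitz bounds of \eqref{C.3} contributing $\lesssim\|x_{h_{1}}-x_{h_{2}}\|_{\infty}$ with coefficients controlled by the a priori bound, the mismatch of $\dot{h}$ contributing $\lesssim\|\dot{h}_{1}-\dot{h}_{2}\|_{L^{p}}$ — and close the loop with a Gronwall step in $t$ giving $\|x_{h_{1}}-x_{h_{2}}\|_{\infty}\le C\|h_{1}-h_{2}\|_{1,p,r}$; feeding this back into the termwise $L^{p}$-estimate yields $\|\dot{x}_{h_{1}}-\dot{x}_{h_{2}}\|_{L^{p}}\le C\|h_{1}-h_{2}\|_{1,p,r}$, and since $x_{h_{1}}^{r}=x_{h_{2}}^{r}=\hat{x}^{r}$ this is the asserted Lipschitz estimate in $\|\cdot\|_{1,p,r}$.

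\textbf{Main obstacle.} The hard part will be the correction term $\rho$. On one side, its diagonal value being set to $0$ while $\rho(\cdot,s,x)$ is only absolutely continuous on the open interval $(s,T]$ forces the Fubini reduction of \eqref{Support VIE} to retain the one-sided boundary value $\rho(s{+},s,x)$. More seriously, the time derivative $\partial_{u}\rho$ carries the mixed derivative $\partial_{u}\partial_{x}\sigma$, for which \eqref{C.1}--\eqref{C.3} supply only a uniform bound and no Lipschitz — or even continuity — control in the path variable; so the $W_{r}^{1,p}$-comparison of two flow values cannot rest on a naive Lipschitz bound for $\partial_{u}\partial_{x}\sigma$ and must be organized around exactly the quantities \eqref{C.3} does control, namely $\sigma$, $\partial_{u}\sigma$, $\partial_{x}\sigma$ and hence $\rho(s{+},s,\cdot)$, while keeping every constant uniform in the time arguments. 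The remaining technical points will be justifying the stochastic Fubini theorem in this path-dependent framework under only the integrability granted by \eqref{C.2}, and propagating the sublinear exponents $\kappa,\eta$ cleanly through the Gronwall estimates.
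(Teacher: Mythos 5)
Your overall route coincides with the paper's: reduce~\eqref{SVIE} to a path-dependent SDE (and~\eqref{Support VIE} to an ordinary integro-differential equation) by the Fubini argument of Section~\ref{Section 2.2}, obtain existence and pathwise uniqueness by a fixed-point argument, establish a priori moment and increment bounds via Burkholder--Davis--Gundy and Gronwall, promote these to $\alpha$-H\oe lder bounds via a Kolmogorov--Chentsov type estimate, and close the flow Lipschitz estimate by a single Gronwall step in the delayed Sobolev norm. The paper, however, does not carry out the Picard iteration; it outsources existence, pathwise uniqueness and strong existence to the preprint~\cite{KalininVolterra} (via Lemma~\ref{Auxiliary Convergence Lemma}(ii), setting $\underline{B}=b$, $B_H=\overline{B}=0$, $\Sigma=\sigma$, $\xi=\hat{x}$, so the Girsanov step there is vacuous), and uses the explicit estimate~\eqref{Kolmogorov-Chentsov Estimate} together with Proposition~\ref{Sequence L-p Estimate Proposition} for the H\oe lder moment bound. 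Your sketch is more self-contained but targets the same inequalities with the same exponent bookkeeping ($\kappa<1$ making the Gronwall step robust). Your Part~(ii) Gronwall estimate and the resulting $\|x_{g}-x_{h}\|_{1,p,r}\lesssim\|g-h\|_{1,p,r}$ on bounded sets is exactly the paper's argument.

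The genuine problem is the one you name in your ``Main obstacle'' paragraph but do not resolve: the $\|\cdot\|_{1,p,r}$-Lipschitz continuity of the flow requires comparing $\dot x_{h_1}-\dot x_{h_2}$, and $\dot{x}_{h}(t)$ contains $\int_{r}^{t}\partial_{t}\rho(t,s,x_{h})\,ds$, where $\partial_{t}\rho_{k}(t,s,x)=\sum_{l}\partial_{t}\partial_{x}\sigma_{k,l}(t,s,x)\sigma(s,s,x)e_{l}$. A termwise comparison of this integral produces $\partial_{t}\partial_{x}\sigma(t,s,x_{h_1})-\partial_{t}\partial_{x}\sigma(t,s,x_{h_2})$, for which~\eqref{C.1}--\eqref{C.3} only give a uniform bound, not a path-Lipschitz modulus --- as you observe. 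Saying the argument ``must be organized around'' $\sigma$, $\partial_{u}\sigma$, $\partial_{x}\sigma$ is a correct diagnosis but not a proof: the sup-norm comparison $\|x_{h_1}-x_{h_2}\|_{\infty}$ can indeed be closed using only these quantities (since $\int_{r}^{t}\rho(t,s,x)\,ds$ can be compared without differentiating in $t$), but the Sobolev-norm comparison of the weak derivatives cannot. The paper sidesteps this: it simply asserts that there exists $\lambda_{0}$ with $|\partial_{t}\rho(t,s,x)-\partial_{t}\rho(t,s,y)|\leq\lambda_{0}\|x-y\|_{\infty}$, a bound that does not follow from~\eqref{C.1}--\eqref{C.3} as literally written and for which the supporting derivation is delegated to~\cite{KalininVolterra}. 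So to make your proposal complete you must either (a) strengthen~\eqref{C.3} to also control $\partial_{t}\partial_{x}\sigma$ in the path variable, or (b) find the decomposition (e.g.\ integration by parts in $t$) that expresses $\int_{r}^{t}\partial_{t}\rho(t,s,x_{h_1}) - \partial_{t}\rho(t,s,x_{h_2})\,ds$ through $\rho(t,s,\cdot)$ and $\overline{\rho}(s,s,\cdot)$ alone, which is precisely the step you leave open.
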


Having clarified matters of uniqueness, existence and regularity, let us now consider the main result of this paper. Namely, a \emph{support characterization} of solutions to~\eqref{SVIE} in delayed H\oe lder norms.

\begin{Theorem}\label{Support Theorem}
Let~\eqref{C.1}-\eqref{C.3} hold, $\alpha\in [0,1/2)$ and $p\geq 2$. Then the support of the image measure of the unique strong solution $X$ to~\eqref{SVIE} in $C_{r}^{\alpha}([0,T],\mathbb{R}^{m})$ is the closure of the set of all solutions $x_{h}$ to~\eqref{Support VIE}, where $h\in W_{r}^{1,p}([0,T],\mathbb{R}^{d})$. That is,
\begin{equation}\label{Support Characterization}
\mathrm{supp}(P\circ X^{-1}) = \overline{\{x_{h}\,|h\in W_{r}^{1,p}([0,T],\mathbb{R}^{d})\}}\quad\text{in $C_{r}^{\alpha}([0,T],\mathbb{R}^{m})$.}
\end{equation}
\end{Theorem}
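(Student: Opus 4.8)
The plan is to prove the two inclusions $\mathcal{K}\subseteq\mathcal{S}$ and $\mathcal{S}\subseteq\mathcal{K}$ separately, where $\mathcal{S}:=\mathrm{supp}(P\circ X^{-1})$ and $\mathcal{K}:=\overline{\{x_{h}\,|\,h\in W_{r}^{1,p}([0,T],\mathbb{R}^{d})\}}$ are both taken in $C_{r}^{\alpha}([0,T],\mathbb{R}^{m})$, following the scheme developed for path-dependent SDEs in~\cite{ContKalininSupp} --- which itself refines the H\oe lder-norm argument of~\cite{MilletSoleHoelder} --- and adapting it to the Volterra case via functional It\^o calculus. The structural fact that carries this adaptation is that, by~\eqref{C.1} and a stochastic Fubini argument, inserting $b(t,s,x)=b(s,s,x)+\int_{s}^{t}\partial_{t}b(u,s,x)\,du$ and $\sigma(t,s,x)=\sigma(s,s,x)+\int_{s}^{t}\partial_{t}\sigma(u,s,x)\,du$ into~\eqref{SVIE} turns it into the canonical semimartingale decomposition
\[
X_{t}=\hat{x}(r)+\int_{r}^{t}\Big(b(u,u,X)+\int_{r}^{u}\partial_{t}b(u,s,X)\,ds+\int_{r}^{u}\partial_{t}\sigma(u,s,X)\,dW_{s}\Big)\,du+\int_{r}^{t}\sigma(s,s,X)\,dW_{s}
\]
for $t\in[r,T]$, exhibiting $\sigma(s,s,\cdot)$ as the \emph{effective diffusion coefficient}; this is precisely why the correction term~\eqref{Correction Term} is built from the vertical derivative of the kernel $(s,x)\mapsto\sigma(t,s,x)$ contracted with $\sigma(s,s,\cdot)$, and applying the same decomposition to~\eqref{Support VIE} and to the approximating equations below is what makes the It\^o--Stratonovich bookkeeping tractable.

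For $\mathcal{S}\subseteq\mathcal{K}$ I would argue by a Wong--Zakai approximation. Let $W^{n}$ be the polygonal interpolation of $W$ along the $n$-th dyadic partition of $[r,T]$, extended by $W^{r}$ on $[0,r]$; then $W^{n}(\omega)$ is a.s.\ Lipschitz on $[r,T]$, so $W^{n}(\omega)\in W_{r}^{1,p}([0,T],\mathbb{R}^{d})$, and by Lemma~\ref{Support Auxiliary Lemma}(ii) the path $x_{W^{n}}$ is well defined. The key estimate is that $\|X-x_{W^{n}}\|_{\alpha,r}\to 0$ in probability as $n\to\infty$: combining the moment bound $E[\|X\|_{\alpha,r}^{p}]<\infty$ from Lemma~\ref{Support Auxiliary Lemma}(i) with the decomposition above and the regularity in~\eqref{C.1}--\eqref{C.3}, one verifies that passing from $W$ to $W^{n}$ generates exactly the first-order term $\tfrac12\int_{r}^{t}\rho(t,s,x_{W^{n}})\,ds$ --- the quadratic-variation contribution, evaluated through $\partial_{x}\sigma(t,s,\cdot)$ and the effective diffusion $\sigma(s,s,\cdot)$ --- which is cancelled by the $-\tfrac12\rho$ placed in~\eqref{Support VIE}, so that the limit is $X$ itself. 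Convergence in probability holds along a deterministic subsequence, hence a.s.\ along it, so $X\in\mathcal{K}$ a.s.; as $\mathcal{K}$ is closed, this forces $\mathcal{S}\subseteq\mathcal{K}$.

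For $\mathcal{K}\subseteq\mathcal{S}$, since $\mathcal{S}$ is closed it suffices to show $x_{h}\in\mathcal{S}$, i.e.\ $P(\|X-x_{h}\|_{\alpha,r}<\varepsilon)>0$ for every $\varepsilon>0$, for a fixed $h\in W_{r}^{1,p}([0,T],\mathbb{R}^{d})$; note that $x_{h}\in C_{r}^{\alpha}([0,T],\mathbb{R}^{m})$ because $p\geq 2$ yields the embedding $W_{r}^{1,p}([0,T],\mathbb{R}^{m})\hookrightarrow C_{r}^{\alpha}([0,T],\mathbb{R}^{m})$. Write $h^{n}$ for the polygonal interpolation of $h$, decompose $W=(W-W^{n})+W^{n}$ into its independent bridge and polygonal parts, and let $\tilde{X}^{n}$ be the solution of~\eqref{SVIE} with $W$ replaced by the semimartingale $(W-W^{n})+h^{n}$, i.e.\ by $W$ with its polygonal part exchanged for that of $h$. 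By the same Wong--Zakai analysis, now with the correction $-\tfrac12\rho$ arising from the combination of the It\^o integral $\int_{r}^{\cdot}\sigma(t,s,\tilde{X}^{n})\,dW_{s}$ and the Stieltjes integral $-\int_{r}^{\cdot}\sigma(t,s,\tilde{X}^{n})\,dW^{n}_{s}$, one obtains $\|\tilde{X}^{n}-x_{h}\|_{\alpha,r}\to 0$ in probability. Now $X$ and $\tilde{X}^{n}$ are the values $F_{n}(W-W^{n},W^{n})$ and $F_{n}(W-W^{n},h^{n})$ of a single measurable map $F_{n}(u,v)$, with $u$ a path and $v$ a dyadic polygon, and $v\mapsto F_{n}(W-W^{n},v)$ depends continuously on the finitely many interpolation points (again by the a priori estimates behind Lemma~\ref{Support Auxiliary Lemma}), so that $\Psi_{n}(v):=P(\|F_{n}(W-W^{n},v)-x_{h}\|_{\alpha,r}<\varepsilon)$ is lower semicontinuous on the finite-dimensional space of dyadic polygons. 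Choosing $n$ so large that $\Psi_{n}(h^{n})=P(\|\tilde{X}^{n}-x_{h}\|_{\alpha,r}<\varepsilon)>\tfrac12$, there is an open neighbourhood $B$ of $h^{n}$ on which $\Psi_{n}>\tfrac12$; since the dyadic increments of $W$ form a non-degenerate Gaussian vector, $h^{n}$ lies in the support of the law of $W^{n}$, and using the independence of $W^{n}$ and $W-W^{n}$ we conclude $P(\|X-x_{h}\|_{\alpha,r}<\varepsilon)=E[\Psi_{n}(W^{n})]\geq\tfrac12\,P(W^{n}\in B)>0$.

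The step I expect to be the main obstacle is the quantitative functional-It\^o (Wong--Zakai) estimate underlying both inclusions --- the convergences $\|X-x_{W^{n}}\|_{\alpha,r}\to 0$ and $\|\tilde{X}^{n}-x_{h}\|_{\alpha,r}\to 0$ in the \emph{delayed} $\alpha$-H\oe lder norm. Unlike in~\cite{ContKalininSupp}, the integrand $\sigma(t,s,\cdot)$ carries the extra time parameter $t$, so each cross-variation and chain-rule computation must be carried out after separating the diagonal coefficient $\sigma(s,s,\cdot)$ from the $\partial_{t}\sigma$-integrated remainder as in the decomposition above; one has to carry the polynomial-in-$\|x\|_{\infty}$ weights of~\eqref{C.2} in order to close the estimates in $L^{p}$, keep every bound uniform in the partition, and --- the delicate point --- confirm that the Volterra kernel structure yields exactly $\tfrac12\rho$ rather than a kernel-dependent variant. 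Transferring these estimates to the equation for $\tilde{X}^{n}$, which carries the random piecewise-constant extra drift $\sigma(t,s,\cdot)(\dot{h}^{n}-\dot{W}^{n})(s)$ and must be read in the filtration generated by the Brownian bridge increments, and proving the continuity of $F_{n}$ in its polygonal argument in the delayed H\oe lder norm, are the remaining technical ingredients.
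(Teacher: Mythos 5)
Your two-inclusion plan is structurally the same as the paper's (which follows the Millet--Sanz-Sol\'e scheme), and the first inclusion $\mathrm{supp}(P\circ X^{-1})\subseteq\mathcal{K}$ is established there exactly as you sketch: a quantitative Wong--Zakai estimate, which in the paper is packaged as Theorem~\ref{Main Convergence Theorem} applied with $\underline{B}=b-(1/2)\rho$, $B_{H}=0$, $\overline{B}=\sigma$, $\Sigma=0$, so that $\null_{n}Y=x_{\null_{n}W}$ and $Y=X$, and the Stratonovich correction $\tfrac12\rho$ emerges from~\eqref{General Remainder Definition} for exactly the reason you give. One detail you should fix: the paper does \emph{not} use the standard dyadic polygonal interpolation $W^{n}$ of $W$. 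It uses the \emph{delayed} interpolation $\null_{n}W=L_{n}(W)$ from~\eqref{Linear Operator L}, which on $(t_{i,n},t_{i+1,n}]$ replays the slope over the previous interval. The whole point of this choice is that $L_{n}$ is an adapted operator, so $\null_{n}W$ is $(\mathscr{F}_{t})$-adapted and the entire apparatus of Sections~3--4 (Burkholder--Davis--Gundy, Doob, the functional It\^o formula) can be run in the original filtration without enlargement. Your $W^{n}(t)$ on $(t_{i},t_{i+1})$ involves $W_{t_{i+1}}$ and is therefore anticipating; this is not fatal for the deterministic evaluation $x_{W^{n}(\omega)}$, but it would block the adapted $L^{2}$-moment machinery needed to prove the convergence you invoke.

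For the converse inclusion $\mathcal{K}\subseteq\mathrm{supp}(P\circ X^{-1})$ you take a genuinely different route, and here there is a real gap. The paper follows the Girsanov strategy announced at~\eqref{Hoelder Limit 2}: for each $h$ and $n$ it solves a running-value ODE to produce an $(\mathscr{F}_{t})$-\emph{adapted} process $\null_{h,n}W$, forms the exponential martingale $\null_{h,n}Z$, and passes to $P_{h,n}=\null_{h,n}Z_{T}\cdot P$, under which $\null_{h,n}W$ is a Brownian motion and $X$ solves a modified SVIE. Uniqueness in law then transfers the estimate of Theorem~\ref{Main Convergence Theorem} (this time with $\underline{B}=b$, $B_{H}=\sigma$, $\overline{B}=-\sigma$, $\Sigma=\sigma$, so $R=-\rho/2$ and $Y=x_{h}$) directly into $P_{h,n}(\|X-x_{h}\|_{\alpha,r}\geq\varepsilon)\to 0$, and the inclusion follows from the abstract support lemma. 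Everything stays adapted to $(\mathscr{F}_{t})$ throughout.

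Your proposal instead decomposes $W=(W-W^{n})+W^{n}$ into bridge and polygonal parts, replaces the polygonal part by $h^{n}$, and appeals to lower semicontinuity of $v\mapsto P(\|F_{n}(W-W^{n},v)-x_{h}\|_{\alpha,r}<\varepsilon)$ together with the full support of the Gaussian law of $W^{n}$. The unresolved difficulty is that $\tilde{X}^{n}$ is declared to be ``the solution of~\eqref{SVIE} with $W$ replaced by $(W-W^{n})+h^{n}$'', but $(W-W^{n})+h^{n}$ is not an $(\mathscr{F}_{t})$-semimartingale (it anticipates $W$ at the dyadic nodes), so the Volterra stochastic integral $\int_{r}^{t}\sigma(t,s,\tilde{X}^{n})\,d[(W-W^{n})+h^{n}]_{s}$ is not defined by the It\^o theory that Lemma~\ref{Support Auxiliary Lemma} and the paper's a priori estimates rest on. You would have to enlarge the filtration by the dyadic marginals of $W$, re-establish the semimartingale decomposition of the bridge part in the enlarged filtration, and re-derive all the moment bounds (Proposition~\ref{Sequence L-p Estimate Proposition}, Proposition~\ref{Remainder Proposition 1}, Proposition~\ref{Remainder Proposition 3}) and the continuity of $v\mapsto F_{n}(W-W^{n},v)$ there --- a substantial additional programme that you flag only in passing. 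The Girsanov route is chosen in the paper precisely to avoid it. Without that work, the claim that $\|\tilde{X}^{n}-x_{h}\|_{\alpha,r}\to0$ in probability, on which the entire semicontinuity argument hinges, is not justified.
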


\begin{Example}
Suppose that there are four product measurable maps $K_{b},K_{\sigma}:[r,T]^{2}\rightarrow\mathbb{R}$, $\overline{b}:[r,T]\times C([0,T],\mathbb{R}^{m})\rightarrow\mathbb{R}^{m}$ and $\overline{\sigma}:[r,T]\times C([0,T],\mathbb{R}^{m})\rightarrow\mathbb{R}^{m\times d}$ such that
\begin{equation*}
b(t,s,x) = K_{b}(t,s)\overline{b}(s,x)\quad\text{and}\quad \sigma(t,s,x) = K_{\sigma}(t,s)\overline{\sigma}(s,x)
\end{equation*}
for all $s,t\in [r,T]$ and any $x\in C([0,T],\mathbb{R}^{m})$ and let the following three conditions hold:
\begin{enumerate}[(1)]
\item The functions $K_{b}(\cdot,s)$ and $K_{\sigma}(\cdot,s)$ are differentiable for each $s\in [r,T)$. Further, $K_{b}$, $K_{\sigma}$, $\partial_{t}K_{b}$ and $\partial_{t}K_{\sigma}$ are bounded.

\item The map $\overline{\sigma}$ is of class $\mathbb{C}^{1,2}$ on $[r,T)\times C([0,T],\mathbb{R}^{m})$ and together with its vertical derivative $\partial_{x}\overline{\sigma}$ it is bounded and $d_{\infty}$-Lipschitz continuous.

\item There are $c,\eta,\lambda\geq 0$ and $\kappa\in [0,1)$ such that
\begin{align*}
|\overline{b}(s,x)|\leq c(1 + \|x\|_{\infty}^{\kappa}),\quad |\overline{b}(s,x) - \overline{b}(s,y)|&\leq \lambda\|x-y\|_{\infty},\\
|K_{\sigma}(u,t) - K_{\sigma}(u,s)| + |\partial_{u}K_{\sigma}(u,t) - \partial_{u}K_{\sigma}(u,s)|&\leq\lambda |s-t|^{1/2}\quad\text{and}\\
|\partial_{s}\overline{\sigma}(s,x)| + |\partial_{xx}\overline{\sigma}(s,x)|&\leq c(1 + \|x\|_{\infty}^{\eta})
\end{align*}
for all $s,t,u\in [r,T)$ with $s < t < u$ and each $x,y\in C([0,T],\mathbb{R}^{m})$.
\end{enumerate}
Then Theorem~\ref{Support Theorem} applies and in the specific case that $K_{b} = K_{\sigma} = 1$ it reduces to the support theorem in~\cite{ContKalininSupp} with the same regularity conditions.
\end{Example}

The structure of this paper is determined by the proof of the support theorem and can be comprised as follows. Section~\ref{Section 2} provides supplementary material and a H\oe lder convergence result that yields Theorem~\ref{Support Theorem} as a corollary. In detail, Section~\ref{Section 2.1} gives a concise overview of horizontal and vertical differentiability of non-anticipative functionals. Section~\ref{Section 2.2} relates the Volterra integral equation~\eqref{Support VIE} to an ordinary integro-differential equation and shows that solutions to~\eqref{SVIE} are semimartingales by using a stochastic Fubini theorem. In Section~\ref{Section 2.3} we consider the approach to prove the support theorem by introducing a more general setting and stating Theorem~\ref{Main Convergence Theorem}, the before mentioned convergence result.

Section~\ref{Section 3} derives relevant estimates to infer convergence in H\oe lder norm in moment. To be precise, Section~\ref{Section 3.1} gives a sufficient condition for a sequence of processes to converge in this sense by exploiting an explicit Kolmogorov-Chentsov estimate. In Section~\ref{Section 3.2} we introduce the relevant notations in the context of sequence of partitions and recall a couple of auxiliary moment estimates from~\cite{ContKalininSupp,MaoSDEandApp}. The purpose of Section~\ref{Section 3.3} is to deduce moment estimates for deterministic and stochastic Volterra integrals, generalizing the bounds from~\cite{ContKalininSupp}[Lemmas 20, 21 and Proposition 22].

Section~\ref{Section 4} is devoted to a variety of specific moment estimates and decompositions, preparing the proof of Theorem~\ref{Main Convergence Theorem}. At first, Section~\ref{Section 4.1} derives bounds for solutions to stochastic Volterra integral equations and gives two main decompositions, Proposition~\ref{Main Decomposition Proposition} and~\eqref{Main Remainder Decomposition}. Section~\ref{Section 4.2} handles the first two remainders appearing in~\eqref{Main Remainder Decomposition}. While the second can be directly estimated, the first relies on the functional It{\^o} formula in~\cite{ContFournieFuncIto}. Section~\ref{Section 4.3} intends to bound the third remainder in second moment, requiring another extensive decomposition. In Section~\ref{Section 5} we prove the convergence result and the support representation, including assertions on uniqueness, existence and regularity. 

\section{Preparations and a convergence result in second moment}\label{Section 2}

\subsection{Differential calculus for non-anticipative functionals}\label{Section 2.1}

We recall and discuss horizontal and vertical differentiability, as introduced in~\cite{ContFournieFunc, Dupire}. To this end, let $t\in (r,T]$ and $G$ be a non-anticipative functional on $[r,t)\times D([0,T],\mathbb{R}^{m})$ that is considered at a point $(s,x)$ of its domain:
\begin{enumerate}[(i)]
\item $G$ is \emph{horizontally differentiable} at $(s,x)$ if the function $[0,T-s)\rightarrow\mathbb{R}$, $h\mapsto G(s + h,x^{s})$ is differentiable at $0$. If this is the case, then $\partial_{s}G(s,x)$ denotes its derivative there.

\item $G$ is \emph{vertically differentiable} at $(s,x)$ if the function $\mathbb{R}^{m}\rightarrow\mathbb{R}$, $h\mapsto G(s,x+h\mathbbm{1}_{[s,T]})$ is differentiable at $0$. In this case, its derivative there is denoted by $\partial_{x}G(s,x)$.

\item $G$ is \emph{partially vertically differentiable} at $(s,x)$ if for any $k\in\{1,\dots,m\}$ the function $\mathbb{R}\rightarrow\mathbb{R}$, $h\mapsto G(s,x+h\overline{e}_{k}\mathbbm{1}_{[s,T]})$ is differentiable at $0$, where $\{\overline{e}_{1},\dots,\overline{e}_{m}\}$ is the standard basis of $\mathbb{R}^{m}$. In this event, $\partial_{x_{k}}G(s,x)$ represents its derivative there.
\end{enumerate}

So, $G$ is horizontally, vertically or partially vertically differentiable if it satisfies the respective property at any point of its domain. We observe that vertical differentiability entails partial vertical differentiability and $\partial_{x}G = (\partial_{x_{1}}G,\dots,\partial_{x_{m}}G)$.

We say that $G$ is twice vertically differentiable if it is vertically differentiable and the same is true for $\partial_{x}G$. We then set $\partial_{xx}G:=\partial_{x}(\partial_{x}G)$ and $\partial_{x_{k}x_{l}}G:=\partial_{x_{k}}(\partial_{x_{l}}G)$ for any $k,l\in\{1,\dots,m\}$. If in addition $\partial_{xx}G$ is $d_{\infty}$-continuous, then
\[
\partial_{x_{k}x_{l}}G = \partial_{x_{l}x_{k}}G\quad\text{for each $k,l\in\{1,\dots,m\}$,}
\]
by Schwarz's Lemma, showing that $\partial_{xx}G$ is symmetric. Moreover, we call $G$ \emph{of class $\mathbb{C}^{1,2}$} if it is once horizontally and twice vertically differentiable such that $G$, $\partial_{s}G$, $\partial_{x}G$ and $\partial_{xx}G$ are bounded on bounded sets and $d_{\infty}$-continuous.

Clearly, horizontal differentiability applies to functionals on $[r,t)\times C([0,T],\mathbb{R}^{m})$ as well by considering continuous paths only. Vertical differentiability, however, requires the evaluation along \cadlag paths. So, a functional $F$ on $[r,t)\times C([0,T],\mathbb{R}^{m})$ is of \emph{class $\mathbb{C}^{1,2}$} if it possesses an non-anticipative extension $G:[r,t)\times D([0,T],\mathbb{R}^{m})\rightarrow\mathbb{R}$ that satisfies this property. Then the restricted derivatives
\[
\partial_{x}F:=\partial_{x}G\quad\text{and}\quad \partial_{xx}F:=\partial_{xx}G\quad\text{on $[r,t)\times C([0,T],\mathbb{R}^{m})$}
\]
are well-defined, by Theorems 5.4.1 and 5.4.2 in~\cite{ContFuncKol}. That is, they do not dependent on the choice of the extension $G$. By combining these considerations with an absolute continuity condition, which ensures that only semimartingales appear, we can use the functional It{\^o} formula from~\cite{ContFournieFuncIto} to prove Proposition~\ref{Remainder Proposition 1}, a key ingredient when deriving~\eqref{Support Characterization}.

\begin{Examples}
(i) We suppose that $\alpha\in (0,1]$, $k\in\mathbb{N}$ and $\varphi:[r,t)\times (\mathbb{R}^{m})^{k}\rightarrow\mathbb{R}^{d}$, $(s,x)\mapsto\varphi(s,\overline{x}_{1},\dots,\overline{x}_{m})$ is $\alpha$-H\oe lder continuous. Let $t_{0},\dots,t_{k}\in [r,t)$ satisfy $t_{0} < \cdots < t_{k}$, then the $\mathbb{R}^{d}$-valued non-anticipative map $G$ on $[r,t)\times D([0,T],\mathbb{R}^{m})$ given by
\[
G(s,x) := \varphi(s,x(t_{0}\wedge s),\dots,x(t_{k}\wedge s))
\]
is bounded on bounded sets and $\alpha$-H\oe lder continuous with respect to $d_{\infty}$. Furthermore, if $\varphi$ is of class $C^{1,2}$ in the usual sense, then $G$ is of class $\mathbb{C}^{1,2}$, because it satisfies $\partial_{s}G(s,x)$ $= (\partial_{+} \varphi/\partial s)(s,x(t_{0}\wedge s),\dots, x(t_{k}\wedge s))$ and
\[
\partial_{x}G(s,x) = \sum_{j=0,\, s\leq t_{j}}^{k} D_{\overline{x}_{j}}\varphi(s,x(t_{0}\wedge s),\dots, x(t_{k}\wedge s))
\]
for any $s\in [r,t)$ and every $x\in D([0,T],\mathbb{R}^{m})$, where $\partial_{+}\varphi/\partial s$ denotes the right-hand time derivative of $\varphi$ and $D_{\overline{x}_{j}}\varphi$ the partial derivative of $\varphi$ with respect to the $j$-th space variable $\overline{x}_{j}\in\mathbb{R}^{m}$ for each $j\in\{1,\dots,k\}$.\smallskip

\noindent
(ii) Let $\alpha\in (0,1]$, $K:[0,t)\rightarrow\mathbb{R}$ be continuously differentiable and $\varphi$ be an $\mathbb{R}^{m\times d}$-valued Borel measurable bounded map on $[0,t)\times D([0,T],\mathbb{R}^{m})$ that is $\alpha$-H\oe lder continuous in $x\in D([0,T],\mathbb{R}^{m})$, uniformly in $s\in [0,t)$. Then the \emph{non-anticipative kernel integral map} $G:[r,t)\times D([0,T],\mathbb{R}^{d})\rightarrow\mathbb{R}^{m\times d}$ defined by
\[
G(s,x):=\int_{0}^{s}K(s-u)\varphi(u,x^{u})\,du
\]
is bounded and $\alpha$-H\oe lder continuous relative to $d_{\infty}$. In addition, if $\varphi$ is $d_{\infty}$-continuous, then $G$ is of class $\mathbb{C}^{1,2}$, since $\partial_{s}G(s,x) = K(0)\varphi(s,x) + \int_{0}^{s}\dot{K}(s-u)\varphi(u,x)\,du$ for each $s\in [r,t)$ and any $x\in D([0,T],\mathbb{R}^{m})$ and $\partial_{x}G = 0$.
\end{Examples}

\subsection{Ordinary integro-differential equations and semimartingales}\label{Section 2.2}

By utilizing an absolute continuity condition, we directly connect the Volterra integral equation~\eqref{Support VIE} to an ordinary integro-differential equation and check that any solution to~\eqref{SVIE} solves a stochastic differential equation, ensuring that it is a semimartingale.

Let us first briefly analyze~\eqref{Support VIE} for $h\in W_{r}^{1,1}([0,T],\mathbb{R}^{d})$, under the hypothesis that $\sigma(t,\cdot,\cdot)$ is of class $\mathbb{C}^{1,2}$ on $[r,t)\times C([0,T],\mathbb{R}^{m})$ for each $t\in (r,T]$. A \emph{solution} to~\eqref{Support VIE} is a path $x\in C([0,T],\mathbb{R}^{m})$ such that
\begin{align*}
&\int_{r}^{t}|(b-(1/2)\rho)(t,s,x)| + |\sigma(t,s,x)||\dot{h}(s)|\,ds\quad\text{and}\\
& x(t) = x(r) + \int_{r}^{t}(b-(1/2)\rho)(t,s,x)\,ds + \int_{r}^{t}\sigma(t,s,x)\,dh(s)
\end{align*}
for any $t\in [r,T]$, since the variation of $h$ on $[r,s]$ is given by $\int_{r}^{s}|\dot{h}(u)|\,du$ for all $s\in [r,t]$. If we now assume that~\eqref{C.1}-\eqref{C.3} are valid, then the $d_{\infty}$-Lipschitz continuity of the map $[r,t)\times C([0,T],\mathbb{R}^{m})\rightarrow\mathbb{R}^{1\times m}$, $(s,x)\mapsto \partial_{x}\sigma_{k,l}(t,s,x)$ entails that it admits a unique continuous extension to $[r,t]\times C([0,T],\mathbb{R}^{m})$ for any $t\in (r,T]$, each $k\in\{1,\dots,m\}$ and every $l\in\{1,\dots,d\}$.

In this case, we may define $\overline{\rho}:[r,T]^{2}\times C([0,T],\mathbb{R}^{m})\rightarrow\mathbb{R}^{m}$ coordinatewise by letting $\overline{\rho}_{k}(t,s,x)$ agree with the right-hand side in~\eqref{Correction Term}, if $s\leq t$, and setting $\overline{\rho}(t,s,x):=0$, otherwise. Then Fubini's theorem entails for each $x\in C([0,T],\mathbb{R}^{m})$ that
\begin{align}\nonumber
&\int_{r}^{t}(b - (1/2)\rho)(t,s,x)\,ds + \int_{r}^{t}\sigma(t,s,x)\,dh(s)\\\label{Mild Solution Definition}
& = \int_{r}^{t}(b-(1/2)\overline{\rho} + \sigma\dot{h})(s,s,x) + \int_{r}^{s}\partial_{s}(b-(1/2)\overline{\rho} + \sigma\dot{h})(s,u,x)\,du\,ds
\end{align}
for every $t\in [r,T]$. Consequently, the path $x$ solves~\eqref{Support VIE} if and only if it is a \emph{mild solution} to the path-dependent ordinary integro-differential equation
\begin{equation*}
\dot{x}(t) = (b-(1/2)\overline{\rho} + \sigma\dot{h})(t,t,x) + \int_{r}^{t}\partial_{t}(b - (1/2)\overline{\rho} + \sigma\dot{h})(t,s,x)\,ds
\end{equation*}
for $t\in [r,T]$. Since all appearing maps are integrable, this means that the increment $x(t)-x(r)$ agrees with~\eqref{Mild Solution Definition} for any $t\in [r,T]$. Let us now turn to the stochastic Volterra integral equation~\eqref{SVIE}, without imposing any conditions for the moment.

Thus, we let $\mathscr{C}([0,T],\mathbb{R}^{m})$ denote the completely metrizable topological space of all $(\mathscr{F}_{t})_{t\in [0,T]}$-adapted continuous processes $X:[0,T]\times\Omega\rightarrow\mathbb{R}^{m}$ and recall that a \emph{solution} to~\eqref{SVIE} is a process $X\in\mathscr{C}([0,T],\mathbb{R}^{m})$ such that
\begin{align*}
&\int_{r}^{t}|b(t,s,X)| + |\sigma(t,s,X)|^{2}\,ds < \infty\quad\text{a.s.~and}\\
&X_{t} = X_{r} + \int_{r}^{t}b(t,s,X)\,ds + \int_{r}^{t}\sigma(t,s,X)\,dW_{s}\quad\text{a.s.~for all $t\in [r,T]$.}
\end{align*}
For a process $\xi\in\mathscr{C}([0,T],\mathbb{R}^{m})$ that is independent of $W$ we let $(\mathscr{E}_{t}^{0})_{t\in [0,T]}$ be the natural filtration of the adapted continuous process $[0,T]\times\Omega\rightarrow\mathbb{R}^{2m}$, $(t,\omega)\mapsto (\xi_{t}^{r},W_{r\vee t} - W_{r})(\omega)$. That is, $\mathscr{E}_{t}^{0}=\sigma(\xi_{q}:q\in [0,t])$ for $t\in [0,r]$ and
\[
\mathscr{E}_{t}^{0}:=\mathscr{E}_{r}^{0}\vee\sigma(W_{s}-W_{r}:s\in [r,t])\quad\text{for $t\in (r,T]$.}
\]
In particular, $\mathscr{E}_{t}^{0} = \sigma(\xi_{0})\vee\sigma(W_{s}:s\in [0,t])$ for all $t\in [0,T]$ if there is no delay. Let $(\mathscr{E}_{t})_{t\in [0,T]}$ be the right-continuous filtration of the augmented filtration of $(\mathscr{E}_{t}^{0})_{t\in [0,T]}$. Then a solution $X$ to~\eqref{SVIE} satisfying $X^{r} = \xi^{r}$ a.s.~is called \emph{strong} if it is adapted to this complete filtration.

Finally, suppose that~\eqref{C.1} and~\eqref{C.2} hold. Then it follows from Fubini's theorem for stochastic integrals, stated in~\cite{VeraarFubini}[Theorem 2.2] for instance, that any $X\in\mathscr{C}([0,T],\mathbb{R}^{m})$ satisfies
\begin{equation*}
\int_{r}^{t}b(t,s,X)\,ds + \int_{r}^{t}\sigma(t,s,X)\,dW_{s} = \int_{r}^{t}B_{s}(X)\,ds + \int_{r}^{t}\sigma(s,s,X)\,dW_{s}
\end{equation*}
a.s.~for any $t\in [r,T]$, where the map $B:[r,T]\times\Omega\times\mathscr{C}([0,T],\mathbb{R}^{m})\rightarrow\mathbb{R}^{m}$, which is product measurable and depends on whole processes rather than trajectories, is given by
\[
B_{s}(Y)= b(s,s,Y) + \int_{r}^{s}\partial_{s}b(s,u,Y)\,du + \int_{r}^{s}\partial_{s}\sigma(s,u,Y)\,dW_{u}
\]
for every $s\in [r,T]$ a.s. This shows that $X$ solves~\eqref{SVIE} if and only if it is a solution to the path-dependent stochastic differential equation
\[
X_{t} = B_{t}(X)\,dt + \sigma(t,t,X)\,dW_{t}\quad\text{for $t\in [r,T]$.}
\]
Moreover, it is automatically a semimartingale in this case.

\subsection{Approach to the main result in a general setting}\label{Section 2.3}

After these preliminary considerations, we proceed as follows to establish the support theorem. For any $n\in\mathbb{N}$ let $\mathbb{T}_{n}$ be a partition of $[r,T]$ of the form $\mathbb{T}_{n}=\{t_{0,n},\dots,t_{k_{n},n}\}$ with $k_{n}\in\mathbb{N}$ and $t_{0,n},\dots,t_{k_{n},n}\in [r,T]$ such that $r= t_{0,n} < \cdots < t_{k_{n},n} = T$ and whose mesh $\max_{i\in\{0,\dots,k_{n}-1\}}(t_{i+1,n} - t_{i,n})$ is denoted by $|\mathbb{T}_{n}|$. We assume that the sequence $(\mathbb{T}_{n})_{n\in\mathbb{N}}$ of partitions is \emph{balanced} as defined in~\cite{ContDasQuad}, which means that there is $c_{\mathbb{T}}\geq 1$ such that
\begin{equation}\label{Partition Condition}
|\mathbb{T}_{n}| \leq c_{\mathbb{T}} \min_{i\in\{0,\dots,k_{n}-1\}} (t_{i,n} - t_{i-1,n})\quad\text{for all $n\in\mathbb{N}$.}
\end{equation}
For the estimation of one term in Proposition~\ref{Remainder Proposition 1}, when the functional It{\^o} formula is applied, we also require the following additional condition:
\begin{enumerate}[label=(C.\arabic*), ref= C.\arabic*, leftmargin=\widthof{(C.4)} + \labelsep]
\setcounter{enumi}{3}
\item\label{C.4} There is $\overline{c}_{\mathbb{T}} > 0$ such that $k_{n} |\mathbb{T}_{n}| \leq \overline{c}_{\mathbb{T}}$ for each $n\in\mathbb{N}$.
\end{enumerate}
However, unless explicitly stated, we shall not impose this condition. Moreover, we readily notice that any equidistant sequence of partitions satisfies both conditions.

Next, for any $k,n\in\mathbb{N}$ we are interested in the delayed linear interpolation of a map $x:[0,T]\rightarrow\mathbb{R}^{k}$ along $\mathbb{T}_{n}$. Namely, we define $L_{n}(x):[0,T]\rightarrow\mathbb{R}^{k}$ by $L_{n}(x)(t):=x(r\wedge t)$, if $t\leq t_{1,n}$, and
\begin{equation}\label{Linear Operator L}
L_{n}(x)(t) := x(t_{i-1,n}) + \frac{t-t_{i,n}}{t_{i+1,n} - t_{i,n}}(x(t_{i,n}) - x(t_{i-1,n})),
\end{equation}
if $t\in (t_{i,n},t_{i+1,n}]$ for some $i\in\{1,\dots,k_{n}-1\}$. Since $L_{n}(x)$ is piecewise continuously differentiable, it belongs to $W_{r}^{1,p}([0,T],\mathbb{R}^{k})$ for every $p\geq 1$, and by construction, the process $\null_{n}W:[0,T]\times\Omega\rightarrow\mathbb{R}^{d}$ defined via $\null_{n}W_{t} :=L_{n}(W)(t)$ is adapted.

Let us now assume that~\eqref{C.1}-\eqref{C.3} and Lemma~\ref{Support Auxiliary Lemma} hold. Then the support of $P\circ X^{-1}$ is included in the closure of $\{x_{h}\,|\,h\in W_{r}^{1,p}([0,T],\mathbb{R}^{d})\}$ in $C_{r}^{\alpha}([0,T],\mathbb{R}^{m})$ for $\alpha\in [0,1/2)$ and $p\geq 2$ if we can prove that
\begin{equation}\label{Hoelder Limit 1}
\lim_{n\uparrow\infty} P(\|x_{\null_{n}W} - X\|_{\alpha,r}\geq\varepsilon) = 0\quad\text{for any $\varepsilon > 0$.}
\end{equation}
Moreover, if for each $h\in W_{r}^{1,p}([0,T],\mathbb{R}^{d})$ there exists a sequence $(P_{h,n})_{n\in\mathbb{N}}$ of probability measures on $(\Omega,\mathscr{F})$ that are absolutely continuous to $P$ such that
\begin{equation}\label{Hoelder Limit 2}
\lim_{n\uparrow\infty} P_{h,n}(\|X - x_{h}\|_{\alpha,r}\geq\varepsilon) = 0\quad\text{for every $\varepsilon > 0$,}
\end{equation}
then the converse inclusion holds. The sufficiency of~\eqref{Hoelder Limit 1} and~\eqref{Hoelder Limit 2} follows from a basic result on the support of probabiilty measures, see \cite{ContKalininSupp}[Lemma 36] for example. To verify the validity of both limits, we consider a more general setting.

Let $\underline{B}$ be an $\mathbb{R}^{m}$-valued and $B_{H},\overline{B}$ and $\Sigma$ be $\mathbb{R}^{m\times d}$-valued non-anticipative product measurable maps on $[r,T]^{2}\times C([0,T],\mathbb{R}^{m})$. For any $n\in\mathbb{N}$ we study the path-dependent stochastic Volterra integral equation:
\begin{equation}
\begin{split}\label{Sequence VIE}
\null_{n}Y_{t} &= \null_{n}Y_{r} + \int_{r}^{t}\underline{B}(t,s,\null_{n}Y) + B_{H}(t,s,\null_{n}Y)\dot{h}(s) + \overline{B}(t,s,\null_{n}Y)\null_{n}\dot{W}_{s}\,ds\\
&\quad + \int_{r}^{t}\Sigma(t,s,\null_{n}Y)\,dW_{s}\quad\text{a.s.~for $t\in [r,T]$.}
\end{split}
\end{equation}
Provided that the map $[r,t)\times C([0,T],\mathbb{R}^{m})\rightarrow\mathbb{R}^{m\times d}$, $(s,x)\mapsto\overline{B}(t,s,x)$ is of class $\mathbb{C}^{1,2}$ for all $t\in (r,T]$, we introduce another path-dependent stochastic Volterra integral equation:
\begin{equation}
\begin{split}\label{General VIE}
Y_{t} &= Y_{r} + \int_{r}^{t}(\underline{B}+R)(t,s,Y) + B_{H}(t,s,Y)\dot{h}(s)\,ds\\
&\quad + \int_{r}^{t}(\overline{B} + \Sigma)(t,s,Y)\,dW_{s}\quad\text{a.s.~for $t\in [r,T]$}
\end{split}
\end{equation}
with the $\mathbb{R}^{m}$-valued non-anticipative product measurable map $R$ on $[r,T]^{2}\times C([0,T],\mathbb{R}^{m})$ given coordinatewise by
\begin{equation}\label{General Remainder Definition}
R_{k}(t,s,x)=\sum_{l=1}^{d}\partial_{x}\overline{B}_{k,l}(t,s,x)\big((1/2)\overline{B} + \Sigma)(s,s,x)e_{l},
\end{equation}
if $s < t$, and $R_{k}(t,s,x):=0$, otherwise. In particular,~\eqref{Sequence VIE} reduces to~\eqref{General VIE} in the case that $\overline{B} = 0$. We seek to show that if $\null_{n}Y$ and $Y$ are two continuous solutions to~\eqref{Sequence VIE} and~\eqref{General VIE}, respectively, satisfying $\null_{n}Y^{r} = Y^{r} = \hat{x}^{r}$ a.s.~for all $n\in\mathbb{N}$, then
\begin{equation}\label{General Hoelder Limit}
\lim_{n\uparrow\infty} E[\|\null_{n}Y - Y\|_{\alpha,r}^{2}] = 0.
\end{equation}
Thus, by choosing $\underline{B} = b - (1/2)\rho$, $B_{H} = 0$, $\overline{B} = \sigma$ and $\Sigma = 0$, we obtain~\eqref{Hoelder Limit 1}. If instead $\underline{B} = b$, $B_{H}=\sigma$, $\overline{B} = - \sigma$ and $\Sigma=\sigma$, then~\eqref{Hoelder Limit 2} is implied, as we will see. To derive the general convergence result~\eqref{General Hoelder Limit}, we introduce the following regularity conditions:
\begin{enumerate}[label=(C.\arabic*), ref= C.\arabic*, leftmargin=\widthof{(C.9)} + \labelsep]
\setcounter{enumi}{4}
\item\label{C.5} The map $[r,t)\times C([0,T],\mathbb{R}^{m})\rightarrow\mathbb{R}^{m\times d}$, $(s,x)\mapsto\overline{B}(t,s,x)$ is of class $\mathbb{C}^{1,2}$ for all $t\in (r,T]$, for any $F\in\{\underline{B},B_{H},\overline{B},\Sigma\}$ the map $F(\cdot,s,x)$ is absolutely continuous on $[s,T]$ and $\partial_{x}\overline{B}$ is absolutely continuous on $(s,T]$ for each $s\in [r,T)$ and any $x\in C([0,T],\mathbb{R}^{m})$.

\item\label{C.6} There are $c\geq 0$ and $\kappa\in [0,1)$ such that any two maps $F\in\{\underline{B},B_{H}\}$ and $G\in\{\overline{B},\Sigma\}$ satisfy $|F(s,s,x)| + |\partial_{t}F(t,s,x)|\leq c(1 + \|x\|_{\infty}^{\kappa})$ and
\begin{equation*}
|G(s,s,x)| + |\partial_{t}G(t,s,x)|\leq c
\end{equation*}
for each $s,t\in [r,T)$ with $s < t$ and every $x\in C([0,T],\mathbb{R}^{m})$.

\item\label{C.7} There exists $\lambda\geq 0$ such that $|\underline{B}(s,s,x) - \underline{B}(s,s,y)| + |\partial_{t}\underline{B}(t,s,x) - \partial_{t}\underline{B}(t,s,y)|$ $\leq \lambda\|x-y\|_{\infty}$ and for any $F\in\{B_{H},\overline{B},\Sigma\}$ it holds that
\begin{align*}
|F(u,t,x) - F(u,s,y)| + |\partial_{u}F(u,t,x) - \partial_{u}F(u,s,y)|&\leq \lambda d_{\infty}((t,x),(s,y))
\end{align*}
for each $s,t,u\in [r,T)$ with $s < t < u$ and every $x,y\in C([0,T],\mathbb{R}^{m})$.

\item\label{C.8} There are $\overline{c},\eta,\overline{\lambda}\geq 0$ such that $\|\partial_{x}\overline{B}(s,s,x)\| + \|\partial_{t}\partial_{x}\overline{B}(t,s,x)\|\leq\overline{c}$, $|\partial_{s}\overline{B}(t,s,x)|$ $ +\, \|\partial_{xx}\overline{B}(t,s,x)\|\leq\overline{c}\big(1 + \|x\|_{\infty}^{\eta}\big)$ and
\begin{align*}
\|\partial_{x}\overline{B}(u,t,x) - \partial_{x}\overline{B}(u,s,y)\|&\leq \overline{\lambda}d_{\infty}((t,x),(s,y))
\end{align*}
for any $s,t,u\in [r,T)$ with $s < t < u$ and each $x\in C([0,T],\mathbb{R}^{m})$.

\item\label{C.9} There exist $\overline{b}_{0}\in\mathbb{R}$ and a measurable function $\overline{b}:[r,T]\rightarrow\mathbb{R}$ such that $\int_{r}^{T}\overline{b}_{1}(s)^{2}\,ds $ $< \infty$ and $\overline{b}_{0}\overline{B}(t,s,x)$ $= \overline{b}(s)\Sigma(t,s,x)$ for every $s,t\in [r,T)$ with $s < t$ and each $x\in C([0,T],\mathbb{R}^{m})$.
\end{enumerate}

First, we question uniqueness, existence and regularity of solutions to~\eqref{Sequence VIE} and~\eqref{General VIE}. In this regard, let $\xi\in\mathscr{C}([0,T],\mathbb{R}^{m})$ and $(\null_{n}\xi)_{n\in\mathbb{N}}$ be a sequence in $\mathscr{C}([0,T],\mathbb{R}^{m})$.

\begin{Lemma}\label{Auxiliary Convergence Lemma}
Assume that~\eqref{C.5}-\eqref{C.7} are satisfied, $h\in W_{r}^{1,2}([0,T],\mathbb{R}^{d})$ and for each $n\in\mathbb{N}$ there is is $p > 2$ such that $E[\|\xi^{r}\|_{\infty}^{p} + \|\null_{n}\xi^{r}\|_{\infty}^{p}] < \infty$.
\begin{enumerate}[(i)]
\item Under~\eqref{C.9}, pathwise uniqueness holds for~\eqref{Sequence VIE} and there exists a unique strong solution $\null_{n}Y$ with $\null_{n}Y^{r} = \null_{n}\xi^{r}$ a.s.~for any $n\in\mathbb{N}$. Further, for each $p > 2$ and every $\alpha\in [0,1/2-1/p)$, there is $c_{\alpha,p} > 0$ such that
\[
E[\|\null_{n}Y\|_{\alpha,r}^{p}] \leq c_{\alpha,p}(1 + E[\|\null_{n}\xi^{r}\|_{\infty}^{p}])\quad\text{for all $n\in\mathbb{N}$.}
\]

\item If~\eqref{C.8} holds, then we have pathwise uniqueness for~\eqref{General VIE} and a unique strong solution $Y$ with $Y^{r} = \xi^{r}$ a.s.~ In this case, for each $p > 2$ and all $\alpha\in [0,1/2-1/p)$ there is $\overline{c}_{\alpha,p}> 0$ with $E[\|Y\|_{\alpha,r}^{p}]$ $\leq \overline{c}_{\alpha,p}(1 + E[\|\xi^{r}\|_{\infty}^{p}])$.
\end{enumerate}
\end{Lemma}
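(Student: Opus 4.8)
The plan is to prove both parts by setting up a Picard/fixed-point iteration after first reducing the Volterra integral equations to path-dependent stochastic differential equations, exactly as done in Section~\ref{Section 2.2} for~\eqref{SVIE}. Under~\eqref{C.5} and the absolute continuity hypotheses, the stochastic Fubini theorem of~\cite{VeraarFubini} rewrites the $ds$- and $dW_{s}$-integrals in~\eqref{Sequence VIE} and~\eqref{General VIE} so that the driving coefficients become evaluated on the diagonal $s=s$ plus an integrated horizontal-derivative term; concretely $\null_{n}Y$ solves an SDE of the form $d\null_{n}Y_{t} = \mathfrak{B}_{t}(\null_{n}Y)\,dt + \Sigma(t,t,\null_{n}Y)\,dW_{t}$ with a functional drift $\mathfrak{B}$ collecting $\underline{B}(t,t,\cdot)$, $B_{H}(t,t,\cdot)\dot h(t)$, $\overline{B}(t,t,\cdot)\,\null_{n}\dot W_{t}$ and the stochastic integrals $\int_{r}^{t}\partial_{t}(\cdots)\,du$ and $\int_{r}^{t}\partial_{t}\overline{B}(t,u,\cdot)\,dW_{u}$, and similarly for $Y$ with $\overline{B}+\Sigma$ on the diagonal and the correction term $R$ absorbed into the drift. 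This is the same device that makes solutions semimartingales, so I would simply invoke it.

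Next I would establish existence and pathwise uniqueness. The linear-growth bounds~\eqref{C.6},~\eqref{C.8} and the Lipschitz bounds~\eqref{C.7},~\eqref{C.8} translate, via the Fubini rewriting, into linear-growth and (local-in-time) Lipschitz estimates for the functional SDE coefficients $\mathfrak{B}$ and the diffusion, where the $\kappa<1$ sublinear dependence on $\|x\|_\infty$ of the $\underline{B}$/$B_{H}$ parts and the boundedness of $\overline{B},\Sigma$ on the diagonal are what keep everything under control; crucially $\dot h\in L^2$ and $\null_n\dot W = L_n(W)$' is piecewise constant (hence in every $L^p$) so the terms $B_H\dot h$ and $\overline B\,{}_n\dot W$ are integrable. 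A standard Picard iteration on $C([r,T],L^p(\Omega))$ with a Gronwall argument (or the Bihari-type argument needed to handle the $\kappa$-sublinear term) then yields a unique strong solution; adaptedness to $(\mathscr E_t)$ follows because the iteration only uses $\xi$, $\null_n\xi$ and $W$. For~\eqref{Sequence VIE} one additionally needs~\eqref{C.9}: the relation $\overline b_0\overline B=\overline b\,\Sigma$ lets one express the combination $\overline B(t,t,\cdot)\,\null_n\dot W_t\,dt + \Sigma(t,t,\cdot)\,dW_t$ (the genuinely singular-looking pair) as a stochastic integral against $W$ under a Girsanov-shifted measure, so that the ${}_n\dot W$ term does not destroy the $L^p$-estimates uniformly in $n$; this is the one place where $\int_r^T \overline b(s)^2\,ds<\infty$ is used. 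I expect this uniform-in-$n$ control of the $\overline B\,\null_n\dot W$ term to be the main obstacle, everything else being a routine SDE argument.

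Finally, to get the H\oe lder-moment bounds, I would first prove the $L^p$-supremum bounds $E[\sup_{t\in[r,T]}|\null_nY_t|^p]\le c_p(1+E[\|\null_n\xi^r\|_\infty^p])$ uniformly in $n$ from the SDE form via Burkholder-Davis-Gundy and Gronwall, again using~\eqref{C.9} to absorb the ${}_n\dot W$ term. Then, for $r\le s<t\le T$, I would estimate $E[|\null_nY_t-\null_nY_s|^p]$ by splitting the increment into its drift and martingale parts: the diagonal drift and diffusion terms contribute $|t-s|^{p}$ and $|t-s|^{p/2}$ respectively by BDG and the now-established supremum bounds, the $\dot h$ term contributes $|t-s|^{p/2}(\int_s^t|\dot h|^2)^{p/2}\le C|t-s|^{p/2}$ by Cauchy-Schwarz since $h\in W^{1,2}_r$, and the ${}_n\dot W$ term is, after the Girsanov rewriting of~\eqref{C.9}, a martingale increment again bounded by $C|t-s|^{p/2}$; the extra $\int_r^\cdot\partial_t\overline B\,dW$ and $\int_r^\cdot\partial_t(\cdots)\,du$ pieces are handled the same way using the boundedness of $\partial_t\overline B$ and the $\kappa$-growth of $\partial_t\underline B,\partial_t B_H$. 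This gives $E[|\null_nY_t-\null_nY_s|^p]\le c_p(1+E[\|\null_n\xi^r\|_\infty^p])|t-s|^{p/2}$, whence the Kolmogorov--Chentsov estimate (in the explicit form to be recalled in Section~\ref{Section 3.1}) yields the claimed bound on $E[\|\null_nY\|_{\alpha,r}^p]$ for every $\alpha<1/2-1/p$, with $c_{\alpha,p}$ independent of $n$. Part~(ii) for $Y$ is identical but simpler: no ${}_n\dot W$ term appears and~\eqref{C.9} is not needed, so~\eqref{C.8} alone (controlling $R$ through $\partial_x\overline B$ and its derivatives) suffices for the growth and Lipschitz estimates.
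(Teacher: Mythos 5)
Your route for existence and pathwise uniqueness is sound and matches the paper in spirit: reduce the Volterra equation to a path-dependent SDE via the stochastic Fubini theorem, and when $\overline{b}_0\neq 0$ use~\eqref{C.9} and Girsanov to absorb the combination $\overline{B}(t,s,\cdot)\,\null_n\dot{W}_s\,ds + \Sigma(t,s,\cdot)\,dW_s$ into $\Sigma(t,s,\cdot)\,d\null_n\overline{W}_s$ for the $\overline{P}_n$-Brownian motion $\null_n\overline{W}_t = W_t + \int_r^{r\vee t}\overline{b}(s)\,d\null_nW_s$, reducing~\eqref{Sequence VIE} to a standard SVIE with drift $\underline{B}+B_H\dot h$ and diffusion $\Sigma$. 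The paper then cites~\cite{KalininVolterra,ProtterVolterra}; your Picard/Bihari route would also work under $\overline{P}_n$. Part~(ii) is indeed the special case $\overline{B}\equiv 0$, as you note.

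The genuine gap is in the uniform-in-$n$ moment bound. You propose to bound $E[\sup_t|\null_nY_t|^p]$ and the increment moments ``after the Girsanov rewriting of~\eqref{C.9}'', i.e.\ treating $\int\Sigma\,d\null_n\overline{W}$ as a martingale and invoking BDG. But that gives a bound under $\overline{P}_n$, whereas the lemma asserts the bound under $P$. The transfer fails: $(\null_n\overline{Z}_T)^{-1}=dP/d\overline{P}_n$ is an exponential whose $L^q(\overline{P}_n)$-norms involve $\exp(\tfrac{q^2-q}{2}\int_r^T|\overline{b}(s)\null_n\dot W_s|^2\,ds)$, and $\int_r^T|\null_n\dot W_s|^2\,ds\sim |\mathbb{T}_n|^{-1}$, so these blow up as $n\to\infty$. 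In fact~\eqref{C.9} is \emph{not} used at all for the moment estimate; it is only needed for existence. The paper instead proves the uniform bound directly under $P$ in Proposition~\ref{Sequence L-p Estimate Proposition}, by decomposing $\overline{B}(u,u,\null_nY)\null_n\dot W_u$ into the piecewise-constant-in-$u$ term $\overline{B}(\underline{u}_n,\underline{u}_n,\null_nY)\null_n\dot W_u$, handled by the discrete BDG-type estimate~\eqref{Auxiliary Estimate 4} (independent of $n$), plus the remainder $(\overline{B}(u,u,\cdot)-\overline{B}(\underline{u}_n,\underline{u}_n,\cdot))\null_n\dot W_u$, where the $d_\infty$-Lipschitz bound from~\eqref{C.7} contributes a factor $|u-\underline{u}_n|^{1/2}$ that exactly compensates the $|\mathbb{T}_n|^{-1/2}$ growth of $\null_n\dot W$ seen in~\eqref{Auxiliary Estimate 2}. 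The boundedness of $\overline{B}$ and $\Sigma$ on the diagonal (from~\eqref{C.6}) and this half-H\oe lder compensation are what make the estimates uniform in $n$ — not~\eqref{C.9}. Without this decomposition, your argument does not deliver the claimed bound under $P$, and the subsequent Kolmogorov--Chentsov step (which you apply correctly) would then be building on an unproved estimate.
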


Finally, we consider a convergence result in H\oe lder norm in second moment.

\begin{Theorem}\label{Main Convergence Theorem}
Let~\eqref{C.4}-\eqref{C.9} hold, $h\in W_{r}^{1,2}([0,T],\mathbb{R}^{d})$ and $\alpha\in [0,1/2)$. Suppose that $\lim_{n\uparrow\infty} E[\|\null_{n}\xi^{r} - \xi^{r}\|_{\infty}^{2}]/|\mathbb{T}_{n}|^{2\alpha} = 0$ and there is $p > 2$ such that
\[
\alpha < 1/2 - 1/p\quad\text{and}\quad E[\|\xi^{r}\|_{\infty}^{p}] + \sup_{n\in\mathbb{N}} E[\|\null_{n}\xi^{r}\|_{\infty}^{(2\vee\eta)p}] < \infty.
\]
Let $\null_{n}Y$ and $Y$ be the unique strong solutions to~\eqref{Sequence VIE} and~\eqref{General VIE}, respectively, such that $\null_{n}Y^{r} = \null_{n}\xi^{r}$ and $Y^{r} = \xi^{r}$ a.s.~for all $n\in\mathbb{N}$, then
\begin{equation}\label{General Partition Limit}
\lim_{n\uparrow\infty} E[\max_{j\in\{0,\dots,k_{n}\}}|\null_{n}Y_{t_{j,n}} - Y_{t_{j,n}}|^{2}\big]/|\mathbb{T}_{n}|^{2\alpha} = 0.
\end{equation}
In particular,~\eqref{General Hoelder Limit} is satisfied. That is, $(\null_{n}Y)_{n\in\mathbb{N}}$ converges in the norm $\|\cdot\|_{\alpha,r}$ in second moment to $Y$.
\end{Theorem}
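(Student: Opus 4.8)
The plan is to run the Wong--Zakai scheme in the path-dependent Volterra setting: subtract the two equations, absorb the Volterra memory using the $\partial_{t}$-absolute continuity of the coefficients in~\eqref{C.5}, expand the smoothed-noise term $\int_{r}^{\cdot}\overline{B}(\cdot,s,\null_{n}Y)\,\null_{n}\dot{W}_{s}\,ds$ via the functional It{\^o} formula so that the correction $R$ from~\eqref{General Remainder Definition} emerges in the limit, bound the resulting remainders in second moment, and close with a discrete Gronwall inequality for the grid maxima; the passage from~\eqref{General Partition Limit} to~\eqref{General Hoelder Limit} is then a Kolmogorov--Chentsov interpolation. In detail: by Lemma~\ref{Auxiliary Convergence Lemma} the solutions $\null_{n}Y$ and $Y$ exist, are unique and strong, and satisfy $\sup_{n\in\mathbb{N}}E[\|\null_{n}Y\|_{\beta,r}^{p}]<\infty$ and $E[\|Y\|_{\beta,r}^{p}]<\infty$ for every $\beta\in[0,1/2-1/p)$; fix $\beta\in(\alpha,1/2-1/p)$. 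By~\eqref{C.5} and the stochastic Fubini theorem of Section~\ref{Section 2.2} (equivalently, the Volterra moment estimates of Section~\ref{Section 3.3}), the memory of~\eqref{Sequence VIE} and~\eqref{General VIE} reduces to Lebesgue and Wiener integrals of $\partial_{t}\underline{B},\partial_{t}B_{H},\partial_{t}\overline{B},\partial_{t}\Sigma$, so that all main estimates involve only single time integrations, with the additional singular drift built from $\overline{B}$, $\partial_{t}\overline{B}$ and $\null_{n}\dot{W}$ in the $\null_{n}Y$-equation made integrable by~\eqref{C.9} and the moment bounds just recalled. Write $D:=\null_{n}Y-Y$.

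\textbf{Decomposition of the smoothed noise and emergence of $R$.} On $(t_{i,n},t_{i+1,n}]$ one has $\null_{n}\dot{W}_{s}=(W_{t_{i,n}}-W_{t_{i-1,n}})/(t_{i+1,n}-t_{i,n})$. Comparing directly against $\int_{r}^{t}\overline{B}(t,s,Y)\,dW_{s}$ rather than through a pure Riemann sum, I expand $\overline{B}(t,\cdot,\null_{n}Y)$ around the left gridpoint by a functional It{\^o}/Taylor expansion --- legitimate because $(s,x)\mapsto\overline{B}(t,s,x)$ is of class $\mathbb{C}^{1,2}$ for every $t$. This produces: (a) an adapted Riemann--It{\^o} sum converging in $L^{2}$ to $\int_{r}^{t}\overline{B}(t,s,Y)\,dW_{s}$, with within-cell remainder a martingale whose $L^{2}$-norm is $O(|\mathbb{T}_{n}|^{\beta})$ by the $d_{\infty}$-Lipschitz bounds in~\eqref{C.7}; (b) first-order terms pairing $\partial_{x}\overline{B}$ with increments of $\null_{n}Y$ whose conditional second moments against $W_{t_{i,n}}-W_{t_{i-1,n}}$ --- using that the interpolation lags by one cell, so the $\overline{B}\,\null_{n}\dot{W}$-part of the increment contributes half and the $\Sigma\,dW$-part contributes fully --- sum to $\sum_{i}\partial_{x}\overline{B}\,\big((1/2)\overline{B}+\Sigma\big)(t_{i-1,n},t_{i-1,n},\cdot)(t_{i,n}-t_{i-1,n})\to\int_{r}^{t}R(t,s,Y)\,ds$; and (c) higher-order and path-dependence remainders. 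This is precisely Proposition~\ref{Main Decomposition Proposition} and the decomposition~\eqref{Main Remainder Decomposition}.

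\textbf{Remainder estimates, Gronwall, and the H\oe lder upgrade.} The remainders of~\eqref{Main Remainder Decomposition} are controlled in second moment in Sections~\ref{Section 4.2}--\ref{Section 4.3}: the easier ones by the deterministic and stochastic Volterra moment bounds of Section~\ref{Section 3.3}; the term from increments of $\partial_{x}\overline{B}$ by the functional It{\^o} formula of~\cite{ContFournieFuncIto} together with~\eqref{C.4}, which bounds $k_{n}|\mathbb{T}_{n}|$ and hence the number of cells; and the remaining, most delicate term by a further decomposition using~\eqref{Partition Condition},~\eqref{C.8} and the $(2\vee\eta)p$-th moments of $\null_{n}\xi^{r}$ to absorb the $\|\cdot\|_{\infty}^{\eta}$-growth of $\partial_{s}\overline{B}$ and $\partial_{xx}\overline{B}$. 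Each remainder turns out to be $o(|\mathbb{T}_{n}|^{2\alpha})$ in second moment, uniformly on $[r,T]$ (note $\alpha<1/2$, so a rate $o(|\mathbb{T}_{n}|)$ suffices for most terms and $o(|\mathbb{T}_{n}|^{2\beta})$, $\beta>\alpha$, for the others). Since the non-memory parts of the difference of~\eqref{Sequence VIE} and~\eqref{General VIE} are Lipschitz in the path variable by~\eqref{C.6}--\eqref{C.8}, a Burkholder--Davis--Gundy estimate and a discrete Gronwall argument applied to $t\mapsto E[\max_{t_{j,n}\le t}|D_{t_{j,n}}|^{2}]$ give $E[\max_{j\in\{0,\dots,k_{n}\}}|D_{t_{j,n}}|^{2}]\le C\big(\delta_{n}+E[\|\null_{n}\xi^{r}-\xi^{r}\|_{\infty}^{2}]\big)$ with $\delta_{n}=o(|\mathbb{T}_{n}|^{2\alpha})$ and $C$ independent of $n$; dividing by $|\mathbb{T}_{n}|^{2\alpha}$ and using $E[\|\null_{n}\xi^{r}-\xi^{r}\|_{\infty}^{2}]/|\mathbb{T}_{n}|^{2\alpha}\to0$ proves~\eqref{General Partition Limit}. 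For~\eqref{General Hoelder Limit}: on each cell $|D_{t}-D_{t_{j,n}}|\le|\mathbb{T}_{n}|^{\beta}\|D\|_{\beta,r}$, so $\|D\|_{\infty}/|\mathbb{T}_{n}|^{\alpha}\le\max_{j}|D_{t_{j,n}}|/|\mathbb{T}_{n}|^{\alpha}+|\mathbb{T}_{n}|^{\beta-\alpha}\big(\|\null_{n}Y\|_{\beta,r}+\|Y\|_{\beta,r}\big)\to0$ in $L^{2}$, while for the H\oe lder seminorm one splits pairs $s\neq t$ into $|s-t|\ge|\mathbb{T}_{n}|$, bounded by $2\|D\|_{\infty}/|\mathbb{T}_{n}|^{\alpha}$, and $|s-t|<|\mathbb{T}_{n}|$, bounded via $|D_{s}-D_{t}|/|s-t|^{\alpha}\le(2\|D\|_{\infty})^{1-\alpha/\beta}\|D\|_{\beta,r}^{\alpha/\beta}$, concluding with $\sup_{n}E[\|D\|_{\beta,r}^{p}]<\infty$ and $p>2$. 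This interpolation is the explicit Kolmogorov--Chentsov criterion of Section~\ref{Section 3.1}, and it is here that $\alpha<1/2-1/p$ is needed, to secure $\beta\in(\alpha,1/2-1/p)$.

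\textbf{Main obstacle.} The crux lies in the second and third steps: extracting the correction $R$ with exactly the coefficient $(1/2)\overline{B}+\Sigma$ while simultaneously controlling the third remainder of~\eqref{Main Remainder Decomposition} in second moment with a genuine rate $o(|\mathbb{T}_{n}|^{2\alpha})$ rather than merely $o(1)$. This forces one to track precisely how the one-cell lag of the piecewise-linear interpolation couples the forward Brownian increments $W_{t_{i,n}}-W_{t_{i-1,n}}$ with the increments of $\null_{n}Y$ --- whose $\Sigma\,dW$-component contributes $t_{i,n}-t_{i-1,n}$ in conditional expectation and whose $\overline{B}\,\null_{n}\dot{W}$-component contributes half of that --- and to carry out all Taylor-type expansions with the non-anticipative functional It{\^o} calculus of Section~\ref{Section 2.1} in place of the classical chain rule, keeping every error uniform in $n$ through the $(2\vee\eta)p$-moment assumption and the a priori H\oe lder bounds of Lemma~\ref{Auxiliary Convergence Lemma}. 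Verifying integrability of the singular drift $\overline{B}\,\null_{n}\dot{W}$ via~\eqref{C.9} is a necessary but secondary technical ingredient.
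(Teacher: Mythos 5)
Your proposal is correct and follows essentially the same route as the paper: reduce the H\oe lder norm limit~\eqref{General Hoelder Limit} to the grid-maximum limit~\eqref{General Partition Limit}, use Proposition~\ref{Main Decomposition Proposition} and the decomposition~\eqref{Main Remainder Decomposition} to expose the correction term $R$, estimate the three remainders via Sections~\ref{Section 4.2}--\ref{Section 4.3} (functional It\^{o} formula for the first, direct moment bounds for the second, and the delicate fifth-moment-style decomposition for the third), and close with Gronwall. The only place you deviate slightly is the H\oe lder upgrade: the paper invokes Lemma~\ref{Convergence along Partitions Lemma}, which applies the explicit Kolmogorov--Chentsov bound~\eqref{Kolmogorov-Chentsov Estimate} to the difference process within each cell, whereas you interpolate $\|D\|_{\alpha,r}$ between $\|D\|_{\infty}$ and the a priori bounded $\|D\|_{\beta,r}$ ($\beta\in(\alpha,1/2-1/p)$) from Lemma~\ref{Auxiliary Convergence Lemma} --- both work and rest on the same underlying moment estimates, so this is a cosmetic difference rather than a different proof.
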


\section{Estimates for convergence in H\oe lder norm in moment}\label{Section 3}

\subsection{Convergence in moment along a sequence of partitions}\label{Section 3.1}

We consider a sufficient condition for a sequence of processes to convergence in the norm $\|\cdot\|_{\alpha,r}$ in $p$-th moment, where $\alpha\in [0,1]$ and $p\geq 1$. Its derivation relies on an explicit Kolmogorov-Chentsov estimate~\cite{ContKalininSupp}[Proposition~12]. 

Namely, let $X$ be an $\mathbb{R}^{m}$-valued right-continuous processes for which there are $c_{0}\geq 0$, $p\geq 1$ and $q > 0$ such that $E[|X_{s} - X_{t}|^{p}]\leq c_{0}|s-t|^{1+q}$ for all $s,t\in [r,T]$. Then it follows that
\begin{equation}\label{Kolmogorov-Chentsov Estimate}
E\bigg[\sup_{s,t\in [r,T]:\,s\neq t}\frac{|X_{s} - X_{t}|^{p}}{|s-t|^{\alpha p}}\bigg] \leq k_{\alpha,p,q}c_{0}(T-r)^{1 + q - \alpha p}
\end{equation}
for any $\alpha\in [0,q/p)$ with $k_{\alpha,p,q}:=2^{p+q}(2^{q/p-\alpha} -1)^{-p}$. In particular, if $q\leq p$, then $X$ itself, and not necessarily a modification, admits a.s.~$\alpha$-H\oe lder continuous paths on $[r,T]$.

\begin{Lemma}\label{Convergence along Partitions Lemma}
Let $(\null_{n}X)_{n\in\mathbb{N}}$ be a sequence of $\mathbb{R}^{m}$-valued right-continuous processes for which there are $c_{0}\geq 0$, $p\geq 1$ and $q > 0$ with $q\leq p$ such that
\[
E[|\null_{n}X_{s} - \null_{n}X_{t}|^{p}] \leq c_{0}|s-t|^{1+q}
\]
for all $n\in\mathbb{N}$, each $j\in\{0,\dots,k_{n}-1\}$ and any $s,t\in [t_{j,n},t_{j+1,n}]$. If $(\|\null_{n}X^{r}\|_{\infty})_{n\in\mathbb{N}}$ and $(\max_{j\in\{1,\dots,k_{n}\}} |\null_{n}X_{t_{j,n}}|/|\mathbb{T}_{n}|^{\alpha})_{n\in\mathbb{N}}$ converge in $p$-th moment to zero, then so does the sequence $(\|\null_{n}X\|_{\alpha,r})_{n\in\mathbb{N}}$ for every $\alpha\in [0,q/p)$.
\end{Lemma}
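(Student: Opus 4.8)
The plan is to localise the Kolmogorov-Chentsov estimate~\eqref{Kolmogorov-Chentsov Estimate} to each subinterval of the partition $\mathbb{T}_{n}$, control the H\oe lder seminorm of $\null_{n}X$ on $[r,T]$ by combining the contributions from inside the subintervals with the contributions across them, and then use the two convergence hypotheses together with the balancedness condition~\eqref{Partition Condition} to let $n\uparrow\infty$. First I would write $\|\null_{n}X\|_{\alpha,r} = \|\null_{n}X^{r}\|_{\infty} + [\null_{n}X]_{\alpha}$, where $[\null_{n}X]_{\alpha}:=\sup_{s\neq t\in [r,T]}|\null_{n}X_{s}-\null_{n}X_{t}|/|s-t|^{\alpha}$. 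The first summand goes to zero in $p$-th moment by assumption, so everything reduces to showing $E[[\null_{n}X]_{\alpha}^{p}]\to 0$.

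Next I would split the seminorm. For $s,t\in [r,T]$ with $s<t$, either both lie in a common subinterval $[t_{j,n},t_{j+1,n}]$, or the segment $[s,t]$ meets at least one partition point. In the first case the local moment bound $E[|\null_{n}X_{s}-\null_{n}X_{t}|^{p}]\leq c_{0}|s-t|^{1+q}$ applies directly, so~\eqref{Kolmogorov-Chentsov Estimate} (applied on each $[t_{j,n},t_{j+1,n}]$ and then maximised over $j$) gives
\begin{equation*}
E\Big[\max_{j\in\{0,\dots,k_{n}-1\}}\sup_{s,t\in [t_{j,n},t_{j+1,n}]:\,s\neq t}\frac{|\null_{n}X_{s}-\null_{n}X_{t}|^{p}}{|s-t|^{\alpha p}}\Big]\leq k_{\alpha,p,q}c_{0}|\mathbb{T}_{n}|^{1+q-\alpha p},
\end{equation*}
which tends to zero since $q>\alpha p$. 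In the second case I use the elementary inequality that, up to a constant depending only on $p$, the increment $|\null_{n}X_{s}-\null_{n}X_{t}|$ is dominated by the sum of: the increment from $s$ to the first partition point to its right, the increments between consecutive partition points in $[s,t]$, and the increment from the last partition point to $t$; the first and last of these are again controlled by the intra-cell seminorm above, while the middle one is bounded by $\max_{i}|\null_{n}X_{t_{i,n}}-\null_{n}X_{t_{i-1,n}}|$ times the number of cells crossed. Using $|s-t|\geq|\mathbb{T}_{n}|/c_{\mathbb{T}}$ in that case, together with the bound $\max_{i}|\null_{n}X_{t_{i,n}}-\null_{n}X_{t_{i-1,n}}|\leq 2\max_{j\in\{1,\dots,k_{n}\}}|\null_{n}X_{t_{j,n}}|$, one gets (again up to a $p$- and $\alpha$-dependent constant, and after summing a geometric-type series in the number of crossed cells, controlled via~\eqref{Partition Condition})
\begin{equation*}
E\Big[\sup_{\substack{s,t\in [r,T]:\,s<t\\ [s,t]\cap\mathbb{T}_{n}\neq\emptyset}}\frac{|\null_{n}X_{s}-\null_{n}X_{t}|^{p}}{|s-t|^{\alpha p}}\Big]\lesssim k_{\alpha,p,q}c_{0}|\mathbb{T}_{n}|^{1+q-\alpha p} + E\Big[\max_{j\in\{1,\dots,k_{n}\}}\frac{|\null_{n}X_{t_{j,n}}|^{p}}{|\mathbb{T}_{n}|^{\alpha p}}\Big],
\end{equation*}
and both terms on the right converge to zero by the hypothesis $q>\alpha p$ and the assumed convergence in $p$-th moment of $(\max_{j}|\null_{n}X_{t_{j,n}}|/|\mathbb{T}_{n}|^{\alpha})_{n\in\mathbb{N}}$. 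Adding the two cases and the $\|\null_{n}X^{r}\|_{\infty}$ term yields the claim.

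The main obstacle is the bookkeeping in the cross-cell case: one must make sure that the factor counting how many partition cells a segment $[s,t]$ crosses is genuinely absorbed, and this is exactly where balancedness~\eqref{Partition Condition} enters — it guarantees that a segment of length $|s-t|$ crosses at most of order $|s-t|/|\mathbb{T}_{n}| + 1$ cells, so after dividing by $|s-t|^{\alpha}$ and using $\alpha<1$ the sum over crossed cells stays uniformly bounded (a geometric series with ratio controlled by $c_{\mathbb{T}}$). Care is also needed because $\null_{n}X$ is only right-continuous, so the suprema over $s,t$ should first be taken over rational (or dyadic) points and then extended by right-continuity, exactly as in the derivation of~\eqref{Kolmogorov-Chentsov Estimate}; this also explains why no modification is required. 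Finally, the constant $c_{0}$ is uniform in $n$ by hypothesis, so all estimates above are uniform in $n$ and the limits pass through cleanly.
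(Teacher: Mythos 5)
Your overall strategy — localising the Kolmogorov--Chentsov estimate~\eqref{Kolmogorov-Chentsov Estimate} to each partition cell, splitting the H\"older seminorm into an intra-cell part and a cross-cell part, and using the two convergence hypotheses — matches the paper, and your treatment of the intra-cell contribution is exactly right. The gap is in your handling of the cross-cell part.

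You propose to bound the ``middle'' increment $|{}_{n}X_{t_{j+1,n}}-{}_{n}X_{t_{i,n}}|$ by telescoping through consecutive partition points, which introduces a factor equal to the number $N$ of crossed cells, and then you assert this factor is absorbed ``after summing a geometric-type series \dots controlled via~\eqref{Partition Condition}.'' This step does not work: balancedness gives $N\lesssim |s-t|/|\mathbb{T}_{n}|$, so after normalising by $|s-t|^{\alpha}$ you are left with a factor $|s-t|^{1-\alpha}/|\mathbb{T}_{n}|$, i.e.\ you end up controlling the cross-cell seminorm by (a constant times) $\max_{j}|{}_{n}X_{t_{j,n}}|/|\mathbb{T}_{n}|$ rather than $\max_{j}|{}_{n}X_{t_{j,n}}|/|\mathbb{T}_{n}|^{\alpha}$. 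Since $\alpha<1$ and $|\mathbb{T}_{n}|\to 0$, the former is strictly worse, and the hypothesis only provides control of the latter; there is also no genuine geometric series to sum here, because balancedness makes all cells of comparable (not geometrically decaying) size. The paper avoids this entirely by \emph{not} telescoping: after writing $|{}_{n}X_{s}-{}_{n}X_{t}|\leq|{}_{n}X_{s}-{}_{n}X_{t_{j+1,n}}|+|{}_{n}X_{t_{j+1,n}}-{}_{n}X_{t_{i,n}}|+|{}_{n}X_{t_{i,n}}-{}_{n}X_{t}|$, it keeps the middle piece as a single quantity and uses $|s-t|\geq|t_{j+1,n}-t_{i,n}|$, so the whole cross-cell contribution is dominated by $\max_{i\neq j}|{}_{n}X_{t_{i,n}}-{}_{n}X_{t_{j,n}}|/|t_{i,n}-t_{j,n}|^{\alpha}$; then the direct bounds $|{}_{n}X_{t_{i,n}}-{}_{n}X_{t_{j,n}}|\leq 2\max_{k}|{}_{n}X_{t_{k,n}}|$ and $|t_{i,n}-t_{j,n}|\geq|\mathbb{T}_{n}|/c_{\mathbb{T}}$ immediately give $2^{p-1}c_{\mathbb{T}}^{\alpha p}E[\max_{j}|{}_{n}X_{t_{j,n}}|^{p}]/|\mathbb{T}_{n}|^{\alpha p}$, exactly the quantity the hypothesis controls. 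Replacing your telescoping step by this direct bound repairs the proof.
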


\begin{proof}
For given $n\in\mathbb{N}$ a case distinction yields that
\begin{align*}
\sup_{s,t\in [r,T]:\,s\neq t} \frac{|\null_{n}X_{s} - \null_{n}X_{t}|}{|s-t|^{\alpha}}&\leq 2\max_{j\in\{0,\dots,k_{n}-1\}}\sup_{s,t\in [t_{j,n},t_{j+1,n}]:\,s\neq t} \frac{|\null_{n}X_{s} - \null_{n}X_{t}|}{|s-t|^{\alpha}}\\
&\quad + \max_{i,j\in\{1,\dots,k_{n}\}:\,i\neq j} \frac{|\null_{n}X_{t_{i,n}} - \null_{n}X_{t_{j,n}}|}{|t_{i,n} - t_{j,n}|^{\alpha}}.
\end{align*}
By virtue of the Kolmogorov-Chentsov estimate~\eqref{Kolmogorov-Chentsov Estimate}, it holds that
\begin{equation*}
E\bigg[\max_{j\in\{0,\dots,k_{n}-1\}}\sup_{s,t\in [t_{j,n},t_{j+1,n}]:\,s\neq t}\frac{|\null_{n}X_{s} - \null_{n}X_{t}|^{p}}{|s-t|^{\alpha p}}\bigg] \leq k_{\alpha,p,q} c_{0}(T-r)|\mathbb{T}_{n}|^{q - \alpha p},
\end{equation*}
since $q > \alpha p$ and $\sum_{j=0}^{k_{n}-1}(t_{j+1,n} - t_{j,n}) = T-r$. Moreover, from condition~\eqref{Partition Condition} we infer that $|t_{i,n}-t_{j,n}|\geq |\mathbb{T}_{n}|/c_{\mathbb{T}}$ for all $i,j\in\{0,\dots,k_{n}\}$ with $i\neq j$. Hence,
\begin{equation*}
E\bigg[\max_{i,j\in\{1,\dots,k_{n}\}:\,i\neq j}\frac{|\null_{n}X_{t_{i,n}} - \null_{n}X_{t_{j,n}}|^{p}}{|t_{i,n} - t_{j,n}|^{\alpha p}}\bigg] \leq 2^{p-1} c_{\mathbb{T}}^{\alpha p} E\big[\max_{j\in\{1,\dots,k_{n}\}} |\null_{n}X_{t_{j,n}}|^{p}\big]/|\mathbb{T}_{n}|^{\alpha p}
\end{equation*}
and the claim follows from the definition of the norm $\|\cdot\|_{\alpha,r}$.
\end{proof}

\subsection{Sequential notation and auxiliary moment estimates}\label{Section 3.2}

Let us introduce relevant notations related to the sequence of partitions $(\mathbb{T}_{n})_{n\in\mathbb{N}}$. For fixed $n\in\mathbb{N}$ and $t\in [r,T)$, we choose $i\in\{0,\dots,k_{n}-1\}$ such that $t\in [t_{i,n},t_{i+1,n})$ and set
\[
\underline{t}_{n}:=t_{(i-1)\vee 0,n},\quad t_{n}:=t_{i,n}\quad\text{and}\quad \overline{t}_{n}:=t_{i+1,n}.
\]
Verbalized, $\underline{t}_{n}$ is the predecessor of $t_{n}$ relative to $\mathbb{T}_{n}$, provided $i\neq 0$, and $\overline{t}_{n}$ is the successor of $t_{n}$. For the sake of completeness, let $\underline{T}_{n}:=t_{k_{n-1},n}$, $T_{n}:=T$ and $\overline{T}_{n}:=T$. Further, for $i\in\{0,\dots,k_{n}\}$ we set
\[
\Delta t_{i,n}:=t_{i,n} - t_{(i-1)\vee 0,n}\quad\text{and}\quad \Delta W_{t_{i,n}}:=W_{t_{i,n}} - W_{t_{(i-1)\vee 0,n}}.
\]

For $p\geq 1$ we recall an interpolation error estimate in supremum for stochastic processes in $p$-th moment and an explicit integral moment estimate for the sequence $(\null_{n}W)_{n\in\mathbb{N}}$ of adapted linear interpolations of $W$ from~\cite{ContKalininSupp}[Lemmas 19 and 17].

\begin{enumerate}[(i)]
\item Let $(\null_{n}X)_{n\in\mathbb{N}}$ be a sequence of $\mathbb{R}^{m}$-valued right-continuous processes for which there are $c_{0}\geq 0$ and $q > 0$ such that $E[|\null_{n}X_{s} - \null_{n}X_{t}|^{p}] \leq c_{0}|s-t|^{1+q}$ for all $n\in\mathbb{N}$, each $j\in\{0,\dots,k_{n}-1\}$ and every $s,t\in [t_{j,n},t_{j+1,n}]$. Then there is $c_{p,q} > 0$ such that
\begin{equation}\label{Auxiliary Estimate 1}
E[\|L_{n}(\null_{n}X) - \null_{n}X\|_{\infty}^{p}] \leq c_{p,q} c_{0}|\mathbb{T}_{n}|^{q}
\end{equation}
for all $n\in\mathbb{N}$. To be precise, $c_{p,q}=2^{p-1}(1 + k_{0,p,q})(T-r)$.

\item Let $Z$ be an $\mathbb{R}^{d}$-valued random vector satisfying $Z\sim\mathcal{N}(0,\mathbbm{I}_{d})$. Then the constant $\hat{w}_{p,q}:= E[|Z|^{pq}]c_{\mathbb{T}}^{pq}$ satisfies
\begin{equation}\label{Auxiliary Estimate 2}
E\bigg[\bigg(\int_{s}^{t}|\null_{n}\dot{W}_{u}|^{q}\,du\bigg)^{p}\bigg] \leq \hat{w}_{p,q}|\mathbb{T}_{n}|^{-pq/2}(t-s)^{p}
\end{equation}
for all $n\in\mathbb{N}$ and each $s,t\in [r,T]$ with $s\leq t$.
\end{enumerate}

Next, we let $p\geq 2$ and state a Burkholder-Davis-Ghundy inequality for stochastic integrals with respect to $W$ from~\cite{MaoSDEandApp}[Theorem 7.2]. Based on this bound, one can deduce an estimate for integrals relative to $\null_{n}W$ that is independent of $n\in\mathbb{N}$ and which is given in~\cite{ContKalininSupp}[Proposition 16].

\begin{enumerate}[(i)]
\setcounter{enumi}{2}
\item For each $\mathbb{R}^{m\times d}$-valued progressively measurable process $X$ for which $\int_{r}^{T}E[|X_{u}|^{p}]\,du$ is finite,
\begin{equation}\label{Auxiliary Estimate 3}
E\bigg[\sup_{v\in [s,t]}\bigg|\int_{s}^{v}X_{u}\,dW_{u}\bigg|^{p}\bigg] \leq w_{p}(t-s)^{p/2-1}\int_{s}^{t}E[|X_{u}|^{p}]\,du
\end{equation}
for  all $s,t\in [r,T]$ with $s\leq t$, where $w_{p}:=((p^{3}/2)/(p-1))^{p/2}$.

\item Any $\mathbb{R}^{m\times d}$-valued progressively measurable process $X$ satisfies
\begin{equation}\label{Auxiliary Estimate 4}
E\bigg[\max_{v\in [s,t]}\int_{s}^{v}X_{\underline{u}_{n}}\,d\null_{n}W_{u}\bigg|^{p}\bigg]\leq \hat{w}_{p}(t-s)^{p/2}\max_{j\in\{0,\dots,k_{n}\}:\,t_{j,n}\in [\underline{s}_{n},\underline{t}_{n}]} E[|X_{t_{j,n}}|^{p}]
\end{equation}
for each $s,t\in [r,T]$ with $s\leq t$ with $\hat{w}_{p}:=3^{p} w_{p}c_{\mathbb{T}}^{p/2}$.
\end{enumerate}

\subsection{Moment estimates for Volterra integrals}\label{Section 3.3}

The first integral bound that we consider follows from the auxiliary estimate~\eqref{Auxiliary Estimate 2}.

\begin{Lemma}\label{Auxiliary Lemma 1}
Let $p > 1$ and assume for each $n\in\mathbb{N}$ that $\null_{n}X:[0,T]\times [0,T]\times\Omega\rightarrow\mathbb{R}_{+}$, $(t,s,\omega)\mapsto X_{t,s}(\omega)$ is a product measurable function. If there are $\overline{p} > p$, $c_{\overline{p}} > 0$ and $q \geq \overline{p}/2$ such that
\begin{equation}\label{Auxiliary Estimation Condition}
E\bigg[\max_{j\in\{0,\dots,k_{n}\}}\int_{r}^{t_{j,n}}\null_{n}X_{t_{j,n},s}^{\overline{p}}\,ds\bigg] \leq c_{\overline{p}}|\mathbb{T}_{n}|^{q}\quad\text{for all  $n\in\mathbb{N}$.}
\end{equation}
Then there is $c_{p} > 0$ such that
\[
E\bigg[\max_{j\in\{0,\dots,k_{n}\}}\bigg(\int_{r}^{t_{j,n}}X_{t_{j,n},s}|\null_{n}\dot{W}_{s}|\,ds\bigg)^{p}\bigg]\leq c_{p}|\mathbb{T}_{n}|^{p(q/\overline{p} - 1/2)}\quad\text{for any $n\in\mathbb{N}$.}
\]
\end{Lemma}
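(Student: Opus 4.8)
The plan is to reduce the claimed bound to an application of the auxiliary estimate~\eqref{Auxiliary Estimate 2} via H\"older's inequality in the time variable. First I would fix $n\in\mathbb{N}$ and, for each $j\in\{0,\dots,k_{n}\}$, split the exponent on the integrand: writing $\overline{q}$ for the dual exponent of $\overline{p}/p$ (so that $p/\overline{p} + 1/\overline{q} = 1$), H\"older's inequality on $[r,t_{j,n}]$ with respect to Lebesgue measure gives
\[
\int_{r}^{t_{j,n}} X_{t_{j,n},s}|\null_{n}\dot{W}_{s}|\,ds \leq \bigg(\int_{r}^{t_{j,n}} X_{t_{j,n},s}^{\overline{p}/p}\,ds\bigg)^{p/\overline{p}}\bigg(\int_{r}^{t_{j,n}} |\null_{n}\dot{W}_{s}|^{\overline{q}}\,ds\bigg)^{1/\overline{q}}.
\]
Hmm — this does not quite match the hypothesis, which controls $X^{\overline{p}}$, not $X^{\overline{p}/p}$; the cleaner route is to apply H\"older with the pair $(\overline{p}, \overline{p}')$ where $\overline{p}'$ is the dual exponent of $\overline{p}$ itself, separating $X$ (raised to the first power, put under the $\overline{p}$-norm) from $|\null_{n}\dot W|$ (put under the $\overline{p}'$-norm), and then raise everything to the $p$-th power. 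This yields, after taking the maximum over $j$ inside,
\[
\max_{j}\bigg(\int_{r}^{t_{j,n}} X_{t_{j,n},s}|\null_{n}\dot W_{s}|\,ds\bigg)^{p} \leq \bigg(\max_{j}\int_{r}^{t_{j,n}} X_{t_{j,n},s}^{\overline{p}}\,ds\bigg)^{p/\overline{p}}\bigg(\int_{r}^{T}|\null_{n}\dot W_{s}|^{\overline{p}'}\,ds\bigg)^{p/\overline{p}'},
\]
using $t_{j,n}\leq T$ to drop the dependence of the $\null_{n}\dot W$-integral on $j$.

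Next I would take expectations and apply the Cauchy--Schwarz inequality (or H\"older with the same conjugate pair $\overline{p}/p$ and its dual, which is legitimate since $\overline{p}>p$) to separate the two factors:
\[
E\bigg[\max_{j}\Big(\int_{r}^{t_{j,n}} X_{t_{j,n},s}|\null_{n}\dot W_{s}|\,ds\Big)^{p}\bigg] \leq E\bigg[\max_{j}\int_{r}^{t_{j,n}} X_{t_{j,n},s}^{\overline{p}}\,ds\bigg]^{p/\overline{p}} \, E\bigg[\Big(\int_{r}^{T}|\null_{n}\dot W_{s}|^{\overline{p}'}\,ds\Big)^{r_{0}}\bigg]^{1/r_{0}}
\]
for the appropriate exponent $r_{0}$ chosen so the powers match; the first factor is bounded by $(c_{\overline{p}}|\mathbb{T}_{n}|^{q})^{p/\overline{p}}$ by hypothesis~\eqref{Auxiliary Estimation Condition}. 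For the second factor I would invoke~\eqref{Auxiliary Estimate 2} with $s=r$, $t=T$, and the exponent $q$ there set to $\overline{p}'$ and the outer power set to $r_{0}$, giving a bound of the form $\mathrm{const}\cdot|\mathbb{T}_{n}|^{-\overline{p}'/2}$ after the $1/r_{0}$-th root is taken. Collecting the powers of $|\mathbb{T}_{n}|$ produces the exponent $(q/\overline{p})p - \overline{p}'/(2)\cdot(1/\overline{p}')\cdot p$; I need to verify this equals $p(q/\overline{p} - 1/2)$, which it does once the bookkeeping between $\overline{p}'$ and the H\"older exponents is done correctly.

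The main obstacle — really the only delicate point — is choosing the H\"older exponents so that (a) the power of $X$ appearing is exactly $\overline{p}$ (to use the hypothesis verbatim) and (b) the power of $|\null_{n}\dot W|$ and the outer expectation-exponent are compatible with~\eqref{Auxiliary Estimate 2}, whose statement allows an arbitrary exponent $q\geq\overline{p}/2$ inside and an arbitrary outer power $p$, so there is enough freedom. Concretely, the condition $q\geq\overline{p}/2$ in the Lemma's hypothesis is precisely what guarantees the resulting exponent $p(q/\overline{p}-1/2)$ is nonnegative, so the bound is meaningful as $|\mathbb{T}_{n}|\downarrow 0$; and the strict inequality $\overline{p}>p$ is what makes the separating H\"older step applicable with finite exponents. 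Once the exponents are pinned down, the constant $c_{p}$ is explicit in terms of $c_{\overline{p}}$, $\hat{w}$-type constants from~\eqref{Auxiliary Estimate 2}, and $T-r$, and no further estimates are needed.
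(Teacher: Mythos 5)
Your approach is essentially the same as the paper's --- pointwise H\"older in the time variable, then H\"older in expectation, then the auxiliary estimate~\eqref{Auxiliary Estimate 2} --- differing only in that you split pointwise with the pair $(\overline{p},\overline{p}')$ rather than $(p,p/(p-1))$, which spares you the extra Jensen step the paper needs to pass from $\big(\int X^{p}\,ds\big)^{\overline{p}/p}$ to $(T-r)^{\overline{p}/p-1}\int X^{\overline{p}}\,ds$. Note one small bookkeeping slip: the outer exponent on the $\null_{n}\dot W$-factor after H\"older in expectation should be $(\overline{p}-p)/\overline{p}$ (the reciprocal of the conjugate $\overline{p}/(\overline{p}-p)$ of $\overline{p}/p$), not $1/r_{0}$ with $r_{0}=p(\overline{p}-1)/(\overline{p}-p)$, but as you anticipated the power of $|\mathbb{T}_{n}|$ from that factor still comes out to $-p/2$ and the total exponent is $p(q/\overline{p}-1/2)$.
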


\begin{proof}
Let $q_{1}$ and $q_{2}$ denote the dual exponents of $p$ and $\overline{p}/p$, respectively. Then two applications of H\oe lder's inequality yield that
\begin{align*}
E\bigg[\max_{j\in\{0,\dots,k_{n}\}}\bigg(\int_{r}^{t_{j,n}}&X_{t_{j,n},s}|\null_{n}\dot{W}_{s}|\,ds\bigg)^{p}\bigg]\\
&\leq \bigg(E\bigg[\max_{j\in\{0,\dots,k_{n}\}}\bigg(\int_{r}^{t_{j,n}}\null_{n}X_{t_{j,n},s}^{p}\,ds\bigg)^{\overline{p}/p}\bigg]\bigg)^{p/\overline{p}}c_{p,1}|\mathbb{T}_{n}|^{-p/2}
\end{align*}
with $c_{p,1}:= \hat{w}_{pq_{2}/q_{1},q_{1}}^{1/q_{2}}(T-r)^{p/q_{1}}$, where $\hat{w}_{pq_{2}/q_{1},q_{1}}$ is the constant introduced at~\eqref{Auxiliary Estimate 2}. For this reason, the constant $c_{p}:= (T-r)^{1 - p/\overline{p}}c_{\overline{p}}^{p/\overline{p}}c_{p,1}$ satisfies the desired estimate.
\end{proof}

\begin{Remark}\label{Auxiliary Remark}
For any $n\in\mathbb{N}$ let $\null_{n}X$ be independent of the first time variable, that is, there is an $\mathbb{R}_{+}$-valued measurable process $\null_{n}Y$ with $\null_{n}X_{t,s} = \null_{n}Y_{s}$ for all $s,t\in [0,T]$. Then for condition~\eqref{Auxiliary Estimation Condition} to hold, it suffices that there is $\overline{c}_{\overline{p}} > 0$ so that $E[\null_{n}Y_{s}^{\overline{p}}]\leq \overline{c}_{\overline{p}}|\mathbb{T}_{n}|^{q}$ for every $s\in [r,T)$ and each $n\in\mathbb{N}$.
\end{Remark}

For the second and various other estimates in the following section, let us use for each $n\in\mathbb{N}$ the function $\gamma_{n}:[r,T]\rightarrow [0,c_{\mathbb{T}}]$ defined by
\begin{equation}\label{Gamma Function}
\gamma_{n}(s) := \frac{\Delta s_{n}}{\Delta \overline{s}_{n}}.
\end{equation}
Put differently, $\gamma_{n} = \Delta t_{i,n}/\Delta t_{i+1,n}$ on $[t_{i,n},t_{i+1,n})$ for all $i\in\{0,\dots,k_{n}-1\}$ and $\gamma_{n}(T) = 1$.

\begin{Lemma}\label{Auxiliary Lemma 2}
Assume that $F:[r,T]^{2}\times C([0,T],\mathbb{R}^{m})\rightarrow\mathbb{R}^{m}$ is a non-anticipative product measurable map for which there are $\lambda_{0},c_{0}\geq 0$ such that
\[
|F(u,t,x) - F(u,s,x)|\leq \lambda_{0}d_{\infty}((t,x),(s,x))\quad\text{and}\quad |F(t,s,x)|\leq c_{0}(1 + \|x\|_{\infty})
\]
for all $s,t,u\in [r,T]$ with $s < t < u$ and each $x\in C([0,T],\mathbb{R}^{m})$. Further, let $(\null_{n}Y)_{n\in\mathbb{N}}$ be a sequence in $\mathscr{C}([0,T],\mathbb{R}^{m})$ which there are $p\geq 1$ and $c_{p,0}\geq 0$ such that
\[
E[\|\null_{n}Y\|_{\infty}^{p}] + E[\|\null_{n}Y^{s} - \null_{n}Y^{t}\|_{\infty}^{p}]/|s-t|^{p/2}\leq c_{p,0}\big(1 + E[\|\null_{n}Y^{r}\|_{\infty}^{p}]\big)
\]
for all $n\in\mathbb{N}$, each $s,t\in [r,T]$ with $s < t$ and any $x\in C([0,T],\mathbb{R}^{m})$. Then there is $c_{p} > 0$ such that
\[
E\bigg[\max_{j\in\{0,\dots,k_{n}\}}\bigg|\int_{r}^{t_{j,n}}F(t_{j,n},\underline{s}_{n},\null_{n}Y)\big(\gamma_{n}(s) - 1\big)\,ds\bigg|^{p}\bigg] \leq c_{p}|\mathbb{T}_{n}|^{p/2}\big(1 + E[\|\null_{n}Y^{r}\|_{\infty}^{p}]\big)
\]
for every $n\in\mathbb{N}$.
\end{Lemma}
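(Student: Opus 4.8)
The plan is to exploit the fact that the summand $F(t_{j,n},\underline{s}_{n},\null_{n}Y)(\gamma_{n}(s)-1)$ has, on each subinterval of the partition, mean value close to zero in a suitable sense, so that the integral $\int_{r}^{t_{j,n}}$ is effectively a sum of small, nearly-cancelling contributions. Concretely, I would first decompose the integral over $[r,t_{j,n}]$ into the sum over the partition cells $[t_{i,n},t_{i+1,n})$ for $i\le j-1$. On the cell $[t_{i,n},t_{i+1,n})$ the integrand $(s\mapsto F(t_{j,n},\underline{s}_{n},\null_{n}Y))$ is the constant $F(t_{j,n},t_{(i-1)\vee0,n},\null_{n}Y)$ (since $\underline{s}_n=t_{(i-1)\vee0,n}$ there), while $\gamma_n$ equals the constant $\Delta t_{i,n}/\Delta t_{i+1,n}$. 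Hence the $i$-th contribution is $F(t_{j,n},t_{(i-1)\vee0,n},\null_{n}Y)\bigl(\Delta t_{i,n}-\Delta t_{i+1,n}\bigr)$. This turns the integral into a telescoping-type sum whose terms are differences of consecutive mesh increments multiplied by slowly varying coefficients.

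Next I would perform an Abel summation (summation by parts) on $\sum_{i} F(t_{j,n},t_{(i-1)\vee0,n},\null_{n}Y)(\Delta t_{i,n}-\Delta t_{i+1,n})$, transferring the difference operator from the $\Delta t$'s onto the coefficients $F$. The boundary terms will be of the form $F(\cdot)\Delta t_{i,n}$, which are bounded by $|\mathbb{T}_n|$ times $c_0(1+\|\null_{n}Y\|_\infty)$ and therefore contribute at most $O(|\mathbb{T}_n|)\le O(|\mathbb{T}_n|^{1/2})$ in the relevant moment once one applies the growth bound on $F$ together with $E[\|\null_{n}Y\|_\infty^p]\le c_{p,0}(1+E[\|\null_{n}Y^r\|_\infty^p])$. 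The interior sum becomes $\sum_i \Delta t_{i,n}\bigl(F(t_{j,n},t_{(i-1)\vee0,n},\null_{n}Y)-F(t_{j,n},t_{i,n},\null_{n}Y)\bigr)$, and here the $d_\infty$-Lipschitz hypothesis on $F$ in its second argument gives $|F(t_{j,n},t_{(i-1)\vee0,n},\null_{n}Y)-F(t_{j,n},t_{i,n},\null_{n}Y)|\le\lambda_0 d_\infty((t_{i,n},\null_{n}Y),(t_{(i-1)\vee0,n},\null_{n}Y))=\lambda_0(\Delta t_{i,n}^{1/2}+\|\null_{n}Y^{t_{i,n}}-\null_{n}Y^{t_{(i-1)\vee0,n}}\|_\infty)$. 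Thus each interior term is bounded by $\lambda_0\Delta t_{i,n}(|\mathbb{T}_n|^{1/2}+\|\null_{n}Y^{t_{i,n}}-\null_{n}Y^{t_{(i-1)\vee0,n}}\|_\infty)$.

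The remaining work is to sum these bounds and take the $p$-th moment of the maximum over $j$. Summing $\lambda_0\Delta t_{i,n}|\mathbb{T}_n|^{1/2}$ over $i$ gives $\lambda_0(T-r)|\mathbb{T}_n|^{1/2}$, already of the desired order. For the stochastic part $\sum_i\Delta t_{i,n}\|\null_{n}Y^{t_{i,n}}-\null_{n}Y^{t_{(i-1)\vee0,n}}\|_\infty$, I would first pull the maximum over $j$ inside (the sum is increasing in $j$, so the maximum is attained at $j=k_n$), then apply Jensen/Hölder in the measure $\Delta t_{i,n}/(T-r)$ on the index set to move the $p$-th power inside the sum, producing $(T-r)^{p-1}\sum_i\Delta t_{i,n}E[\|\null_{n}Y^{t_{i,n}}-\null_{n}Y^{t_{(i-1)\vee0,n}}\|_\infty^p]$. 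The path-regularity hypothesis bounds each expectation by $c_{p,0}(1+E[\|\null_{n}Y^r\|_\infty^p])\,\Delta t_{i,n}^{p/2}\le c_{p,0}(1+E[\|\null_{n}Y^r\|_\infty^p])|\mathbb{T}_n|^{p/2}$, and summing over $i$ against $\Delta t_{i,n}$ contributes another factor $T-r$, yielding the claimed $c_p|\mathbb{T}_n|^{p/2}(1+E[\|\null_{n}Y^r\|_\infty^p])$. I expect the main obstacle to be organizing the summation-by-parts cleanly enough that the boundary terms and the off-by-one shifts in $t_{(i-1)\vee0,n}$ versus $t_{i,n}$ are handled without sign errors, and making sure the balancedness condition \eqref{Partition Condition} is invoked exactly where needed (to control $\gamma_n$ by $c_{\mathbb{T}}$ and to compare $\Delta t_{i,n}$ with $|\mathbb{T}_n|$); the analytic estimates themselves are routine once the combinatorial decomposition is set up correctly.
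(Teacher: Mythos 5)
Your proposal is correct and follows essentially the same route as the paper: the paper's key step --- rewriting $\int_{r}^{t_{j,n}}F(t_{j,n},\underline{s}_{n},\null_{n}Y)\gamma_{n}(s)\,ds = \int_{r}^{t_{j-1,n}}F(t_{j,n},s_{n},\null_{n}Y)\,ds$ and then subtracting $\int_{r}^{t_{j,n}}F(t_{j,n},\underline{s}_{n},\null_{n}Y)\,ds$ --- is exactly the integral-level version of your Abel summation, yielding the same interior-difference term (controlled by the $d_{\infty}$-Lipschitz bound on $F$ and the $p/2$-H\oe lder moment bound on $\null_{n}Y$) and the same single-cell boundary term on $[t_{j-1,n},t_{j,n}]$ (controlled by the growth bound on $F$). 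One small note: the balancedness condition~\eqref{Partition Condition} turns out not to be needed anywhere in this lemma, since $\Delta t_{i,n}\le|\mathbb{T}_{n}|$ holds trivially and $\gamma_{n}$ itself is never estimated directly in either argument.
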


\begin{proof}
Let $E[\|\null_{n}Y^{r}\|_{\infty}^{p}] < \infty$, as otherwise the claimed estimate is infinite. Clearly, a decomposition of the integral shows that
\[
\int_{r}^{t_{j,n}}F(t_{j,n},\underline{s}_{n},\null_{n}Y)\gamma_{n}(s)\,ds = \int_{r}^{t_{j-1,n}}F(t_{j,n},s_{n},\null_{n}Y)\,ds
\]
for all $j\in\{1,\dots,k_{n}\}$. Hence, a first estimation gives
\[
E\bigg[\max_{j\in\{1,\dots,k_{n}\}}\bigg|\int_{r}^{t_{j-1,n}}F(t_{j,n},s_{n},\null_{n}Y) - F(t_{j,n},\underline{s}_{n},\null_{n}Y)\,ds\bigg|^{p}\bigg]\leq c_{p,1}|\mathbb{T}_{n}|^{p/2}\big(1 + E[\|\null_{n}Y^{r}\|_{\infty}^{p}]\big)
\]
for $c_{p,1}:=2^{p-1}(T-r)^{p}\lambda_{0}^{p}(1+c_{p,0})$ and a second yields that
\[
E\bigg[\max_{j\in\{1,\dots,k_{n}\}}\bigg|\int_{t_{j-1,n}}^{t_{j,n}}F(t_{j,n},\underline{s}_{n},\null_{n}Y)\,ds\bigg|^{p}\bigg]\leq c_{p,2}|\mathbb{T}_{n}|^{p}\big(1 + E[\|\null_{n}Y^{r}\|_{\infty}^{p}]\big)
\]
with $c_{p,2}:=2^{p-1}c_{0}^{p}(1 + c_{p,0})$. Thus, the constant $c_{p}:=2^{p-1}(c_{p,1} + (T-r)^{p/2}c_{p,2})$ satisfies the asserted estimate.
\end{proof}

The third estimate deals with Volterra integrals driven by $\null_{n}W$ and $W$, where $n\in\mathbb{N}$.

\begin{Proposition}\label{Auxiliary Proposition}
Let $F:[r,T]^{2}\times C([0,T],\mathbb{R}^{m})\rightarrow\mathbb{R}^{m\times d}$ be non-anticipative, product measurable and such that $F(\cdot,s,x)$ is absolutely continuous on $[s,T]$ for all $s\in [r,T]$ and each $x\in C([0,T],\mathbb{R}^{m})$. Suppose that there are $\lambda_{0},c_{0}\geq 0$ such that
\begin{equation*}
\begin{split}
|F(u,t,x) - F(u,s,x)| + |\partial_{u}F(u,t,x) - \partial_{u}F(u,s,x)|&\leq \lambda_{0} d_{\infty}((t,x),(s,x))\\
|F(t,s,x)| + |\partial_{t}F(t,s,x)|&\leq c_{0}(1 + \|x\|_{\infty})
\end{split}
\end{equation*}
for any $s,t,u\in [r,T)$ with $s < t < u$ and every $x\in C([0,T],\mathbb{R}^{m})$. Moreover, let $(\null_{n}Y)_{n\in\mathbb{N}}$ be a sequence in $\mathscr{C}([0,T],\mathbb{R}^{m})$ for which there are $p\geq 2$ and $c_{p,0}\geq 0$ such that
\[
E[\|\null_{n}Y\|_{\infty}^{p}] + E[\|\null_{n}Y^{s} - \null_{n}Y^{t}\|_{\infty}^{p}]/|s-t|^{p/2} \leq c_{p,0}\big(1 + E[\|\null_{n}Y^{r}\|_{\infty}^{p}]\big)
\]
for all $n\in\mathbb{N}$, each $s,t\in [r,T]$ with $s < t$ and any $x\in C([0,T],\mathbb{R}^{m})$. Then there is $c_{p} > 0$ such that
\[
E\bigg[\max_{j\in\{0,\dots,k_{n}\}}\bigg|\int_{r}^{t_{j,n}} F(t_{j,n},\underline{s}_{n},\null_{n}Y)\,d(\null_{n}W_{s} - W_{s})\bigg|^{p}\bigg] \leq c_{p}|\mathbb{T}_{n}|^{p/2-1}\big(1 + E[\|\null_{n}Y^{r}\|_{\infty}^{p}]\big)
\]
for every $n\in\mathbb{N}$.
\end{Proposition}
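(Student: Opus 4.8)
The strategy is to write the difference of Volterra integrals as a sum over the partition subintervals, integrate the $W$-integral by parts in the time variable (exploiting the absolute continuity of $F(\cdot,s,x)$ in its first argument), and then compare term by term with the corresponding $\null_{n}W$-integral, which by construction is the piecewise-linear interpolation of $W$. Concretely, fix $n\in\mathbb{N}$ and $j\in\{0,\dots,k_{n}\}$. First I would use the fact that $\null_{n}W$ has constant derivative $\Delta W_{t_{i,n}}/\Delta t_{i+1,n}$ on each $[t_{i,n},t_{i+1,n})$ to rewrite $\int_{r}^{t_{j,n}}F(t_{j,n},\underline{s}_{n},\null_{n}Y)\,d\null_{n}W_{s}$ as $\sum_{i=0}^{j-1}F(t_{j,n},t_{(i-1)\vee 0,n},\null_{n}Y)\Delta W_{t_{i,n}}$, i.e.\ a "Riemann–Stieltjes sum" of $F$ evaluated at the left endpoints against the Brownian increments. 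On the other hand, $\int_{r}^{t_{j,n}}F(t_{j,n},\underline{s}_{n},\null_{n}Y)\,dW_{s}$ splits as the same sum of increments but now with $F(t_{j,n},\underline{s}_{n},\null_{n}Y)$ integrated against $dW_{s}$ over each subinterval. So the difference becomes a sum of stochastic integrals $\int_{t_{i,n}}^{t_{i+1,n}}\bigl(F(t_{j,n},t_{(i-1)\vee 0,n},\null_{n}Y)-F(t_{j,n},\underline{s}_{n},\null_{n}Y)\bigr)\,dW_{s}$; but $\underline{s}_{n}=t_{(i-1)\vee 0,n}$ is \emph{constant} on each such subinterval, so this difference is in fact zero. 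The real content is therefore the dependence of $F$ on its first argument $t_{j,n}$ as $j$ varies together with the maximum over $j$.

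The correct decomposition is thus to compare, for each fixed $i$, the contribution to index $j$ with a "diagonal" reference: write $F(t_{j,n},\underline{s}_{n},\null_{n}Y)=F(t_{i+1,n},\underline{s}_{n},\null_{n}Y)+\int_{t_{i+1,n}}^{t_{j,n}}\partial_{u}F(u,\underline{s}_{n},\null_{n}Y)\,du$ using absolute continuity in the first variable, and similarly for the $\null_{n}W$ integral. The "diagonal" pieces $\int_{r}^{t_{j,n}}F(\overline{s}_{n},\underline{s}_{n},\null_{n}Y)\,d(\null_{n}W_{s}-W_{s})$ telescope nicely because on $[t_{i,n},t_{i+1,n})$ one has $F(\overline{s}_{n},\underline{s}_{n},\null_{n}Y)$ constant, so $\int_{t_{i,n}}^{t_{i+1,n}}F(\overline{s}_{n},\underline{s}_{n},\null_{n}Y)\,d(\null_{n}W_{s}-W_{s})=F(t_{i+1,n},t_{(i-1)\vee 0,n},\null_{n}Y)(\Delta W_{t_{i,n}}-\Delta W_{t_{i,n}})$ — wait, one must be careful: $\null_{n}W$ over $[t_{i,n},t_{i+1,n}]$ increases by $\Delta W_{t_{i,n}}=W_{t_{i,n}}-W_{t_{(i-1)\vee 0,n}}$, not by the genuine increment $W_{t_{i+1,n}}-W_{t_{i,n}}$, which produces a genuine lag-by-one mismatch. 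Summing by parts (Abel summation) this lag-one mismatch against the increments of $F$ in the $t$-variable is exactly what generates the $|\mathbb{T}_{n}|^{p/2-1}$ gain: each such term carries one Brownian increment (size $|\mathbb{T}_{n}|^{1/2}$ in $L^{p}$) and the increments of $F$ sum to something bounded uniformly in $n$ via the affine-growth and Lipschitz bounds on $F$ and $\partial_{u}F$. For the off-diagonal remainder $\int_{r}^{t_{j,n}}\bigl(\int_{\overline{s}_{n}}^{t_{j,n}}\partial_{u}F(u,\underline{s}_{n},\null_{n}Y)\,du\bigr)\,d(\null_{n}W_{s}-W_{s})$, I would apply a stochastic Fubini theorem to swap the $du$ and $d(\null_{n}W_{s}-W_{s})$ integrals, reducing it to a $du$-average of integrals of the form already handled (with $\partial_{u}F$ in place of $F$, which satisfies the same hypotheses), and then bound using the maximal inequalities~\eqref{Auxiliary Estimate 3},~\eqref{Auxiliary Estimate 4} for $W$ and $\null_{n}W$ respectively, together with the interpolation-error estimate~\eqref{Auxiliary Estimate 1} applied to $\null_{n}W$ (which has the required second-moment increment bound with $q=1$).

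Throughout, the moment bounds on $\null_{n}Y$ let me control $E[\|\null_{n}Y\|_{\infty}^{p}]$ and hence $|F(\cdots,\null_{n}Y)|^{p}$ and $|\partial_{u}F(\cdots,\null_{n}Y)|^{p}$ by $c_{p,0}(1+E[\|\null_{n}Y^{r}\|_{\infty}^{p}])$, and the $\null_{n}Y^{s}-\null_{n}Y^{t}$ bound enters when estimating the $d_{\infty}$-distances $d_{\infty}((t,\null_{n}Y),(s,\null_{n}Y))$ arising from the Lipschitz hypotheses on $F$. After collecting the diagonal telescoping term, the lag-one correction term, and the Fubini'd off-diagonal term, one takes $E[\max_{j}|\cdot|^{p}]$, uses $\max_j$ of a sum $\le$ sum of $\max_j$, and assembles the constant $c_p$. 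I expect the main obstacle to be the bookkeeping of the one-step lag between $\null_{n}W$ and $W$ on each subinterval: getting the Abel-summation-by-parts identity right so that the "extra" Brownian increment is paired with a difference $F(t_{i+1,n},\cdot)-F(t_{i,n},\cdot)$ that is $O(|\mathbb{T}_{n}|)$-small (via $\partial_t F$ bounded) rather than merely $O(1)$, is what upgrades a naive $|\mathbb{T}_{n}|^{-1/2}$ blow-up into the claimed $|\mathbb{T}_{n}|^{p/2-1}$ decay; this is the analogue of~\cite{ContKalininSupp}[Proposition 22] and the Volterra (first-variable) dependence is the new feature requiring care.
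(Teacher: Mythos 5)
Your proposal takes a genuinely different route from the paper, but the write-up contains a false intermediate claim that reveals a confused setup. You first assert that the difference $\int_{t_{i,n}}^{t_{i+1,n}}\bigl(F(t_{j,n},t_{(i-1)\vee 0,n},\null_{n}Y)-F(t_{j,n},\underline{s}_{n},\null_{n}Y)\bigr)\,dW_{s}$ is ``in fact zero''; but this follows from comparing the $\null_{n}W$-sum and the $W$-integral at the level of their \emph{coefficients} while ignoring that they are paired against \emph{different} Brownian increments ($W_{t_{i,n}}-W_{t_{i-1,n}}$ versus $W_{t_{i+1,n}}-W_{t_{i,n}}$). The two quantities are not equal, and concluding ``the real content is the dependence on the first argument'' at that point is simply wrong. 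You catch this a few lines later (``wait, one must be careful\ldots lag-by-one mismatch'') and pivot to the Abel-summation idea, but the initial analysis and the pivot leave the argument internally inconsistent: the diagonal/off-diagonal split around $F(\overline{s}_{n},\cdot)$ is introduced only after the abandoned first attempt, and the martingale/BDG argument needed to bound the Abel-summed middle term $\sum_{i}(a_{i}-a_{i-1})\delta_{i}$ (where $a_{i}=F(t_{i+1,n},t_{i-1,n},\null_{n}Y)$, $\delta_{i}=\Delta W_{t_{i,n}}$) is not spelled out at all.

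The paper avoids all of the lag-one bookkeeping by a single identity: the delayed construction of $\null_{n}W$ (derivative on $(t_{i,n},t_{i+1,n}]$ being $\Delta W_{t_{i,n}}/\Delta t_{i+1,n}$) makes the $\null_{n}W$-integral \emph{exactly equal} to a genuine It{\^o} integral over a shortened window with $\underline{s}_{n}$ shifted to $s_{n}$, namely $\int_{r}^{t_{j,n}}F(t_{j,n},\underline{s}_{n},\null_{n}Y)\,d\null_{n}W_{s}=\int_{r}^{t_{j-1,n}}F(t_{j,n},s_{n},\null_{n}Y)\,dW_{s}$ a.s. The difference $\int F\,d(\null_{n}W-W)$ then splits into $\int_{r}^{t_{j-1,n}}\bigl(F(t_{j,n},s_{n},\cdot)-F(t_{j,n},\underline{s}_{n},\cdot)\bigr)\,dW_{s}$, controlled via the $d_{\infty}$-Lipschitz bound since $s_{n}-\underline{s}_{n}\leq|\mathbb{T}_{n}|$, and the last-subinterval term $\int_{t_{j-1,n}}^{t_{j,n}}F(t_{j,n},\underline{s}_{n},\cdot)\,dW_{s}$, whose $\max_{j}$ produces the $|\mathbb{T}_{n}|^{p/2-1}$ rate after summing over $j$. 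Stochastic Fubini is applied to replace the frozen first argument $t_{j,n}$ by the running time $s$, analogous to but cleaner than your expansion around $\overline{s}_{n}$. The approaches are morally parallel (your Abel summation is the discrete shadow of the paper's index shift), but the paper's route eliminates the need for a separate martingale maximal estimate on the Abel-summed increments, keeps the maximum over $j$ isolated in a single boundary piece, and avoids the $s_{n}$-vs.-$u_{n}$ parametrization subtleties that your Fubini swap of the off-diagonal remainder would run into (since after the swap the first argument of $\partial_{u}F$ is continuous rather than a grid point, the inner integral is not literally ``of the form already handled''). If you wish to salvage your route, make sure to (a) delete the false ``zero difference'' claim, (b) verify that $a_{i}$ is $\mathscr{F}_{t_{i-1,n}}$-measurable and invoke Doob/BDG for the middle Abel sum, and (c) carry out the Fubini for the off-diagonal piece carefully with the correct indicator set $\{s:\overline{s}_{n}\leq u\}$.
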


\begin{proof}
We suppose that $E[\|\null_{n}Y^{r}\|_{\infty}^{p}]$ is finite and decompose the integral to get that
\[
\int_{r}^{t_{j,n}}F(t_{j,n},\underline{s}_{n},\null_{n}Y)\,d\null_{n}W_{s} = \int_{r}^{t_{j-1,n}}F(t_{j,n},s_{n},\null_{n}Y)\,dW_{s}\quad\text{a.s.}
\]
for each $j\in\{1,\dots,k_{n}\}$. Hence, we may apply Fubini's theorem for stochastic integrals from~\cite{VeraarFubini} to obtain that
\begin{align*}
\int_{r}^{t_{j-1,n}}&F(t_{j,n},s_{n},\null_{n}Y) - F(t_{j,n},\underline{s}_{n},\null_{n}Y)\,dW_{s} = \int_{r}^{t_{j-1,n}}F(s,s_{n},\null_{n}Y) - F(s,\underline{s}_{n},\null_{n}Y)\,dW_{s}\\
&+ \int_{r}^{t_{j,n}}\int_{r}^{t\wedge t_{j-1,n}}\partial_{t}F(t,s_{n},\null_{n}Y) - \partial_{t}F(t,\underline{s}_{n},\null_{n}Y)\,dW_{s}\,dt\quad\text{a.s.}
\end{align*}
for all $j\in\{1,\dots,k_{n}\}$. Regarding the first expression, we estimate that
\[
E\bigg[\max_{j\in\{1,\dots,k_{n}\}}\bigg|\int_{r}^{t_{j-1,n}}F(s,s_{n},\null_{n}Y)-F(s,\underline{s}_{n},\null_{n}Y)\,dW_{s}\bigg|^{p}\bigg]\leq c_{p,1}|\mathbb{T}_{n}|^{p/2}\big(1 + E[\|\null_{n}Y^{r}\|_{\infty}^{p}]\big)
\]
for $c_{p,1}:=2^{p-1}w_{p}(T-r)^{p/2}\lambda_{0}^{p}(1 + c_{p,0})$, where $w_{p}$ is the constant satisfying~\eqref{Auxiliary Estimate 3}. For the second expression we first calculate that
\begin{align*}
E\bigg[\max_{j\in\{1,\dots,k_{n}\}}\bigg|\int_{r}^{t_{j-1,n}}\int_{r}^{t}\partial_{t}F(t,s_{n},\null_{n}Y) - &\partial_{t}F(t,\underline{s}_{n},\null_{n}Y)\,dW_{s}\,dt\bigg|^{p}\bigg]\\
&\leq c_{p,2}|\mathbb{T}_{n}|^{p/2}\big(1 + E[\|\null_{n}Y^{r}\|_{\infty}^{p}]\big)
\end{align*}
with $c_{p,2}:=2^{p}(p+2)^{-1}w_{p}(T-r)^{3p/2}\lambda_{0}^{p}(1 + c_{p,0})$. And secondly,
\begin{align*}
&E\bigg[\max_{j\in\{1,\dots,k_{n}\}}\bigg|\int_{t_{j-1,n}}^{t_{j,n}}\int_{r}^{t_{j-1,n}}\partial_{t}F(t,s_{n},\null_{n}Y) - \partial_{t}F(t,\underline{s}_{n},\null_{n}Y)\,dW_{s}\,dt\bigg|^{p}\bigg]\\
&\leq \sum_{j=1}^{k_{n}}(t_{j,n} - t_{j-1,n})^{p-1}\int_{t_{j-1,n}}^{t_{j,n}}E\bigg[\bigg|\int_{r}^{t_{j-1,n}}\partial_{t}F(t,s_{n},\null_{n}Y) - \partial_{t}F(t,\underline{s}_{n},\null_{n}Y)\,dW_{s}\bigg|^{p}\bigg]\,dt\\
&\leq c_{p,3}|\mathbb{T}_{n}|^{p-1}\big(1 + E[\|\null_{n}Y^{r}\|_{\infty}^{p}]\big),
\end{align*}
where $c_{p,3}:=2^{p-1}w_{p}(T-r)^{p/2 + 1}\lambda_{0}^{p}(1 + c_{p,0})$. Next, for the remaining term Fubini's theorem for stochastic integrals yields that
\begin{equation}\label{Auxiliary Decomposition}
\begin{split}
\int_{t_{j-1,n}}^{t_{j,n}}F(t_{j,n},\underline{s}_{n},\null_{n}Y)\,dW_{s} &= \int_{t_{j-1,n}}^{t_{j,n}}F(s,\underline{s}_{n},\null_{n}Y)\,dW_{s}\\
&\quad + \int_{t_{j-1,n}}^{t_{j,n}}\int_{t_{j-1,n}}^{t}\partial_{t}F(t,\underline{s}_{n},\null_{n}Y)\,dW_{s}\,dt\quad\text{a.s.}
\end{split}
\end{equation}
for any $j\in\{1,\dots,k_{n}\}$. For the first term we have
\begin{align*}
E\bigg[\max_{j\in\{1,\dots,k_{n}\}}\bigg|\int_{t_{j-1,n}}^{t_{j,n}}F(s,\underline{s}_{n},\null_{n}Y)\,dW_{s}\bigg|^{p}\bigg]&\leq \sum_{j=1}^{k_{n}}E\bigg[\bigg|\int_{t_{j-1,n}}^{t_{j,n}}F(s,\underline{s}_{n},\null_{n}Y)\,dW_{s}\bigg|^{p}\bigg]\\
&\leq c_{p,4}|\mathbb{T}_{n}|^{p/2-1}\big(1 + E[\|\null_{n}Y^{r}\|_{\infty}^{p}]\big)
\end{align*}
with $c_{p,4}:=2^{p-1}w_{p}(T-r)c_{0}^{p}(1 + c_{p,0})$. Finally, for the second stochastic integral in the decomposition~\eqref{Auxiliary Decomposition} it holds that
\begin{align*}
E\bigg[\max_{j\in\{1,\dots,k_{n}\}}&\bigg|\int_{t_{j-1,n}}^{t_{j,n}}\int_{t_{j-1,n}}^{t}\partial_{t}F(t,\underline{s}_{n},\null_{n}Y)\,dW_{s}\,dt\bigg|^{p}\bigg]\\
&\leq \sum_{j=1}^{k_{n}}(t_{j,n} - t_{j-1,n})^{p-1}\int_{t_{j-1,n}}^{t_{j,n}} E\bigg[\bigg|\int_{t_{j-1,n}}^{t}\partial_{t}F(t,\underline{s}_{n},\null_{n}Y)\,dW_{s}\bigg|^{p}\bigg]\,dt\\
&\leq c_{p,5}|\mathbb{T}_{n}|^{3p/2-1}\big(1 + E[\|\null_{n}Y^{r}\|_{\infty}^{p}]\big)
\end{align*}
for $c_{p,5}:= 2^{p}(p+2)^{-1}w_{p}(T-r)c_{0}^{p}(1 + c_{p,0})$. Hence, the asserted estimate follows readily by setting $c_{p}:=5^{p-1}((T-r)(c_{p,1} + c_{p,2}) + (T-r)^{p/2}c_{p,3} + c_{p,4} + (T-r)^{p}c_{p,5})$.
\end{proof}

\section{Estimates and decompositions for the convergence result}\label{Section 4}

\subsection{Decomposition into remainder terms}\label{Section 4.1}

We first give a moment estimate for solutions to~\eqref{Sequence VIE} that does not depend on $n\in\mathbb{N}$.

\begin{Proposition}\label{Sequence L-p Estimate Proposition}
Let~~\eqref{C.5} and~\eqref{C.6} hold, $h\in W_{r}^{1,2}([0,T],\mathbb{R}^{d})$ and $\lambda\geq 0$ be so that
\[
|\overline{B}(u,t,x) - \overline{B}(u,s,x)| + |\partial_{u}\overline{B}(u,t,x) - \partial_{u}\overline{B}(u,s,x)|\leq \lambda d_{\infty}((t,x),(s,x))
\]
for any $s,t,u\in [r,T)$ with $s < t < u$ and every $x\in C([0,T],\mathbb{R}^{m})$. Then for each $p\geq 2$ there is $c_{p} > 0$ such that any $n\in\mathbb{N}$ and each solution $\null_{n}Y$ to~\eqref{Sequence VIE} satisfy
\begin{equation}\label{Sequence L-p Estimate}
E[\|\null_{n}Y\|_{\infty}^{p}] + E[\|\null_{n}Y^{s} - \null_{n}Y^{t}\|_{\infty}^{p}]/|s-t|^{p/2}\leq c_{p}\big(1 + E[\|\null_{n}Y^{r}\|_{\infty}^{p}]\big)
\end{equation}
for all $s,t\in [r,T]$ with $s \neq t$.
\end{Proposition}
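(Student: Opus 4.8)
The plan is to turn the Volterra equation~\eqref{Sequence VIE} into an equivalent path-dependent stochastic differential equation and then carry out a Gr\oe nwall-type $L^{p}$-estimate, the only genuinely new point being the integral driven by the interpolated Brownian motion $\null_{n}W$.

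First I would fix $n\in\mathbb{N}$ and a solution $\null_{n}Y$, assuming without loss of generality that $E[\|\null_{n}Y^{r}\|_{\infty}^{p}]<\infty$, since otherwise the right-hand side of~\eqref{Sequence L-p Estimate} is infinite. Exactly as in Section~\ref{Section 2.2}, the absolute continuity assumptions in~\eqref{C.5} permit applying Fubini's theorem---the ordinary one to the $d\null_{n}W$- and $dh$-integrals, the stochastic one from~\cite{VeraarFubini} to the $dW$-integral---to rewrite~\eqref{Sequence VIE} in the form
\begin{equation*}
\null_{n}Y_{t} = \null_{n}Y_{r} + \int_{r}^{t}\Phi_{w}\,dw + \int_{r}^{t}\overline{B}(w,w,\null_{n}Y)\,d\null_{n}W_{w} + \int_{r}^{t}\Sigma(w,w,\null_{n}Y)\,dW_{w},
\end{equation*}
where $\Phi_{w}$ collects the diagonal terms $(\underline{B}+B_{H}\dot{h})(w,w,\null_{n}Y)$ together with the Volterra corrections $\int_{r}^{w}\partial_{w}\underline{B}(w,\tau,\null_{n}Y)\,d\tau$, $\int_{r}^{w}\partial_{w}B_{H}(w,\tau,\null_{n}Y)\dot{h}(\tau)\,d\tau$, $\int_{r}^{w}\partial_{w}\overline{B}(w,\tau,\null_{n}Y)\,d\null_{n}W_{\tau}$ and $\int_{r}^{w}\partial_{w}\Sigma(w,\tau,\null_{n}Y)\,dW_{\tau}$. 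The task then splits into bounding $E[\sup_{v\in[r,t]}|\null_{n}Y_{v}-\null_{n}Y_{r}|^{p}]$ and, for $s<t$, $E[\sup_{v\in[s,t]}|\null_{n}Y_{v}-\null_{n}Y_{s}|^{p}]$, the latter being equal to $E[\|\null_{n}Y^{s}-\null_{n}Y^{t}\|_{\infty}^{p}]$.

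The routine contributions are handled as follows. The diagonal drift and the Volterra corrections built from $\partial_{w}\underline{B}$, $\partial_{w}B_{H}$ have sub-linear growth of exponent $\kappa<1$ by~\eqref{C.6}; together with H\oe lder's inequality and, for the factors $\dot{h}$, the Cauchy--Schwarz inequality using $h\in W_{r}^{1,2}([0,T],\mathbb{R}^{d})$, their $p$-th moments are bounded by $c\big(1+\int_{r}^{t}E[\|\null_{n}Y^{w}\|_{\infty}^{p}]\,dw\big)$. The stochastic integral against $W$ with coefficient $\Sigma(w,w,\cdot)$ and the Volterra correction $\int_{r}^{w}\partial_{w}\Sigma(w,\tau,\cdot)\,dW_{\tau}$ are controlled by the Burkholder-Davis-Gundy bound~\eqref{Auxiliary Estimate 3}, using that $\Sigma$ and $\partial_{t}\Sigma$ are bounded by~\eqref{C.6}; they contribute only constants. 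On a small subinterval $[s,t]$ with $t-s\le 2|\mathbb{T}_{n}|$ all of these bounds, and also the crude estimates of the $d\null_{n}W$-integrals through $\int|\null_{n}\dot{W}|\,d\tau$ applied to the Volterra form of~\eqref{Sequence VIE}, already yield $E[\|\null_{n}Y^{s}-\null_{n}Y^{t}\|_{\infty}^{p}]\le C\big(1+E[\|\null_{n}Y\|_{\infty}^{p}]\big)|t-s|^{p/2}$, because on such an interval $|t-s|/|\mathbb{T}_{n}|$ is bounded, so that the negative powers of $|\mathbb{T}_{n}|$ carried by $\null_{n}\dot{W}$ (controlled through~\eqref{Auxiliary Estimate 2}) are absorbed by the extra factor $(t-s)^{p/2}\le(2|\mathbb{T}_{n}|)^{p/2}$.

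The essential difficulty is the integral against $\null_{n}W$ on the full interval. Here I would replace the continuous integrand $\overline{B}(w,w,\null_{n}Y)$ by its value at the preceding partition point, which exposes a discrete martingale structure that makes the bound~\eqref{Auxiliary Estimate 4} applicable; since $\overline{B}$ is bounded by~\eqref{C.6}, this principal part is dominated by a constant, and the correction $\int_{r}^{w}\partial_{w}\overline{B}(w,\tau,\null_{n}Y)\,d\null_{n}W_{\tau}$ inside $\Phi_{w}$ is treated verbatim using the boundedness of $\partial_{w}\overline{B}$. The regularization error is a finite-variation integral whose integrand, by the extra $1/2$-H\oe lder hypothesis of the proposition together with the boundedness of $\partial_{t}\overline{B}$, is at most $\lambda\big(|\mathbb{T}_{n}|^{1/2}+\|\null_{n}Y^{\underline{w}_{n}}-\null_{n}Y^{w}\|_{\infty}\big)$; pairing this against $|\null_{n}\dot{W}_{w}|$ and using~\eqref{Auxiliary Estimate 2}, the negative powers of $|\mathbb{T}_{n}|$ from $\null_{n}\dot{W}$ cancel the positive ones carried by the increment modulus. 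The point I expect to be most delicate is precisely that this error refers to a modulus of $\null_{n}Y$ at the scale $|\mathbb{T}_{n}|$, i.e.\ to a restriction of the very increment quantity being estimated; I would break the apparent circularity by a preliminary localization with the stopping times $\tau_{N}:=\inf\{t\ge r:|\null_{n}Y_{t}|\ge N\}$ to secure a priori finiteness of the $p$-th moments, then feed in the small-scale increment bound of the previous paragraph and close the supremum and increment estimates simultaneously by Gr\oe nwall's inequality; this works only because the resulting inequality depends on $\null_{n}Y$ sub-linearly (through the drift growth in~\eqref{C.6}) while $\overline{B}$ and $\Sigma$ are bounded, so that the stated constant $c_{p}$---depending on $p$, the structural constants and the partition constant $c_{\mathbb{T}}$ from~\eqref{Partition Condition}, but not on $n$---is obtained.
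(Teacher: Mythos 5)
Your proposal is correct and follows essentially the same route as the paper's proof: rewrite the Volterra equation as a path-dependent SDE via Fubini, localize with stopping times, establish a small-scale increment estimate on partition subintervals where the negative powers of $|\mathbb{T}_n|$ from $\null_n\dot W$ are absorbed, replace the $d\null_n W$-integrand by its value at the preceding partition point so that~\eqref{Auxiliary Estimate 4} applies while the regularization error is fed back from the small-scale estimate via H\"older's inequality (this is where the sub-linear drift growth $\kappa<1$ is used), and close with Gronwall and Fatou. You have correctly identified all the genuinely load-bearing steps, including the delicate near-circularity at the scale $|\mathbb T_n|$ and how the boundedness of $\overline B,\Sigma$ together with the $\kappa$-growth of the drift breaks it.
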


\begin{proof}
We let $E[\|\null_{n}Y^{r}\|_{\infty}^{p}] < \infty$ and may certainly assume in~\eqref{C.6} that $\kappa > 0$. For given $l\in\mathbb{N}$ the stopping time $\tau_{l,n}:=\inf\{t\in [0,T]\,|\, |\null_{n}Y_{t}|\geq l\}\vee r$ satisfies $\|\null_{n}Y^{\tau_{l,n}}\|_{\infty}\leq \|\null_{n}Y^{r}\|_{\infty}\vee l$ and we readily estimate that
\begin{equation}\label{Sequence L-p Estimate Equation}
\begin{split}
\big(E[\|\null_{n}Y^{s\wedge\tau_{l,n}} - &\null_{n}Y^{t\wedge\tau_{l,n}}\|_{\infty}^{p}]\big)^{1/p}\leq \bigg(\overline{c}_{p}(t-s)^{p/2-1}\int_{s}^{t}1 + E[\|\null_{n}Y^{u\wedge\tau_{l,n}}\|_{\infty}^{\kappa p}]\,du\bigg)^{1/p}\\
& + \bigg(E\bigg[\sup_{v\in [s,t]}\bigg|\int_{s}^{v\wedge\tau_{l,n}}\overline{B}(u,u,\null_{n}Y)\,d\null_{n}W_{u}\bigg|^{p}\bigg]\bigg)^{1/p}\\
& + \bigg(E\bigg[\bigg(\int_{s}^{t\wedge\tau_{l,n}}\bigg|\int_{r}^{v}\partial_{v}\overline{B}(v,u,\null_{n}Y)\,d\null_{n}W_{u}\bigg|\,dv\bigg)^{p}\bigg]\bigg)^{1/p}
\end{split}
\end{equation}
for any fixed $s,t\in [r,T]$ with $s \leq t$ and $\overline{c}_{p}:=6^{p-1}(1 + T - r)^{p}((T-r)^{p/2} + \|h\|_{1,2,r}^{p} + w_{p})c^{p}$. We recall the constant $\hat{w}_{p/\kappa,1}$ such that~\eqref{Auxiliary Estimate 2} holds when $p$ and $q$ are replaced by $p/\kappa$ and $1$, respectively. Then
\begin{align*}
\bigg(E\bigg[\bigg(\int_{\underline{u}_{n}}^{u\wedge\tau_{l,n}}|\overline{B}(v,v,\null_{n}Y)\null_{n}\dot{W}_{v}|\,dv\bigg)^{p/\kappa}\bigg]\bigg)^{\kappa}&\leq c_{p,1}(u-\underline{u}_{n})^{p/2}\quad\text{and}\\
\bigg(E\bigg[\bigg(\int_{\underline{u}_{n}}^{u\wedge\tau_{l,n}}\int_{r}^{v}|\partial_{v}\overline{B}(v,u',\null_{n}Y)\null_{n}\dot{W}_{u'}|\,du'\,dv\bigg)^{p/\kappa}\bigg]\bigg)^{\kappa}&\leq (T-r)^{p}c_{p,1}(u-\underline{u}_{n})^{p/2}
\end{align*}
for any given $u\in [s,T]$ with the constant$c_{p,1}:=2^{p/2}\hat{w}_{p/\kappa,1}^{\kappa}c^{p}$. We let $\overline{c}_{p/\kappa}$ be defined just as $\overline{c}_{p}$ above with $p$ replaced by $p/\kappa$ to get that
\[
(E[\|\null_{n}Y^{u\wedge\tau_{l,n}} - \null_{n}Y^{\underline{u}_{n}\wedge\tau_{l,n}}\|_{\infty}^{p/\kappa}])^{\kappa} \leq c_{p,2}(u-\underline{u}_{n})^{p/2}\big(1 + E[\|\null_{n}Y^{u\wedge\tau_{l,n}}\|_{\infty}^{p}]\big)^{\kappa}
\]
for $c_{p,2}:=2^{p-1}(\overline{c}_{p/\kappa}^{\kappa} + (1 + T -r)^{p}c_{p,1})$, due to the validity of~\eqref{Sequence L-p Estimate Equation}. Hence, an application of H\oe lder's inequality yields that
\begin{align*}
&E\bigg[\bigg(\int_{s}^{t\wedge\tau_{l,n}}\big|\big(\overline{B}(u,u,\null_{n}Y) - \overline{B}(\underline{u}_{n},\underline{u}_{n},\null_{n}Y)\,\null_{n}\dot{W}_{u}\big|\,du\bigg)^{p}\bigg]\\
&\leq c_{p,3}(t-s)^{p/2-1}\int_{s}^{t}\big(1 + E[\|\null_{n}Y^{u\wedge\tau_{l,n}}\|_{\infty}^{p}]\big)^{\kappa}\,du\quad\text{and}\\
&E\bigg[\bigg(\int_{s}^{t\wedge\tau_{l,n}}\int_{r}^{v}\big|\big(\partial_{v}\overline{B}(v,u,\null_{n}Y) - \partial_{v}\overline{B}(v,\underline{u}_{n},\null_{n}Y)\big)\null_{n}\dot{W}_{u}\big|\,du\,dv\bigg)^{p}\bigg]\\
&\leq (T-r)^{p}c_{p,3}(t-s)^{p/2-1}\int_{s}^{t}\big(1 + E[\|\null_{n}Y^{u\wedge\tau_{l,n}}\|_{\infty}^{p}]\big)^{\kappa}\,du,
\end{align*}
where $c_{p,3}:=2^{p/2}3^{p}\hat{w}_{(p/2)/(1-\kappa)}^{1-\kappa}(\lambda^{p}(1 + c_{p,2}) + c^{p}(T-r)^{p/2})$. Moreover, the constant $\hat{w}_{p}$ appearing in~\eqref{Auxiliary Estimate 4} satisfies 
\begin{align*}
&E\bigg[\sup_{v\in [s,t]}\bigg|\int_{s}^{v\wedge\tau_{l,n}}\overline{B}(\underline{u}_{n},\underline{u}_{n},\null_{n}Y)\,d\null_{n}W_{u}\bigg|^{p}\bigg] \leq \hat{w}_{p}c^{p}(t-s)^{p/2}\quad\text{and}\\
&E\bigg[\bigg(\int_{s}^{t\wedge\tau_{l,n}}\bigg|\int_{r}^{v}\partial_{v}\overline{B}(v,\underline{u}_{n},\null_{n}Y)\,d\null_{n}W_{u}\bigg|\,dv\bigg)^{p}\bigg]\leq (T-r)^{p}\hat{w}_{p}c^{p}(t-s)^{p/2}.
\end{align*}
Thus, with the constant $c_{p,4}:=3^{p-1}(2\overline{c}_{p} + (1+T-r)^{p}(c_{p,3} + \hat{w}_{p}c^{p})$ we can now infer from~\eqref{Sequence L-p Estimate Equation} that
\begin{equation}\label{Sequence L-p Estimate Equation 2}
E[\|\null_{n}Y^{s\wedge\tau_{l,n}} - \null_{n}Y^{t\wedge\tau_{l,n}}\|_{\infty}^{p}\big] \leq c_{p,4}(t-s)^{p/2-1}\int_{s}^{t}1 + E[\|\null_{n}Y^{u\wedge\tau_{l,n}}\|_{\infty}^{p}]\,du.
\end{equation}
Hence, Gronwall's inequality and Fatou's lemma imply that
\begin{equation*}
E[\|\null_{n}Y^{t}\|_{\infty}^{p}]\leq\liminf_{l\uparrow\infty} E[\|\null_{n}Y^{t\wedge\tau_{l,n}}\|_{\infty}^{p}]\leq c_{p,5}\big(1 + E[\|\null_{n}Y^{r}\|_{\infty}^{p}]\big),
\end{equation*}
where $c_{p,5}:=2^{p-1}\max\{1,T-r\}^{p/2}\max\{1,c_{p,4}\}e^{2^{p-1}(T-r)^{p/2}c_{p,4}}$. For this reason, we set $c_{p}:=(1 + c_{p,4})(1 + c_{p,5})$ and apply Fatou's lemma to~\eqref{Sequence L-p Estimate Equation 2}, which gives the result.
\end{proof}

\begin{Corollary}\label{Sequence L-p Estimate Corollary}
Assume~\eqref{C.5},~\eqref{C.6} and~\eqref{C.8} and let $h\in W_{r}^{1,2}([0,T],\mathbb{R}^{d})$. Then for every $p\geq 2$ there is $c_{p} > 0$ such that each solution $Y$ to~\eqref{General VIE} satisfies
\begin{equation}\label{Sequence L-p Estimate 2}
E[\|Y\|_{\infty}^{p}] + E[\|Y^{s} - Y^{t}\|_{\infty}^{p}]/|s-t|^{p/2} \leq c_{p}\big(1 + E[\|Y^{r}\|_{\infty}^{p}]\big)
\end{equation}
for every $s,t\in [r,T]$ with $s\neq t$.
\end{Corollary}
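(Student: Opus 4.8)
The plan is to recognize \eqref{General VIE} as a special case of \eqref{Sequence VIE} and then read off the a priori bound \eqref{Sequence L-p Estimate 2} from Proposition~\ref{Sequence L-p Estimate Proposition}. Indeed, equation \eqref{General VIE} for $Y$ is exactly equation \eqref{Sequence VIE} for the non-anticipative coefficients $(\underline{B}+R,\,B_{H},\,0,\,\overline{B}+\Sigma)$ in place of $(\underline{B},B_{H},\overline{B},\Sigma)$: with the ``$\overline{B}$-slot'' equal to the zero map, the term $\overline{B}(t,s,\null_{n}Y)\null_{n}\dot{W}_{s}$ in \eqref{Sequence VIE} vanishes and the resulting equation coincides verbatim with \eqref{General VIE}, where $R$ is the remainder defined in \eqref{General Remainder Definition}. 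Since \eqref{General VIE} carries no index $n$, I would apply Proposition~\ref{Sequence L-p Estimate Proposition} to the constant sequence $\null_{n}Y=Y$, so that the conclusion \eqref{Sequence L-p Estimate} becomes precisely \eqref{Sequence L-p Estimate 2}.

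What then needs checking is that the hypotheses of Proposition~\ref{Sequence L-p Estimate Proposition} transfer to the modified coefficients. The separate Lipschitz hypothesis imposed there on the ``$\overline{B}$-slot'' is vacuous, since that slot is now $0$ (take $\lambda=0$). For \eqref{C.5}: absolute continuity of $(\underline{B}+R)(\cdot,s,x)$, $B_{H}(\cdot,s,x)$ and $(\overline{B}+\Sigma)(\cdot,s,x)$ on $[s,T]$ follows from the corresponding assertions for $\underline{B},B_{H},\overline{B},\Sigma$ in \eqref{C.5} together with absolute continuity of $R(\cdot,s,x)$ on $[s,T]$; the remaining $\mathbb{C}^{1,2}$- and absolute-continuity requirements concern the zero map and are trivial. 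To see that $R(\cdot,s,x)$ is absolutely continuous I would argue as for the correction term $\rho$ in Section~\ref{Section 2.2}: writing $R_{k}(t,s,x)=\sum_{l=1}^{d}\partial_{x}\overline{B}_{k,l}(t,s,x)\,v_{l}$ with $v_{l}:=((1/2)\overline{B}+\Sigma)(s,s,x)e_{l}$ constant in $t$, the claim reduces to absolute continuity of $\partial_{x}\overline{B}(\cdot,s,x)$ on $(s,T]$, which is part of \eqref{C.5}, while \eqref{C.8} bounds its weak time derivative $\partial_{t}\partial_{x}\overline{B}$; since only the Lebesgue integral of $R(t,\cdot,Y)$ enters \eqref{General VIE}, the diagonal value $s=t$ is immaterial and one may, exactly as with $\overline{\rho}$ in Section~\ref{Section 2.2}, work with the continuous-in-$t$ extension of $R$.

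For \eqref{C.6}, the bounds on $B_{H}$ and on $\overline{B}+\Sigma$ follow from \eqref{C.6} by the triangle inequality, so it remains to bound $R$ and $\partial_{t}R$. By \eqref{C.6} one has $|((1/2)\overline{B}+\Sigma)(s,s,x)|\leq(3/2)c$, and \eqref{C.8} gives $\|\partial_{x}\overline{B}(s,s,x)\|\leq\overline{c}$ and $\|\partial_{t}\partial_{x}\overline{B}(t,s,x)\|\leq\overline{c}$; hence $|R(t,s,x)|$ and $|\partial_{t}R(t,s,x)|$ are dominated by a constant multiple of $\overline{c}\,c$, so that $|(\underline{B}+R)(s,s,x)|+|\partial_{t}(\underline{B}+R)(t,s,x)|\leq c'(1+\|x\|_{\infty}^{\kappa})$ for a suitable $c'\geq0$; thus \eqref{C.6} holds for the modified system with the same $\kappa$. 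With \eqref{C.5}, \eqref{C.6} and the trivial Lipschitz condition in place, Proposition~\ref{Sequence L-p Estimate Proposition} yields \eqref{Sequence L-p Estimate 2}. I expect the only delicate point to be precisely this bookkeeping around $R$ --- its absolute continuity in $t$ and the growth bounds extracted from \eqref{C.6} and \eqref{C.8}, together with the diagonal-value convention resolved as in Section~\ref{Section 2.2}; there is no analytic difficulty beyond what is already embodied in Proposition~\ref{Sequence L-p Estimate Proposition}.
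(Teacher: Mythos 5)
Your proof is correct and follows exactly the paper's route: the paper likewise reduces~\eqref{General VIE} to~\eqref{Sequence VIE} via the substitution $\underline{B}\mapsto\underline{B}+R$, $\overline{B}\mapsto 0$, $\Sigma\mapsto\overline{B}+\Sigma$ and invokes Proposition~\ref{Sequence L-p Estimate Proposition}, with boundedness of $R$ as the key observation. You merely spell out the hypothesis-checking (absolute continuity and growth of $R$ from~\eqref{C.5},~\eqref{C.6},~\eqref{C.8}) that the paper leaves implicit.
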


\begin{proof}
As the map $R$ given by~\eqref{General Remainder Definition} is bounded, the assertion is a direct consequence of Proposition~\ref{Sequence L-p Estimate Proposition} by replacing $\underline{B}$ by $\underline{B} + R$, $\overline{B}$ by $0$ and $\Sigma$ by $\overline{B} + \Sigma$.
\end{proof}

For $n\in\mathbb{N}$ let us recall the linear operator $L_{n}$ and the function $\gamma_{n}$ given at~\eqref{Linear Operator L} and~\eqref{Gamma Function}, respectively, and deduce the main decomposition to establish the limit~\eqref{General Partition Limit}.

\begin{Proposition}\label{Main Decomposition Proposition}
Let~\eqref{C.5}-\eqref{C.8} hold and $h\in W_{r}^{1,2}([0,T],\mathbb{R}^{d})$. Then for each $p\geq 2$ there is $c_{p} > 0$ such that each $n\in\mathbb{N}$ and any two solutions $\null_{n}Y$ and $Y$ of~\eqref{Sequence VIE} and~\eqref{General VIE}, respectively, satisfy
\begin{align*}
&E\big[\max_{j\in\{0,\dots,k_{n}\}} |\null_{n}Y_{t_{j,n}} - Y_{t_{j,n}}|^{p}\big]/c_{p}\leq |\mathbb{T}_{n}|^{p/2}\big(1 + E\big[\|\null_{n}Y^{r}\|_{\infty}^{p} + \|Y^{r}\|_{\infty}^{p}]\big)\\
&+ E\big[\|\null_{n}Y^{r} - Y^{r}\|_{\infty}^{p} + \|L_{n}(\null_{n}Y)-\null_{n}Y\|_{\infty}^{p} + \|L_{n}(Y) - Y\|_{\infty}^{p}\big]\\
& + E\bigg[\max_{j\in\{0,\dots,k_{n}\}}\bigg|\int_{r}^{t_{j,n}}R(t_{j,n},\underline{s}_{n},\null_{n}Y)\big(\gamma_{n}(s) - 1\big)\,ds\bigg|^{p}\bigg]\\
& + E\bigg[\max_{j\in\{0,\dots,k_{n}\}}\bigg|\int_{r}^{t_{j,n}}\overline{B}(t_{j,n},\underline{s}_{n},\null_{n}Y)\,d(\null_{n}W_{s} - W_{s})\bigg|^{p}\bigg]\\
& + E\bigg[\max_{j\in\{0,\dots,k_{n}\}}\bigg|\int_{r}^{t_{j,n}}\big(\overline{B}(t_{j,n},s,\null_{n}Y)-\overline{B}(t_{j,n},\underline{s}_{n},\null_{n}Y)\big)\null_{n}\dot{W}_{s} - R(t_{j,n},\underline{s}_{n},\null_{n}Y)\gamma_{n}(s)\,ds\bigg|^{p}\bigg].
\end{align*}
\end{Proposition}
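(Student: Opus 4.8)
The plan is to subtract the two equations~\eqref{Sequence VIE} and~\eqref{General VIE} evaluated at the mesh points, to peel off the regular Lipschitz contributions, to reduce the remaining term comparing $\overline{B}\,\null_{n}\dot{W}$ with $\overline{B}\,dW+R$ to three of the retained integrals by inserting the discretised arguments $\overline{B}(t_{j,n},\underline{s}_{n},\null_{n}Y)$ and $R(t_{j,n},\underline{s}_{n},\null_{n}Y)$, and to close by Gronwall's inequality. Evaluating~\eqref{Sequence VIE} and~\eqref{General VIE} at $t=t_{j,n}$ and subtracting yields $\null_{n}Y_{t_{j,n}}-Y_{t_{j,n}}=(\null_{n}Y_{r}-Y_{r})+\mathcal{D}_{j}+\mathcal{A}_{j}$, where $\mathcal{D}_{j}$ collects $\int_{r}^{t_{j,n}}(\underline{B}(t_{j,n},s,\null_{n}Y)-\underline{B}(t_{j,n},s,Y))\,ds$, the analogous term carrying $B_{H}\dot{h}$, and $\int_{r}^{t_{j,n}}(\Sigma(t_{j,n},s,\null_{n}Y)-\Sigma(t_{j,n},s,Y))\,dW_{s}$, while $\mathcal{A}_{j}=\int_{r}^{t_{j,n}}\overline{B}(t_{j,n},s,\null_{n}Y)\,\null_{n}\dot{W}_{s}\,ds-\int_{r}^{t_{j,n}}R(t_{j,n},s,Y)\,ds-\int_{r}^{t_{j,n}}\overline{B}(t_{j,n},s,Y)\,dW_{s}$.

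For the three differences in $\mathcal{D}_{j}$ I would exploit non-anticipativity to bound each one pointwise by $\|\null_{n}Y^{s}-Y^{s}\|_{\infty}$ via the Lipschitz bounds~\eqref{C.7}, use the absolute continuity in the first variable from~\eqref{C.5} to write, e.g., $\underline{B}(t_{j,n},s,\cdot)=\underline{B}(s,s,\cdot)+\int_{s}^{t_{j,n}}\partial_{t}\underline{B}(t,s,\cdot)\,dt$, and then apply the stochastic Fubini theorem of~\cite{VeraarFubini} to recast every forward-looking integral $\int_{r}^{t_{j,n}}F(t_{j,n},s,\cdot)\,dW_{s}$ as $\int_{r}^{t_{j,n}}F(s,s,\cdot)\,dW_{s}+\int_{r}^{t_{j,n}}(\int_{r}^{u}\partial_{u}F(u,s,\cdot)\,dW_{s})\,du$; its two summands are a martingale in $t_{j,n}$ and a Lebesgue integral whose dependence on $t_{j,n}$ sits only in the outer limit, so that the Burkholder--Davis--Gundy inequality~\eqref{Auxiliary Estimate 3} gives $E[\max_{i\le j}|\mathcal{D}_{i}|^{p}]\le c\int_{r}^{t_{j,n}}E[\|\null_{n}Y^{s}-Y^{s}\|_{\infty}^{p}]\,ds$. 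The same mechanism, applied after replacing $Y$ by $\null_{n}Y$ in the $\overline{B}$- and $R$-integrals of $\mathcal{A}_{j}$, controls the corresponding errors by the same quantity, using that $R$ --- being assembled from $\partial_{x}\overline{B}$ and $((1/2)\overline{B}+\Sigma)(s,s,\cdot)$ --- is bounded and $d_{\infty}$-Lipschitz by~\eqref{C.6}--\eqref{C.8}.

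There then remains $\int_{r}^{t_{j,n}}\overline{B}(t_{j,n},s,\null_{n}Y)\,\null_{n}\dot{W}_{s}\,ds-\int_{r}^{t_{j,n}}R(t_{j,n},s,\null_{n}Y)\,ds-\int_{r}^{t_{j,n}}\overline{B}(t_{j,n},s,\null_{n}Y)\,dW_{s}$. Adding and subtracting $\overline{B}(t_{j,n},\underline{s}_{n},\null_{n}Y)$ in both $\overline{B}$-integrals and splitting $R(t_{j,n},\underline{s}_{n},\null_{n}Y)=R(t_{j,n},\underline{s}_{n},\null_{n}Y)\gamma_{n}(s)+R(t_{j,n},\underline{s}_{n},\null_{n}Y)(1-\gamma_{n}(s))$ in the $R$-integral, this becomes the sum of the three retained quantities $\int_{r}^{t_{j,n}}\overline{B}(t_{j,n},\underline{s}_{n},\null_{n}Y)\,d(\null_{n}W_{s}-W_{s})$, $\int_{r}^{t_{j,n}}R(t_{j,n},\underline{s}_{n},\null_{n}Y)(\gamma_{n}(s)-1)\,ds$ and $\int_{r}^{t_{j,n}}[(\overline{B}(t_{j,n},s,\null_{n}Y)-\overline{B}(t_{j,n},\underline{s}_{n},\null_{n}Y))\null_{n}\dot{W}_{s}-R(t_{j,n},\underline{s}_{n},\null_{n}Y)\gamma_{n}(s)]\,ds$, together with the two remainders $-\int_{r}^{t_{j,n}}(R(t_{j,n},s,\null_{n}Y)-R(t_{j,n},\underline{s}_{n},\null_{n}Y))\,ds$ and $-\int_{r}^{t_{j,n}}(\overline{B}(t_{j,n},s,\null_{n}Y)-\overline{B}(t_{j,n},\underline{s}_{n},\null_{n}Y))\,dW_{s}$. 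The two remainders I would bound by $c|\mathbb{T}_{n}|^{p/2}(1+E[\|\null_{n}Y^{r}\|_{\infty}^{p}])$, using the $d_{\infty}$-Lipschitz bounds for $\overline{B}$ and for $R$ in the middle variable (uniformly in the first one, by~\eqref{C.7}--\eqref{C.8}), the crude inequality $|s-\underline{s}_{n}|\le 2|\mathbb{T}_{n}|$, and the increment estimate~\eqref{Sequence L-p Estimate} of Proposition~\ref{Sequence L-p Estimate Proposition} for $E[\|\null_{n}Y^{s}-\null_{n}Y^{\underline{s}_{n}}\|_{\infty}^{p}]$, together with stochastic Fubini and~\eqref{Auxiliary Estimate 3} for the stochastic one.

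Finally I would use $\|\null_{n}Y^{s}-Y^{s}\|_{\infty}\le\max_{i:\,t_{i,n}\le s}|\null_{n}Y_{t_{i,n}}-Y_{t_{i,n}}|+\|L_{n}(\null_{n}Y)-\null_{n}Y\|_{\infty}+\|L_{n}(Y)-Y\|_{\infty}$, which holds since $L_{n}(\null_{n}Y)-L_{n}(Y)$ is piecewise affine with nodal values $\null_{n}Y_{t_{i,n}}-Y_{t_{i,n}}$. Assembling the three previous steps, taking $\max_{i\le j}$ and expectations, one obtains $g(t)\le M_{n}+c\int_{r}^{t}g(s)\,ds$ for all $t\in[r,T]$ with $g(t):=E[\max_{i:\,t_{i,n}\le t}|\null_{n}Y_{t_{i,n}}-Y_{t_{i,n}}|^{p}]$ and $M_{n}$ equal to $c_{p}$ times the five displayed quantities of the statement; Gronwall's inequality then gives $g(T)\le M_{n}e^{c(T-r)}$, which is the assertion. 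The main obstacle is the bookkeeping of the third step: every forward-looking stochastic integral must first be brought, through stochastic Fubini, into a form in which $\max_{j}$ commutes with the moment bound; the retained term $\int_{r}^{t_{j,n}}\overline{B}(t_{j,n},\underline{s}_{n},\null_{n}Y)\,d(\null_{n}W_{s}-W_{s})$ has to be left as it stands, its true order being only $|\mathbb{T}_{n}|^{p/2-1}$ and hence absorbed only later, via Proposition~\ref{Auxiliary Proposition}, whereas every other error must come out genuinely of order $|\mathbb{T}_{n}|^{p/2}$; a secondary nuisance is that~\eqref{C.7}--\eqref{C.8} constrain $R$ only in the region $s<t<u$, so its regularity on the diagonal and at the right endpoint $T$ must be recovered by a continuity argument.
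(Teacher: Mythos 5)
Your proposal is correct and follows essentially the same route as the paper: Gronwall's inequality applied to the mesh-point error $E[\max_{i:\,t_{i,n}\le t}|\null_{n}Y_{t_{i,n}}-Y_{t_{i,n}}|^{p}]$, the linear-interpolation bound $\|\null_{n}Y^{s}-Y^{s}\|_{\infty}\le\|\null_{n}Y^{r}-Y^{r}\|_{\infty}\vee\max_{i:\,t_{i,n}\le s}|\null_{n}Y_{t_{i,n}}-Y_{t_{i,n}}|+\|L_{n}(\null_{n}Y)-\null_{n}Y\|_{\infty}+\|L_{n}(Y)-Y\|_{\infty}$, stochastic Fubini to rewrite every forward-looking integral so that $\max_{j}$ commutes with the moment bound, and the same three retained remainders. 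The only cosmetic difference is that you telescope $R(t,s,Y)\mapsto R(t,s,\null_{n}Y)\mapsto R(t,\underline{s}_{n},\null_{n}Y)$ (and likewise for $\overline{B}$) in two separate steps producing two extra intermediate remainders, whereas the paper compares $R(t,s,Y)$ with $R(t,\underline{s}_{n},\null_{n}Y)$ in a single step via the combined $d_{\infty}$-Lipschitz bound; both versions land the extra errors, via Proposition~\ref{Sequence L-p Estimate Proposition} and Corollary~\ref{Sequence L-p Estimate Corollary}, inside the same $|\mathbb{T}_{n}|^{p/2}\bigl(1+E[\|\null_{n}Y^{r}\|_{\infty}^{p}+\|Y^{r}\|_{\infty}^{p}]\bigr)$ term.
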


\begin{proof}
We suppose that $E[\|\null_{n}Y^{r}\|_{\infty}^{p}]$ and $E[\|Y^{r}\|_{\infty}^{p}]$ are finite and aim to derive the estimate by applying Gronwall's inequality to the increasing function $\varphi_{n}:[r,T]\rightarrow\mathbb{R}_{+}$ given by
\[
\varphi_{n}(t):=E\big[\max_{j\in\{0,\dots,k_{n}\}:\,t_{j,n}\leq t} |\null_{n}Y_{t_{j,n}} - Y_{t_{j,n}}|^{p}\big].
\]
To this end, let us write the difference of $\null_{n}Y$ and $Y$ as follows:
\begin{align*}
\null_{n}Y_{t} - Y_{t} &= \null_{n}Y_{r} - Y_{r} + \int_{r}^{t}\underline{B}(s,s,\null_{n}Y) - \underline{B}(s,s,Y)\,ds\\
&\quad + \int_{r}^{t}B_{H}(s,s,\null_{n}Y) - B_{H}(s,s,Y)\,dh(s)\\
&\quad + \null_{n}\Delta_{t} + \int_{r}^{t}\int_{r}^{v}\partial_{v}\underline{B}(v,u,\null_{n}Y)-\partial_{v}\underline{B}(v,u,Y)\,du\,dv\\
&\quad + \int_{r}^{t}\int_{r}^{v}\partial_{v}B_{H}(v,u,\null_{n}Y)-\partial_{v}B_{H}(v,u,Y)\,dh(u)\,dv\\
&\quad + \int_{r}^{t}\Sigma(s,s,\null_{n}Y) - \Sigma(s,s,Y)\,dW_{s}\\
&\quad + \int_{r}^{t}\int_{r}^{v}\partial_{v}\Sigma(v,u,\null_{n}Y) -\partial_{v}\Sigma(v,u,Y)\,dW_{u}\,dv
\end{align*}
for each $t\in [r,T]$ a.s. with a process $\null_{n}\Delta\in\mathscr{C}([0,T],\mathbb{R}^{m})$ satisfying
\[
\null_{n}\Delta_{t} = \int_{r}^{t}\overline{B}(t,s,\null_{n}Y)\null_{n}\dot{W}_{s} - R(t,s,Y)\,ds - \int_{r}^{t}\overline{B}(t,s,Y)\,dW_{s}
\]
for any $t\in [r,T]$ a.s. So, we let the  terms $\null_{n}Y_{r} - Y_{r}$ and $\null_{n}\Delta$ unchanged, then for the constant $c_{p,1}:=15^{p-1}(1+T-r)^{p}(T-r)^{p/2-1}((T-r)^{p/2} + \|h\|_{1,2,r}^{p} + w_{p})\lambda^{p}$ we have
\begin{equation}\label{Main Remainder Estimate 1}
\varphi_{n}(t)^{1/p} \leq \delta_{n,1}^{1/p} + \delta_{n}(t)^{1/p} + \bigg(c_{p,1}\int_{r}^{t_{n}}\delta_{n,1} + \delta_{n,2}(s) + \varepsilon_{n}(s) + \varphi_{n}(s)\,ds\bigg)^{1/p} 
\end{equation}
for all $t\in [r,T]$, where we have set $\delta_{n,1}:=E[\|\null_{n}Y^{r} - Y^{r}\|_{\infty}^{p}]$ and the measurable functions $\delta_{n},\delta_{n,2},\varepsilon_{n}:[r,T]\rightarrow\mathbb{R}_{+}$ are defined by
\begin{align*}
\delta_{n}(t)&:=E\big[\max_{j\in\{0,\dots,k_{n}\}:\,t_{j,n}\leq t} |\null_{n}\Delta_{t_{j,n}}|^{p}\big],\\
\delta_{n,2}(s)&:=E\big[\|L_{n}(\null_{n}Y)^{\underline{s}_{n}} - \null_{n}Y^{\underline{s}_{n}}\|_{\infty}^{p} + \|L_{n}(Y)^{\underline{s}_{n}} - Y^{\underline{s}_{n}}\|_{\infty}^{p}\big]\quad\text{and}\\
\varepsilon_{n}(s)&:= E\big[\|\null_{n}Y^{s} - \null_{n}Y^{\underline{s}_{n}}\|_{\infty}^{p} + \|Y^{s} - Y^{\underline{s}_{n}}\|_{\infty}^{p}\big].
\end{align*}
To obtain the estimate~\eqref{Main Remainder Estimate 1}, we used the chain of inequalities: $E[\|L_{n}(\null_{n}Y)^{\underline{s}_{n}} - L_{n}(Y)^{\underline{s}_{n}}\|_{\infty}^{p}]$ $\leq E[\|\null_{n}Y^{r} - Y^{r}\|_{\infty}^{p}\vee \max_{j\in\{0,\dots,k_{n}\}:\,t_{j,n}\leq s} |\null_{n}Y_{t_{j,n}} - Y_{t_{j,n}}|^{p}] \leq \delta_{n,1} + \varphi_{n}(s)$, valid for every $s\in [r,T]$.

For the estimation of $\delta_{n}$ let us define two processes $\null_{n,3}\Delta, \null_{n,5}\Delta\in\mathscr{C}([0,T],\mathbb{R}^{m})$ by $\null_{n,3}\Delta_{t} := \int_{r}^{t}R(t,\underline{s}_{n},\null_{n}Y)\big(\gamma_{n}(s) -1\big)\,ds$ and
\[
\null_{n,5}\Delta_{t}:=\int_{r}^{t}\big(\overline{B}(t,s,\null_{n}Y) - \overline{B}(t,\underline{s}_{n},\null_{n}Y)\big)\null_{n}\dot{W}_{s} - R(t,\underline{s}_{n},\null_{n}Y)\gamma_{n}(s)\,ds
\]
and choose $\null_{n,4}\Delta\in\mathscr{C}([0,T],\mathbb{R}^{m})$ such that $\null_{n,4}\Delta_{t} = \int_{r}^{t}\overline{B}(t,\underline{s}_{n},\null_{n}Y)\,d(\null_{n}W_{s} - W_{s})$ for any $t\in [r,T]$ a.s. Then $\null_{n}\Delta$ admits the following representation:
\begin{align*}
\null_{n}\Delta_{t} &= \null_{n,3}\Delta_{t} + \null_{n,4}\Delta_{t} + \null_{n,5}\Delta_{t} + \int_{r}^{t}R(t,\underline{s}_{n},\null_{n}Y) - R(t,s,Y)\,ds\\
&\quad + \int_{r}^{t}\overline{B}(s,\underline{s}_{n},\null_{n}Y) - \overline{B}(s,s,Y)\,dW_{s} + \int_{r}^{t}\int_{r}^{u}\partial_{u}\overline{B}(u,\underline{s}_{n},\null_{n}Y) - \partial_{u}\overline{B}(u,s,Y)\,dW_{s}\,du
\end{align*}
for all $t\in [r,T]$ a.s. Due to the assumptions, we may assume without loss of generality that the Lipschitz constant $\lambda$ is large enough such that
\begin{equation*}
|R(u,t,x) - R(u,s,y)|\leq \lambda d_{\infty}((t,x),(s,y))
\end{equation*}
for any $s,t,u\in [r,T)$ with $s < t < u$ and every $x,y\in C([0,T],\mathbb{R}^{m})$. Thus, for the constant $c_{p,2}:=10^{p-1}(1 + T-r)^{p}(T-r)^{p/2-1}((T-r)^{p/2} + w_{p})\lambda^{p}$ we get that
\begin{equation}\label{Main Remainder Estimate 2}
\begin{split}
\delta_{n}(t)^{1/p}&\leq \delta_{n,3}(t)^{1/p} + \delta_{n,4}(t)^{1/p} + \delta_{n,5}(t)^{1/p}\\
&\quad + \bigg(c_{p,2}\int_{r}^{t_{n}}\delta_{n,1} + (s-\underline{s}_{n})^{p/2} + \delta_{n,2}(s) + \varepsilon_{n}(s) + \varphi_{n}(s)\,ds\bigg)^{1/p}
\end{split}
\end{equation}
for every $t\in [r,T]$, where the increasing function $\delta_{n,i}:[r,T]\rightarrow\mathbb{R}_{+}$ is given through
\begin{equation*}
\delta_{n,i}(t):= E\big[\max_{j\in\{0,\dots,k_{n}\}:\,t_{j,n}\leq t} |\null_{n,i}\Delta_{t_{j,n}}|^{p}\big]\quad\text{for all $i\in\{3,4,5\}$.}
\end{equation*}

Thanks to Proposition~\ref{Sequence L-p Estimate Proposition} and Corollary~\ref{Sequence L-p Estimate Corollary}, there are $\underline{c}_{p},\overline{c}_{p} > 0$ such that~\eqref{Sequence L-p Estimate} and~\eqref{Sequence L-p Estimate 2} hold when $c_{p}$ is replaced by $\underline{c}_{p}$ and $\overline{c}_{p}$, respectively. By combining~\eqref{Main Remainder Estimate 1} with~\eqref{Main Remainder Estimate 2}, we see that
\begin{align*}
\varphi_{n}(t)&\leq c_{p,4}|\mathbb{T}_{n}|^{p/2}\big(1 + E[\|\null_{n}Y^{r}\|_{\infty}^{p} + \|Y^{r}\|_{\infty}^{p}\big]\big) +(5^{p-1} + c_{p,3}(T-r))\delta_{n,1}\\
&\quad + 5^{p-1}\big(\delta_{n,3}(t) + \delta_{n,4}(t) + \delta_{n,5}(t)\big) + c_{p,3}\int_{r}^{t_{n}}\delta_{n,2}(s) + \varphi_{n}(s)\,ds
\end{align*}
for fixed $t\in [r,T]$, where $c_{p,3}:=10^{p-1}(c_{p,1} + c_{p,2})$ and $c_{p,4}:=2^{p/2}(T-r)(1 + \underline{c}_{p} + \overline{c}_{p})c_{p,3}$. For this reason, Gronwall's inequality gives
\[
\varphi_{n}(t)/c_{p}\leq |\mathbb{T}_n|^{p/2}\big(1 + E[\|\null_{n}Y^{r}\|_{\infty}^{p} + \|Y^{r}\|_{\infty}^{p}]\big) + \delta_{n,1} + \sum_{i=2}^{5}\delta_{n,i}(t)
\]
with $c_{p}:=e^{c_{p,3}(T-r)}(5^{p-1} + c_{p,4})$, which implies the desired estimate.
\end{proof}

By the estimate~\eqref{Auxiliary Estimate 1}, Lemma~\ref{Auxiliary Lemma 2} and Proposition~\ref{Auxiliary Proposition}, to prove~\eqref{General Partition Limit}, only the last remainder in the estimation of Proposition~\ref{Main Decomposition Proposition} should be investigated in more detail. Thus, let $\Phi_{h,n}:[r,T]\times C([0,T],\mathbb{R}^{m})\times C([0,T],\mathbb{R}^{d})\rightarrow\mathbb{R}^{m}$ be defined via
\begin{align*}
\Phi_{h,n}(s,y,w)&:=B_{H}(\underline{s}_{n},\underline{s}_{n},y)(h(s) - h(\underline{s}_{n})) + \overline{B}(\underline{s}_{n},\underline{s}_{n},y)\big(L_{n}(w)(s) - L_{n}(w)(\underline{s}_{n})\big)\\
&\quad + \Sigma(\underline{s}_{n},\underline{s}_{n},y)(w(s) - w(\underline{s}_{n})) + \int_{\underline{s}_{n}}^{s}\int_{r}^{\underline{s}_{n}}\partial_{v}\overline{B}(v,u,y)\,dL_{n}(w)(u)\,dv
\end{align*}
for each $h\in W_{r}^{1,2}([0,T],\mathbb{R}^{d})$ and any $n\in\mathbb{N}$. Whenever $\null_{n}Y$ is a solution to~\eqref{Sequence VIE}, then we will utilize the following decomposition to deal with the considered remainder:
\begin{equation}\label{Main Remainder Decomposition}
\begin{split}
&\big(\overline{B}(t_{j,n},s,\null_{n}Y) - \overline{B}(t_{j,n},\underline{s}_{n},\null_{n}Y)\big)\null_{n}\dot{W}_{s} - R(t_{j,n},\underline{s}_{n},\null_{n}Y)\gamma_{n}(s)\\
&= \big(\overline{B}(t_{j,n},s,\null_{n}Y) - \overline{B}(t_{j,n},\underline{s}_{n},\null_{n}Y) - \partial_{x}\overline{B}(t_{j,n},\underline{s}_{n},\null_{n}Y)(\null_{n}Y_{s} - \null_{n}Y_{\underline{s}_{n}})\big)\null_{n}\dot{W}_{s}\\
&\quad + \partial_{x}\overline{B}(t_{j,n},\underline{s}_{n},\null_{n}Y)\big(\null_{n}Y_{s} - \null_{n}Y_{\underline{s}_{n}} - \Phi_{h,n}(s,\null_{n}Y,W)\big)\null_{n}\dot{W}_{s}\\
&\quad + \partial_{x}\overline{B}(t_{j,n},\underline{s}_{n},\null_{n}Y)\Phi_{h,n}(s,\null_{n}Y,W)\null_{n}\dot{W}_{s} - R(t_{j,n},\underline{s}_{n},\null_{n}Y)\gamma_{n}(s)
\end{split}
\end{equation}
for all $j\in\{1,\dots,k_{n}\}$ and each $s\in [r,t_{j,n})$.

\subsection{Moment estimates for the first two remainders}\label{Section 4.2}

The first result in this section together with Lemma~\ref{Auxiliary Lemma 1} provide an estimate of the first remainder appearing in~\eqref{Main Remainder Decomposition}.

\begin{Proposition}\label{Remainder Proposition 1}
Let \eqref{C.4}-\eqref{C.6} be valid, $h\in W_{r}^{1,2}([0,T],\mathbb{R}^{d})$ and $F$ be a product measurable functional on $[r,T]\times [r,T)\times C([0,T],\mathbb{R}^{m})$ so that the following two conditions hold:
\begin{enumerate}[(i)]
\item There exists $\lambda\geq 0$ such that $|\overline{B}(u,t,x) - \overline{B}(u,s,x)| + |\partial_{u}\overline{B}(u,t,x) - \partial_{u}\overline{B}(u,s,x)|$ $\leq\lambda d_{\infty}((t,x),(s,x))$ for any $s,t,u\in [r,T)$ with $s < t < u$ and all $x\in C([0,T],\mathbb{R}^{m})$.

\item The functional $[r,t)\times C([0,T],\mathbb{R}^{m})\rightarrow\mathbb{R}$, $(s,x)\mapsto F(t,s,x)$ is of class $\mathbb{C}^{1,2}$ for any $t\in (r,T]$ and there are $c_{0},\eta,\lambda_{0}\geq 0$ such that
\begin{align*}
|\partial_{s}F(t,s,x)| + |\partial_{xx}F(t,s,x)|&\leq c_{0}(1 + \|x\|_{\infty}^{\eta}),\\
|\partial_{x}F(u,t,x) - \partial_{x}F(u,s,x)|&\leq \lambda_{0} d_{\infty}((t,x),(s,x))
\end{align*}
for each $s,t,u\in [r,T)$ with $s < t < u$ and all $x\in C([0,T],\mathbb{R}^{m})$.
\end{enumerate}
Then for any $p\geq 2$ there is $c_{p} > 0$ such that for all $n\in\mathbb{N}$ and each solution $\null_{n}Y$ to~\eqref{Sequence VIE},
\begin{align*}
E\bigg[\max_{j\in\{1,\dots,k_{n}\}}\int_{r}^{t_{j,n}}\big|F(t_{j,n},s,\null_{n}Y) &- F(t_{j,n},\underline{s}_{n},\null_{n}Y) - \partial_{x}F(t_{j,n},\underline{s}_{n},\null_{n}Y)(\null_{n}Y_{s} - \null_{n}Y_{\underline{s}_{n}})\big|^{p}\,ds\bigg]\\
&\leq c_{p}|\mathbb{T}_{n}|^{p-1}\big(1 + E\big[\|\null_{n}Y^{r}\|_{\infty}^{(\eta\vee 2) p}\big]\big).
\end{align*}
\end{Proposition}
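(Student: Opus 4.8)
The quantity to be estimated is the second-order Taylor remainder of the map $x\mapsto F(t_{j,n},\cdot,x)$ along the increment $\null_{n}Y_{s}-\null_{n}Y_{\underline{s}_{n}}$, integrated against $ds$ and maximized over the partition points. The plan is to freeze the first time variable at $t=t_{j,n}$, regard $G(s,x):=F(t,s,x)$ as a $\mathbb{C}^{1,2}$ non-anticipative functional by hypothesis (ii), and apply the functional It\^o formula from \cite{ContFournieFuncIto} to the semimartingale $\null_{n}Y$ on the interval $[\underline{s}_{n},s]$. Since $\null_{n}Y$ solves \eqref{Sequence VIE}, its canonical decomposition has a drift built from $\underline{B}$, $B_H\dot h$, $\overline{B}\,\null_{n}\dot W$ and the time-derivative (Volterra) terms, all bounded in $L^p$ moments uniformly in $n$ by Proposition~\ref{Sequence L-p Estimate Proposition} (applicable because of (i)), and a martingale part $\int\Sigma\,dW$. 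Writing
\[
F(t,s,\null_{n}Y)-F(t,\underline{s}_{n},\null_{n}Y)-\partial_{x}F(t,\underline{s}_{n},\null_{n}Y)(\null_{n}Y_{s}-\null_{n}Y_{\underline{s}_{n}})
\]
via the functional It\^o formula produces: a horizontal term $\int_{\underline{s}_{n}}^{s}\partial_{u}F(t,u,\null_{n}Y)\,du$; a first-order vertical term against the increment that we must compare with $\partial_{x}F(t,\underline{s}_{n},\cdot)$ evaluated at the \emph{left} endpoint, hence a difference $\int_{\underline{s}_{n}}^{s}(\partial_{x}F(t,u,\null_{n}Y)-\partial_{x}F(t,\underline{s}_{n},\null_{n}Y))\,d\null_{n}Y_{u}$; and the second-order vertical term $\tfrac12\int_{\underline{s}_{n}}^{s}\partial_{xx}F(t,u,\null_{n}Y)\,d[\null_{n}Y]_{u}$.

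Next I would estimate each of the three pieces in $L^p$, using that $s-\underline{s}_{n}\le|\mathbb{T}_{n}|$ and summing/maximizing over $j$. The horizontal piece is controlled by $|\partial_{u}F|\le c_0(1+\|\cdot\|_\infty^\eta)$ from (ii) and Proposition~\ref{Sequence L-p Estimate Proposition}, giving a factor $|\mathbb{T}_{n}|^{p}$ for the $p$-th moment of the length-$|\mathbb{T}_{n}|$ integral, hence better than $|\mathbb{T}_{n}|^{p-1}$. The quadratic-variation piece uses $|\partial_{xx}F|\le c_0(1+\|\cdot\|_\infty^\eta)$ together with $d[\null_{n}Y]_{u}=\Sigma\Sigma'(u,u,\null_{n}Y)\,du$ (the bounded-variation and the $\null_{n}W$-driven parts of $\null_{n}Y$ are themselves absolutely continuous, so only the genuine $dW$ part contributes to $[\null_{n}Y]$), and $\Sigma$ is bounded by \eqref{C.6}; again this gives $|\mathbb{T}_{n}|^{p}$. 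The delicate piece is the first-order vertical term, where I split $d\null_{n}Y_{u}$ into its absolutely-continuous part and $\Sigma\,dW_u$: the drift part is bounded by H\"older's inequality using $|\partial_{x}F(t,u,\cdot)-\partial_{x}F(t,\underline{s}_{n},\cdot)|\le\lambda_0 d_\infty((u,\cdot),(\underline{s}_{n},\cdot))\le\lambda_0(|u-\underline{s}_{n}|^{1/2}+\|\null_{n}Y^u-\null_{n}Y^{\underline{s}_{n}}\|_\infty)$, and the path-increment is bounded in $L^p$ by $|\mathbb{T}_{n}|^{1/2}$-order via Proposition~\ref{Sequence L-p Estimate Proposition}; combined with the $\null_{n}\dot W$ factors in the drift of $\null_{n}Y$ this is where \eqref{C.4} ($k_n|\mathbb{T}_n|\le\overline c_{\mathbb T}$) enters, exactly as flagged in the text, to absorb a factor $|\mathbb{T}_n|^{-1/2}$ coming from $\null_{n}\dot W$ (here one uses \eqref{Auxiliary Estimate 2} with the maximum over $j$ turned into a sum of $k_n$ terms). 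The $dW$-martingale part is handled by the Burkholder–Davis–Gundy bound \eqref{Auxiliary Estimate 3}: $E[\sup_{v}|\int_{\underline{s}_n}^{v}(\partial_x F(t,u,\null_{n}Y)-\partial_x F(t,\underline{s}_n,\null_{n}Y))\Sigma\,dW_u|^{p}]\le w_p|\mathbb{T}_n|^{p/2-1}\int_{\underline{s}_n}^{s}E[\,\cdot\,]\,du$, and since the integrand is again $O(|\mathbb{T}_n|^{p/2})$ in moment, the product is $O(|\mathbb{T}_n|^{p-1})$, which is the target rate.

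I expect the main obstacle to be bookkeeping the uniformity in $n$ across the $k_n$ summands when passing from $\max_j$ to a sum — each local ($[\underline{s}_{n},s]$-length) contribution is $O(|\mathbb{T}_{n}|^{p})$ but there are order $k_n\sim|\mathbb{T}_n|^{-1}$ of them, so \eqref{C.4} (or at worst \eqref{Partition Condition} plus $\sum\Delta t\le T-r$) is precisely what converts this to $O(|\mathbb{T}_n|^{p-1})$; and, more technically, justifying the application of the functional It\^o formula to $G(s,x)=F(t,s,x)$ requires knowing that the restricted derivatives $\partial_s F,\partial_x F,\partial_{xx}F$ are independent of the chosen \cadlag extension and that $\null_{n}Y$, as a solution of \eqref{Sequence VIE}, is a semimartingale with the stated decomposition — the latter is exactly the content of Section~\ref{Section 2.2} via the stochastic Fubini theorem, which I would invoke to rewrite the Volterra terms as an SDE before differentiating. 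Once these structural points are in place, collecting the three estimates with a constant $c_p$ depending only on $p,T-r,\lambda,\lambda_0,c_0,c,\overline c_{\mathbb T},c_{\mathbb T}$ and the moment bound constant from Proposition~\ref{Sequence L-p Estimate Proposition} yields the claim, with the $\eta\vee 2$ exponent on $E[\|\null_{n}Y^{r}\|_\infty^{\cdot}]$ arising from the worst of the $\|\cdot\|_\infty^\eta$ factors in (ii) and the $\|\cdot\|_\infty^{2}$-type factors produced by squaring linear growth bounds inside the $L^p$ estimates.
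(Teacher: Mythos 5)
Your approach matches the paper's proof: you apply the functional It\^o formula to the frozen-$t$ functional $(s,x)\mapsto F(t_{j,n},s,x)$ on $[\underline{s}_{n},s]$ using the semimartingale form of $\null_{n}Y$ obtained via stochastic Fubini, split the first-order vertical increment into drift and martingale contributions, and estimate each piece through Proposition~\ref{Sequence L-p Estimate Proposition}, the BDG bound~\eqref{Auxiliary Estimate 3}, and the $d_{\infty}$-Lipschitz bound on $\partial_{x}F$ from hypothesis (ii), exactly as the paper does. One small correction to your bookkeeping: condition~\eqref{C.4} is \emph{not} used to absorb the $|\mathbb{T}_{n}|^{-1/2}$ per-step factor from $\null_{n}\dot{W}$ (the $d_{\infty}$-Lipschitz bound $\lambda_{0}d_{\infty}((s,\null_{n}Y),(\underline{s}_{n},\null_{n}Y))\sim|\mathbb{T}_{n}|^{1/2}$ already compensates it after Cauchy--Schwarz with~\eqref{Auxiliary Estimate 2}, giving $O(|\mathbb{T}_{n}|^{p})$ \emph{uniformly} in $j$ without any summation); \eqref{C.4} enters only for the genuine martingale term $\int_{\underline{s}_{n}}^{s}\big(\partial_{x}F(t_{j,n},u,\null_{n}Y)-\partial_{x}F(t_{j,n},\underline{s}_{n},\null_{n}Y)\big)\Sigma(u,u,\null_{n}Y)\,dW_{u}$, where no $j$-uniform pathwise bound on the stochastic integral is available, $\max_{j}$ must be replaced by $\sum_{j}$, and $k_{n}|\mathbb{T}_{n}|\leq\overline{c}_{\mathbb{T}}$ turns $k_{n}\cdot O(|\mathbb{T}_{n}|^{p})$ into the target $O(|\mathbb{T}_{n}|^{p-1})$.
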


\begin{proof}
For any $j\in\{1,\dots,k_{n}\}$ let the product measurable map $\null_{n,j}\Delta:[r,t_{j,n})^{2}\times\Omega\rightarrow\mathbb{R}^{1\times m}$ be given by $\null_{n,j}\Delta_{s,u}:=\partial_{x}F(t_{j,n},u,\null_{n}Y) - \partial_{x}F(t_{j,n},\underline{s}_{n},\null_{n}Y)$, if $u\in [\underline{s}_{n},s]$, and $\null_{n,j}\Delta_{s,u}:=0$, otherwise. Then from the functional It{\^o} formula in~\cite{ContFournieFuncIto} we infer that
\begin{equation}\label{Ito Formula Decomposition}
\begin{split}
&F(t_{j,n},s,\null_{n}Y) - F(t_{j,n},\underline{s}_{n},\null_{n}Y) - \partial_{x}F(t_{j,n},\underline{s}_{n},\null_{n}Y)(\null_{n}Y_{s} - \null_{n}Y_{\underline{s}_{n}})\\
&= \int_{\underline{s}_{n}}^{s}\partial_{u}F(t_{j,n},u,\null_{n}Y) + \frac{1}{2}\mathrm{tr}(\partial_{xx}F(t_{j,n},u,\null_{n}Y)(\Sigma\Sigma')(u,u,\null_{n}Y))\,du\\
&\quad + \int_{\underline{s}_{n}}^{s}\null_{n,j}\Delta_{s,u}\big(\underline{B}(u,u,\null_{n}Y) + B_{H}(u,u,\null_{n}Y)\dot{h}(u) + \overline{B}(u,u,\null_{n}Y)\null_{n}\dot{W}_{u}\big)\,du\\
& + \int_{\underline{s}_{n}}^{s}\null_{n,j}\Delta_{s,v}\int_{r}^{v}\partial_{v}\underline{B}(v,u,\null_{n}Y) + \partial_{v}B_{H}(v,u,\null_{n}Y)\dot{h}(u) + \partial_{v}\overline{B}(v,u,\null_{n}Y)\null_{n}\dot{W}_{u}\,du\,dv\\
& + \int_{\underline{s}_{n}}^{s}\null_{n,j}\Delta_{s,u}\Sigma(u,u,\null_{n}Y)\,dW_{u} + \int_{\underline{s}_{n}}^{s}\null_{n,j}\Delta_{s,v}\int_{r}^{v}\partial_{v}\Sigma(v,u,\null_{n}Y)\,dW_{u}\,dv
\end{split}
\end{equation}
for each $s\in [r,t_{j,n})$ a.s. Now, for $\overline{\eta}:=\eta\vee 2$ Proposition~\ref{Sequence L-p Estimate Proposition} gives a constant $\underline{c}_{\overline{\eta}p} > 0$ such that~\eqref{Sequence L-p Estimate} holds when $p$ and $c_{p}$ are replaced by $\overline{\eta}p$ and $\underline{c}_{\overline{\eta}p}$, respectively. Then for the first two terms on the right-hand side in~\eqref{Ito Formula Decomposition} we have
\begin{align*}
&E\bigg[\max_{j\in\{1,\dots,k_{n}\}}\sup_{s\in [r,t_{j,n})}\bigg|\int_{\underline{s}_{n}}^{s}\partial_{u}F(t_{j,n},u,\null_{n}Y) + \frac{1}{2}\mathrm{tr}(\partial_{xx}F(t_{j,n},u,\null_{n}Y)(\Sigma\Sigma')(u,u,\null_{n}Y))\,du\bigg|^{p}\bigg]\\
&\leq 2^{p-1}c_{0}^{p}(s-\underline{s}_{n})^{p}E[(1 + \|\null_{n}Y\|_{\infty}^{\eta})^{p}]\\
&\quad + 2^{-1}c_{0}^{p}(s-\underline{s}_{n})^{p-1}\int_{\underline{s}_{n}}^{s}E\big[(1 + \|\null_{n}Y^{u}\|_{\infty}^{\eta})^{p}|(\Sigma\Sigma')(u,u,\null_{n}Y)|^{p}\big]\,du\\
&\leq c_{p,1}|\mathbb{T}_{n}|^{p}\big(1 + E\big[\|\null_{n}Y^{r}\|_{\infty}^{\overline{\eta}p}\big]\big)^{\eta/\overline{\eta}}
\end{align*}
with $c_{p,1}:=2^{2p-1}c_{0}^{p}(2^{p} + c^{2p})(1 + \underline{c}_{\overline{\eta} p})^{\eta/\overline{\eta}}$. We note that $|\null_{n,j}\Delta_{s,u}|\leq \lambda_{0}d_{\infty}((s,\null_{n}Y),(\underline{s}_{n},\null_{n}Y))$ for each $j\in\{1,\dots,k_{n}\}$ and all $s,u\in [r,t_{j,n})$ and by setting $\overline{c}_{p}:=2^{3p/2}\lambda_{0}^{p}(1 + \underline{c}_{\overline{\eta}p})^{1/\overline{\eta}}$, we obtain that
\begin{equation*}
\lambda_{0}^{p}\big(E\big[d_{\infty}((s,\null_{n}Y),(\underline{s}_{n},\null_{n}Y))^{2p}\big]\big)^{1/2}\leq \overline{c}_{p}|\mathbb{T}_{n}|^{p/2}\big(1 + E\big[\|\null_{n}Y^{r}\|_{\infty}^{\overline{\eta}p}\big]\big)^{1/\overline{\eta}}
\end{equation*}
for each $s\in [r,T]$. Consequently, the Cauchy-Schwarz inequality gives us the following bound for the third and sixth expression in the decomposition~\eqref{Ito Formula Decomposition}:
\begin{align*}
&E\bigg[\max_{j\in\{1,\dots,k_{n}\}}\int_{r}^{t_{j,n}}\bigg|\int_{\underline{s}_{n}}^{s}\null_{n,j}\Delta_{s,v}\bigg(\underline{B}(v,v,\null_{n}Y) + \int_{r}^{v}\partial_{v}\underline{B}(v,u,\null_{n}Y)\,du\bigg)\,dv\bigg|^{p}\,ds\bigg]\\
&\leq 2^{p-1}c^{p}\int_{r}^{T}(s-\underline{s}_{n})^{p}\lambda_{0}^{p}E\big[d_{\infty}((s,\null_{n}Y),(\underline{s}_{n},\null_{n}Y))^{p}(1 + \|\null_{n}Y^{s}\|_{\infty}^{\kappa})^{p}\big]\,ds\\
&\quad + 2^{p-1}c^{p}\int_{r}^{T}\lambda_{0}^{p}E\bigg[d_{\infty}((s,\null_{n}Y),(\underline{s}_{n},\null_{n}Y))^{p}(s-\underline{s}_{n})^{p-1}\int_{\underline{s}_{n}}^{s}\bigg(\int_{r}^{v}1 + \|\null_{n}Y^{u}\|_{\infty}^{\kappa}\,du\bigg)^{p}\,dv\bigg]\,ds\\
&\leq c_{p,2}|\mathbb{T}_{n}|^{p}\big(1 + E[\|\null_{n}Y^{r}\|_{\infty}^{\overline{\eta}p}\big)^{2/\overline{\eta}}\int_{r}^{T}(s-\underline{s}_{n})^{p/2}\,ds
\end{align*}
for $c_{p,2}:=2^{5p/2-1}(1 + (T-r)^{p})c^{p}(1 + \underline{c}_{\overline{\eta}p})^{1/\overline{\eta}}\overline{c}_{p}$. For the fourth expression we apply the Cauchy-Schwarz inequality twice, which entails that
\begin{align*}
&E\bigg[\max_{j\in\{1,\dots,k_{n}\}}\int_{r}^{t_{j,n}}\bigg|\int_{\underline{s}_{n}}^{s}\null_{n,j}\Delta_{s,u}B_{H}(u,u,\null_{n}Y)\,dh(u)\bigg|^{p}\,ds\bigg]\\
&\leq \|h\|_{1,2,r}^{p}c^{p}\int_{r}^{T}(s-\underline{s}_{n})^{p/2}\lambda_{0}^{p}E\big[d_{\infty}((s,\null_{n}Y),(\underline{s}_{n},\null_{n}Y))^{p}(1 + \|\null_{n}Y\|_{\infty}^{\kappa})^{p}\big]\,ds\\
&\leq c_{p,3}|\mathbb{T}_{n}|^{p}\big(1 + E\big[\|\null_{n}Y^{r}\|_{\infty}^{\overline{\eta}p}\big]\big)^{2/\overline{\eta}},
\end{align*}
where $c_{p,3}:=2^{3p/2}\|h\|_{1,2,r}^{p}c^{p}(1 + \underline{c}_{\overline{\eta}p})^{1/\overline{\eta}}\overline{c}_{p}$. Proceeding similarly, it follows for the seventh expression that
\begin{align*}
&E\bigg[\max_{j\in\{1,\dots,k_{n}\}}\int_{r}^{t_{j,n}}\bigg|\int_{\underline{s}_{n}}^{s}\null_{n,j}\Delta_{s,v}\int_{r}^{v}\partial_{v}B_{H}(v,u,\null_{n}Y)\,dh(u)\,dv\bigg|^{p}\,ds\bigg]\\
&\leq \int_{r}^{T}(s-\underline{s}_{n})^{p-1}\int_{\underline{s}_{n}}^{s}\lambda_{0}^{p}E\bigg[d_{\infty}((s,\null_{n}Y),(\underline{s}_{n},\null_{n}Y))^{p}\bigg|\int_{r}^{v}\partial_{v}B_{H}(v,u,\null_{n}Y)\,dh(u)\bigg|^{p}\bigg]\,dv\,ds\\
&\leq c_{p,4}|\mathbb{T}_{n}|^{p}\big(1 + E\big[\|\null_{n}Y^{r}\|_{\infty}^{\overline{\eta}p}\big]\big)^{2/\overline{\eta}}\int_{r}^{T}(s-\underline{s}_{n})^{p/2}\,ds
\end{align*}
with $c_{p,4}:=(T-r)^{p/2}c_{p,3}$. We turn to the fifth and eight term in~\eqref{Ito Formula Decomposition} and once again apply the Cauchy-Schwarz inequality, which leads us to
\begin{align*}
&E\bigg[\max_{j\in\{1,\dots,k_{n}\}}\int_{r}^{t_{j,n}}\bigg|\int_{\underline{s}_{n}}^{s}\null_{n,j}\Delta_{s,v}\bigg(\overline{B}(v,v,\null_{n}Y)\null_{n}\dot{W}_{v} + \int_{r}^{v}\partial_{v}\overline{B}(v,u,\null_{n}Y)\,d\null_{n}W_{u}\bigg)\,dv\bigg|^{p}\,ds\bigg]\\
&\leq 2^{p-1}c^{p}\int_{r}^{T}(s-\underline{s}_{n})^{p/2}\lambda_{0}^{p}E\bigg[d_{\infty}((s,\null_{n}Y),(\underline{s}_{n},\null_{n}Y))^{p}\bigg(\int_{\underline{s}_{n}}^{s}|\null_{n}\dot{W}_{v}|^{2}\,dv\bigg)^{p/2}\bigg]\,ds\\
&\quad + 2^{p-1}c^{p}\int_{r}^{T}(s-\underline{s}_{n})^{p-1}\int_{\underline{s}_{n}}^{s}\lambda_{0}^{p}E\bigg[d_{\infty}((s,\null_{n}Y),(\underline{s}_{n},\null_{n}Y))^{p}\bigg(\int_{r}^{v}|\null_{n}\dot{W}_{u}|\,du\bigg)^{p}\bigg]\,dv\,ds\\
&\leq c_{p,5}|\mathbb{T}_{n}|^{p}\big(1 + E\big[\|\null_{n}Y^{r}\|_{\infty}^{\overline{\eta}p}\big]\big)^{1/\overline{\eta}}
\end{align*}
for $c_{p,5}:=2^{2p-1}\hat{w}_{p,2}^{1/2}(1 + (T-r)^{p+1}/(p+1))c^{p}\overline{c}_{p}$. By using the constant $\overline{c}_{\mathbb{T}}$ appearing in condition~\eqref{C.4}, we derive the following estimate for the ninth term:
\begin{align*}
&E\bigg[\max_{j\in\{1,\dots,k_{n}\}}\int_{r}^{t_{j,n}}\bigg|\int_{\underline{s}_{n}}^{s}\null_{n,j}\Delta_{s,u}\Sigma(u,u,\null_{n}Y)\,dW_{u}\bigg|^{p}\,ds\bigg]\\
&\leq w_{p}c^{p}\sum_{j=1}^{k_{n}}\int_{r}^{t_{j,n}}(s-\underline{s}_{n})^{p/2-1}\int_{\underline{s}_{n}}^{s}\lambda_{0}^{p}E\big[d_{\infty}((s,\null_{n}Y),(\underline{s}_{n},\null_{n}Y))^{p}\big]\,du\,ds\\
&\leq c_{p,6}|\mathbb{T}_{n}|^{p-1}\big(1 + E\big[\|\null_{n}Y^{r}\|_{\infty}^{\overline{\eta}p}\big])^{1/\overline{\eta}},
\end{align*}
where $c_{p,6}:=2^{p/2}w_{p}c^{p}\underline{c}_{p}(T-r)\overline{c}_{\mathbb{T}}$. Finally, for the last expression we now readily estimate that
\begin{align*}
&E\bigg[\max_{j\in\{1,\dots,k_{n}\}}\int_{r}^{t_{j,n}}\bigg|\int_{\underline{s}_{n}}^{s}\null_{n,j}\Delta_{s,v}\int_{r}^{v}\partial_{v}\Sigma(v,u,\null_{n}Y)\,dW_{u}\,dv\bigg|^{p}\,ds\bigg]\\
&\leq \int_{r}^{T}(s-\underline{s}_{n})^{p-1}\int_{\underline{s}_{n}}^{s}E\bigg[\lambda_{0}^{p}d_{\infty}((s,\null_{n}Y),(\underline{s}_{n},\null_{n}Y))^{p}\bigg|\int_{r}^{v}\partial_{v}\Sigma(v,u,\null_{n}Y)\,dW_{u}\bigg|^{p}\bigg]\,dv\,ds\\
&\leq c_{p,7}|\mathbb{T}_{n}|^{p}\big(1 + E\big[\|\null_{n}Y^{r}\|_{\infty}^{\overline{\eta}p}\big]\big)^{1/\overline{\eta}}\int_{r}^{T}(s-\underline{s}_{n})^{p/2}\,ds
\end{align*}
for $c_{p,7}:=2^{p/2}c^{p}\overline{c}_{p}w_{2p}^{1/2}(T-r)^{p/2}$. So, we let $c_{p,8}:=(T-r)((T-r)c_{p,1} + c_{p,3} + c_{p,5})$ and $c_{p,9}:=(T-r)^{p/2 +2}(c_{p,2} + c_{p,4} + c_{p,7})$ and conclude by setting $c_{p}:=7^{p-1}(c_{p,6}+c_{p,8} + c_{p,9})$.
\end{proof}

Next, we give a bound for the second remainder in~\eqref{Main Remainder Decomposition}, which allows for another application of Lemma~\ref{Auxiliary Lemma 1}, according to Remark~\ref{Auxiliary Remark}.

\begin{Lemma}\label{Remainder Lemma 2}
Let~\eqref{C.5}-\eqref{C.7} be valid and $h\in W_{r}^{1,2}([0,T],\mathbb{R}^{d})$. Then for each $p\geq 2$ there is $c_{p} > 0$ such that each $n\in\mathbb{N}$ and any solution $\null_{n}Y$ to~\eqref{Sequence VIE} satisfy
\[
E[|\null_{n}Y_{s} - \null_{n}Y_{\underline{s}_{n}} - \Phi_{h,n}(s,\null_{n}Y,W)|^{p}] \leq c_{p}|\mathbb{T}_{n}|^{p}\big(1 + E\big[\|\null_{n}Y^{r}\|_{\infty}^{2p}\big]\big)^{1/2}
\]
for every $s\in [r,T)$.
\end{Lemma}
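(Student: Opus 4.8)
The plan is to expand the increment $\null_{n}Y_{s}-\null_{n}Y_{\underline{s}_{n}}$ by means of~\eqref{Sequence VIE}, to observe that $\Phi_{h,n}(s,\null_{n}Y,W)$ reproduces its leading part \emph{exactly}, and to estimate what is left by the order $|\mathbb{T}_{n}|$ in $p$-th moment. Fixing $s\in[t_{i,n},t_{i+1,n})$ and splitting each Volterra integral in~\eqref{Sequence VIE} at the node $\underline{s}_{n}$ yields
\begin{align*}
\null_{n}Y_{s}-\null_{n}Y_{\underline{s}_{n}}
&=\int_{\underline{s}_{n}}^{s}\big(\underline{B}+B_{H}\dot{h}+\overline{B}\,\null_{n}\dot{W}\big)(s,u,\null_{n}Y)\,du+\int_{\underline{s}_{n}}^{s}\Sigma(s,u,\null_{n}Y)\,dW_{u}\\
&\quad+\int_{r}^{\underline{s}_{n}}\big[\big(\underline{B}+B_{H}\dot{h}\big)(s,u,\null_{n}Y)-\big(\underline{B}+B_{H}\dot{h}\big)(\underline{s}_{n},u,\null_{n}Y)\big]\,du\\
&\quad+\int_{r}^{\underline{s}_{n}}\big[\overline{B}(s,u,\null_{n}Y)-\overline{B}(\underline{s}_{n},u,\null_{n}Y)\big]\null_{n}\dot{W}_{u}\,du\\
&\quad+\int_{r}^{\underline{s}_{n}}\big[\Sigma(s,u,\null_{n}Y)-\Sigma(\underline{s}_{n},u,\null_{n}Y)\big]\,dW_{u},
\end{align*}
where the $\int_{r}^{\underline{s}_{n}}$-integrals are read as empty when $\underline{s}_{n}=r$. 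Since $L_{n}(W)(s)-L_{n}(W)(\underline{s}_{n})=\int_{\underline{s}_{n}}^{s}\null_{n}\dot{W}_{u}\,du$ and $h(s)-h(\underline{s}_{n})=\int_{\underline{s}_{n}}^{s}\dot{h}(u)\,du$, one may rewrite $\Phi_{h,n}(s,\null_{n}Y,W)$ as
\begin{align*}
&\int_{\underline{s}_{n}}^{s}\big(B_{H}\dot{h}+\overline{B}\,\null_{n}\dot{W}\big)(\underline{s}_{n},\underline{s}_{n},\null_{n}Y)\,du+\Sigma(\underline{s}_{n},\underline{s}_{n},\null_{n}Y)(W_{s}-W_{\underline{s}_{n}})\\
&\quad+\int_{\underline{s}_{n}}^{s}\int_{r}^{\underline{s}_{n}}\partial_{v}\overline{B}(v,u,\null_{n}Y)\,\null_{n}\dot{W}_{u}\,du\,dv.
\end{align*}
The single structural point is that the last term here coincides \emph{identically} with the $\overline{B}$-difference over $[r,\underline{s}_{n}]$ in the previous display: by~\eqref{C.5} the map $\overline{B}(\cdot,u,x)$ is absolutely continuous on $[u,T]$ with weak time derivative $\partial_{v}\overline{B}(\cdot,u,x)$, which is bounded by~\eqref{C.6}, so that $\overline{B}(s,u,x)-\overline{B}(\underline{s}_{n},u,x)=\int_{\underline{s}_{n}}^{s}\partial_{v}\overline{B}(v,u,x)\,dv$, and a pathwise application of Fubini's theorem gives the asserted equality (note that $u\le\underline{s}_{n}$ here, so both sides lie in the domain $\{s\le t\}$ of $\overline{B}$).

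It therefore remains to bound the $p$-th moment of each of the seven surviving terms
\begin{align*}
&\int_{\underline{s}_{n}}^{s}\underline{B}(s,u,\null_{n}Y)\,du,\\
&\int_{\underline{s}_{n}}^{s}\big[B_{H}(s,u,\null_{n}Y)-B_{H}(\underline{s}_{n},\underline{s}_{n},\null_{n}Y)\big]\dot{h}(u)\,du,\\
&\int_{\underline{s}_{n}}^{s}\big[\overline{B}(s,u,\null_{n}Y)-\overline{B}(\underline{s}_{n},\underline{s}_{n},\null_{n}Y)\big]\null_{n}\dot{W}_{u}\,du,\\
&\int_{\underline{s}_{n}}^{s}\big[\Sigma(s,u,\null_{n}Y)-\Sigma(\underline{s}_{n},\underline{s}_{n},\null_{n}Y)\big]\,dW_{u},\\
&\int_{r}^{\underline{s}_{n}}\big[\underline{B}(s,u,\null_{n}Y)-\underline{B}(\underline{s}_{n},u,\null_{n}Y)\big]\,du,\\
&\int_{r}^{\underline{s}_{n}}\big[B_{H}(s,u,\null_{n}Y)-B_{H}(\underline{s}_{n},u,\null_{n}Y)\big]\dot{h}(u)\,du,\\
&\int_{r}^{\underline{s}_{n}}\big[\Sigma(s,u,\null_{n}Y)-\Sigma(\underline{s}_{n},u,\null_{n}Y)\big]\,dW_{u}
\end{align*}
by $c_{p}|\mathbb{T}_{n}|^{p}(1+E[\|\null_{n}Y^{r}\|_{\infty}^{2p}])^{1/2}$; convexity ($|a_{1}+\dots+a_{7}|^{p}\le 7^{p-1}\sum_{j}|a_{j}|^{p}$) then finishes the proof. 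For the $\underline{B}$- and $B_{H}$-integrals I would use $|\underline{B}(u,u,x)|+|\partial_{t}\underline{B}(t,u,x)|+|\partial_{t}B_{H}(t,u,x)|\le c(1+\|x\|_{\infty}^{\kappa})$ from~\eqref{C.6} (hence, with~\eqref{C.5}, $|\underline{B}(s,u,x)|\le c(1+T-r)(1+\|x\|_{\infty}^{\kappa})$ and $|F(s,u,x)-F(\underline{s}_{n},u,x)|\le c(1+\|x\|_{\infty}^{\kappa})|s-\underline{s}_{n}|$ for $F\in\{\underline{B},B_{H}\}$), together with $|s-\underline{s}_{n}|\le 2|\mathbb{T}_{n}|$, the Cauchy--Schwarz bound $\int_{\underline{s}_{n}}^{s}|\dot{h}(u)|\,du\le\|h\|_{1,2,r}|s-\underline{s}_{n}|^{1/2}$, Proposition~\ref{Sequence L-p Estimate Proposition} to trade $\|\null_{n}Y\|_{\infty}$- for $\|\null_{n}Y^{r}\|_{\infty}$-moments, and finally $\kappa<1$ with Jensen's inequality to pass from $1+E[\|\null_{n}Y^{r}\|_{\infty}^{\kappa p}]$ to a multiple of $(1+E[\|\null_{n}Y^{r}\|_{\infty}^{2p}])^{1/2}$. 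For the $\overline{B}$- and $\Sigma$-differences on $[\underline{s}_{n},s]$ I would telescope \emph{within the region $\{s\le t\}$}, writing $\overline{B}(s,u,x)-\overline{B}(\underline{s}_{n},\underline{s}_{n},x)=\big(\overline{B}(s,u,x)-\overline{B}(s,\underline{s}_{n},x)\big)+\big(\overline{B}(s,\underline{s}_{n},x)-\overline{B}(\underline{s}_{n},\underline{s}_{n},x)\big)$ and bounding the first bracket by $\lambda d_{\infty}((u,\null_{n}Y),(\underline{s}_{n},\null_{n}Y))=\lambda\big(|u-\underline{s}_{n}|^{1/2}+\|\null_{n}Y^{u}-\null_{n}Y^{\underline{s}_{n}}\|_{\infty}\big)$ through~\eqref{C.7} and the second by the boundedness of $\partial_{t}\overline{B}$ (resp.\ $\partial_{t}\Sigma$) from~\eqref{C.6}. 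Since $\|\null_{n}Y^{u}-\null_{n}Y^{\underline{s}_{n}}\|_{\infty}\le\|\null_{n}Y^{s}-\null_{n}Y^{\underline{s}_{n}}\|_{\infty}$, whose $(2p)$-th moment is at most a multiple of $(1+E[\|\null_{n}Y^{r}\|_{\infty}^{2p}])|s-\underline{s}_{n}|^{p}$ by Proposition~\ref{Sequence L-p Estimate Proposition}, pairing this path oscillation with the driver via Cauchy--Schwarz and using $E[(\int_{\underline{s}_{n}}^{s}|\null_{n}\dot{W}_{u}|\,du)^{2p}]\le\hat{w}_{2p,1}|\mathbb{T}_{n}|^{-p}(s-\underline{s}_{n})^{2p}$ from~\eqref{Auxiliary Estimate 2} for the $\null_{n}\dot{W}$-integral, resp.\ the Burkholder--Davis--Gundy bound~\eqref{Auxiliary Estimate 3} (applicable since $p\ge 2$) for the $dW$-integral, produces the claimed order $|\mathbb{T}_{n}|^{p}$ together with the factor $(1+E[\|\null_{n}Y^{r}\|_{\infty}^{2p}])^{1/2}$; the remaining $|u-\underline{s}_{n}|^{1/2}$- and $\partial_{t}$-contributions are of higher order. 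The time-shift integral with $\Sigma$ over $[r,\underline{s}_{n}]$ is handled directly by~\eqref{Auxiliary Estimate 3} and $|\Sigma(s,u,x)-\Sigma(\underline{s}_{n},u,x)|\le c|s-\underline{s}_{n}|$.

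The genuinely delicate point, as opposed to this bookkeeping, is twofold. First, every time shift must be carried out inside the region $\{s\le t\}$ on which the coefficients and their weak time derivatives are defined; this forces the particular order of the telescoping splits above and is why one cannot, for instance, replace $\overline{B}(s,u,x)$ first by $\overline{B}(\underline{s}_{n},u,x)$ when $u>\underline{s}_{n}$. Second, the iterated-integral term in the definition of $\Phi_{h,n}$ must be an \emph{exact} match for the $\overline{B}$-difference over $[r,\underline{s}_{n}]$ rather than only a first-order approximation, for otherwise the stiff factor $\null_{n}\dot{W}_{u}$, whose $L^{p}$-norm is of order $|\mathbb{T}_{n}|^{-1/2}$, would generate an error of the uncontrollable order $|\mathbb{T}_{n}|^{1/2}$. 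With these two observations, the estimates above are a direct combination of~\eqref{C.5}--\eqref{C.7}, the moment inequalities recalled in Section~\ref{Section 3.2}, and the a priori bound of Proposition~\ref{Sequence L-p Estimate Proposition}.
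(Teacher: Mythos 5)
Your proof is correct and rests on the same ingredients as the paper's: the decomposition you write down is, after an application of Fubini's theorem, an algebraic regrouping of the eight terms in~\eqref{First-Order Remainder Decomposition}, and each piece is estimated with the same tools (Proposition~\ref{Sequence L-p Estimate Proposition}, the $\null_{n}\dot{W}$-moment bound~\eqref{Auxiliary Estimate 2}, the Burkholder--Davis--Gundy estimate~\eqref{Auxiliary Estimate 3}, the $d_{\infty}$-Lipschitz conditions~\eqref{C.7} and the bounded weak time derivatives from~\eqref{C.5}--\eqref{C.6}). The only organizational difference is that the paper applies the Fubini reparameterization to the full Volterra increment before subtracting $\Phi_{h,n}$, whereas you split the integral at the node $\underline{s}_{n}$ first and then note the exact cancellation of the $\overline{B}\,\null_{n}\dot{W}$-time-shift over $[r,\underline{s}_{n}]$ against the iterated $\partial_{v}\overline{B}$-integral built into $\Phi_{h,n}$ --- a correct observation which is indeed the reason that term appears in the definition of $\Phi_{h,n}$ rather than a cruder first-order proxy.
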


\begin{proof}
From Fubini's theorem for deterministic and stochastic integrals and the definition of $\Phi_{h,n}$ we get that
\begin{equation}\label{First-Order Remainder Decomposition}
\begin{split}
&\null_{n}Y_{s} - \null_{n}Y_{\underline{s}_{n}} - \Phi_{h,n}(s,\null_{n}Y,W) = \int_{\underline{s}_{n}}^{s}\underline{B}(u,u,\null_{n}Y)\,du\\
& + \int_{\underline{s}_{n}}^{s} B_{H}(u,u,\null_{n}Y) - B_{H}(\underline{s}_{n},\underline{s}_{n},\null_{n}Y)\,dh(u)\\
& + \int_{\underline{s}_{n}}^{s}\overline{B}(u,u,\null_{n}Y) - \overline{B}(\underline{s}_{n},\underline{s}_{n},\null_{n}Y)\,d\null_{n}W_{u}\\
& + \int_{\underline{s}_{n}}^{s}\int_{r}^{v}\partial_{v}\underline{B}(v,u,\null_{n}Y) + \partial_{v}B_{H}(v,u,\null_{n}Y)\,dh(u)\,dv\\
& + \int_{\underline{s}_{n}}^{s}\int_{\underline{s}_{n}}^{v}\partial_{v}\overline{B}(v,u,\null_{n}Y)\,d\null_{n}W_{u}\,dv\\
& + \int_{\underline{s}_{n}}^{s}\Sigma(u,u,\null_{n}Y) - \Sigma(\underline{s}_{n},\underline{s}_{n},\null_{n}Y)\,dW_{u} + \int_{\underline{s}_{n}}^{s}\int_{r}^{v}\partial_{v}\Sigma(v,u,\null_{n}Y)\,dW_{u}\,dv\quad\text{a.s.}
\end{split}
\end{equation}
Proposition~\ref{Sequence L-p Estimate Proposition} provides a constant $\underline{c}_{2p} > 0$ such that~\eqref{Sequence L-p Estimate} holds when $p$ and $c_{p}$ are replaced by $2p$ and $\underline{c}_{2p}$, respectively. We set $\overline{c}_{p,2}:=\lambda^{p} + (T-r)^{p/2}c^{p}$ and $\overline{c}_{p,1}:=(1 + \underline{c}_{2p})^{1/2}$ and define eight constants as follows:
\begin{align*}
&c_{p,1}:=2^{2p}c^{p}\overline{c}_{p,1},\,\,c_{p,2}:=2^{3p}\|h\|_{1,2,r}^{p}\overline{c}_{p,1}\overline{c}_{p,2},\,\,c_{p,3}:=2^{3p/2}3^{p}\hat{w}_{p,2}^{1/2}\overline{c}_{p,1}\overline{c}_{p,2},\\
&c_{p,4}:=(T-r)^{p}c_{p,1},\,\, c_{p,5}:=2^{2p}(T-r)^{p/2}\|h\|_{1,2,r}^{p}c^{p}\overline{c}_{p,1},\,\, c_{p,6}:=2^{3p/2}\hat{w}_{p,1}(T-r)^{p/2}c^{p},\\
&c_{p,7}:=2^{p}3^{p}w_{p}\overline{c}_{p,1}\overline{c}_{p,2}\,\,\text{and}\,\,c_{p,8}:=2^{p}w_{p}(T-r)^{p/2}c^{p}.
\end{align*}
By using the inequalities of Jensen and Cauchy-Schwarz and~\eqref{Auxiliary Estimate 3}, it follows readily that the $p$-th moment of the $i$-th expression in the decomposition~\eqref{First-Order Remainder Decomposition} is bounded by $c_{p,i}|\mathbb{T}_{n}|^{p}(1 + E[\|\null_{n}Y^{r}\|_{\infty}^{2p}])^{1/2}$ for all $i\in\{1,\dots,8\}$. We set $c_{p}:=8^{p-1}(c_{p,1} + \cdots + c_{p,8})$ and the asserted estimate follows.
\end{proof}

\subsection{A second moment estimate for the third remainder}\label{Section 4.3}

We directly bound the third remainder in~\eqref{Main Remainder Decomposition} by repeatedly using an estimate that follows for any $n\in\mathbb{N}$ with $k_{n}\geq 2$ from Doob's $L^{2}$-maximal inequality; see~\cite{ContKalininSupp}[Lemma 33] for details.

\begin{enumerate}[(i)]
\setcounter{enumi}{4}
\item For every $l\in\{1,\dots,d\}$ assume that $(\null_{l}U_{i})_{i\in\{1,\dots,k_{n}-1\}}$ and $(\null_{l}V_{i})_{i\in\{1,\dots,k_{n}-1\}}$ are two sequences of $\mathbb{R}^{1\times m}$-valued and $\mathbb{R}^{m}$-valued random vectors, respectively, such that $\null_{l}U_{i}$ is $\mathscr{F}_{t_{i-1,n}}$-measurable, $\null_{l}V_{i}$ is $\mathscr{F}_{t_{i},n}$-measurable,
\[
E[|\null_{l}U_{i}|^{4} + |\null_{l}U_{i}|^{4}] < \infty\quad\text{and}\quad E[\null_{l}V_{i}|\mathscr{F}_{t_{i-1,n}}] = 0\quad\text{a.s.}
\]
for all $i\in\{1,\dots,k_{n}-1\}$. Then
\begin{equation}\label{Auxiliary Estimate 5}
E\bigg[\max_{j\in\{1,\dots,k_{n}\}}\bigg|\sum_{i=1}^{j-1}\sum_{l=1}^{d}\null_{l}U_{i}\,\null_{l}V_{i}\bigg|^{2}\bigg] \leq 4 \sum_{i=1}^{k_{n}-1}\sum_{l_{1},l_{2}=1}^{d}E\big[\null_{l_{1}}U_{i}\,\null_{l_{1}}V_{i}\,\null_{l_{2}}V_{i}'\null_{l_{2}}U_{i}'\big].
\end{equation}
\end{enumerate}

\begin{Proposition}\label{Remainder Proposition 3}
Let~\eqref{C.5}-\eqref{C.8} be satisfied and $h\in W_{r}^{1,2}([0,T],\mathbb{R}^{d})$. Then there is $c_{2} > 0$ such that for each $n\in\mathbb{N}$ and any solution $\null_{n}Y$ to~\eqref{Sequence VIE} it holds that
\begin{align*}
E\bigg[\max_{j\in\{0\,\dots,k_{n}\}}\bigg|\int_{r}^{t_{j,n}}\partial_{x}\overline{B}(t_{j,n},\underline{s}_{n},\null_{n}Y)&\Phi_{h,n}(s,\null_{n}Y,W)\null_{n}\dot{W}_{s} - R(t_{j,n},\underline{s}_{n},\null_{n}Y)\gamma_{n}(s)\,ds\bigg|^{2}\bigg]\\
&\leq c_{2}|\mathbb{T}_{n}|(1 + E[\|\null_{n}Y^{r}\|_{\infty}^{2}]). 
\end{align*}
\end{Proposition}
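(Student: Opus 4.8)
The plan is to split the inner integral over the cells of $\mathbb{T}_{n}$, to substitute the four summands defining $\Phi_{h,n}$, to evaluate each cell integral against $\null_{n}\dot{W}_{s}\,ds$ explicitly, and to recognise that $R$ cancels precisely the conditional mean -- given the past up to the left node of the cell -- of the quadratic-in-$\Delta W$ part produced by the $\overline{B}$- and $\Sigma$-summands, leaving a martingale-type sum to which~\eqref{Auxiliary Estimate 5} applies. After disposing of the trivial case $k_{n}=1$ (where $\null_{n}\dot{W}$ and $\gamma_{n}$ vanish on $[r,T]$) and assuming $E[\|\null_{n}Y^{r}\|_{\infty}^{2}]<\infty$, I would write $\int_{r}^{t_{j,n}}(\cdots)\,ds=\sum_{i=1}^{j-1}\int_{t_{i,n}}^{t_{i+1,n}}(\cdots)\,ds$ and note that on the cell $(t_{i,n},t_{i+1,n}]$ one has $\underline{s}_{n}=t_{i-1,n}$, $\null_{n}\dot{W}_{s}=\Delta W_{t_{i,n}}/\Delta t_{i+1,n}$ and, by~\eqref{Gamma Function}, $\gamma_{n}(s)=\Delta t_{i,n}/\Delta t_{i+1,n}$, so $\int_{t_{i,n}}^{t_{i+1,n}}\gamma_{n}(s)\,ds=\Delta t_{i,n}$; moreover $L_{n}(W)(s)-L_{n}(W)(\underline{s}_{n})$ is an explicit affine function of $\Delta W_{t_{i-1,n}}$ and $\Delta W_{t_{i,n}}$, $W_{s}-W_{\underline{s}_{n}}=(W_{s}-W_{t_{i,n}})+\Delta W_{t_{i,n}}$, and the nested integral $\int_{\underline{s}_{n}}^{s}\int_{r}^{\underline{s}_{n}}\partial_{v}\overline{B}(v,u,\null_{n}Y)\,dL_{n}(W)(u)\,dv$ is $\mathscr{F}_{t_{i-1,n}}$-measurable.

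Carrying out the $s$-integration then produces, coordinatewise on the cell, the expression $\sum_{l,l'}\partial_{x}\overline{B}_{k,l}(t_{j,n},\underline{s}_{n},\null_{n}Y)\bigl((1/2)\overline{B}+\Sigma\bigr)(\underline{s}_{n},\underline{s}_{n},\null_{n}Y)e_{l'}\,\Delta W^{l'}_{t_{i,n}}\Delta W^{l}_{t_{i,n}}$ -- the weight $1/2$ arising from $\int_{t_{i,n}}^{t_{i+1,n}}(s-t_{i,n})(\Delta t_{i+1,n})^{-2}\,ds=1/2$ for the interpolation term in the $\overline{B}$-summand, against the weight $1$ from $\int_{t_{i,n}}^{t_{i+1,n}}(\Delta t_{i+1,n})^{-1}\,ds=1$ for the $\Sigma$-summand -- together with further terms in which the factor $\Delta W^{l'}_{t_{i,n}}$ is replaced by $\Delta W^{l'}_{t_{i-1,n}}$, by $W^{l'}_{s}-W^{l'}_{t_{i,n}}$, by the $h$-increment $h(s)-h(\underline{s}_{n})$, or by the nested $dL_{n}(W)$-integral, each being an $\mathscr{F}_{t_{i-1,n}}$-measurable coefficient times a factor that is conditionally centred given $\mathscr{F}_{t_{i-1,n}}$. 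Since $E[\Delta W^{l'}_{t_{i,n}}\Delta W^{l}_{t_{i,n}}\mid\mathscr{F}_{t_{i-1,n}}]=\delta_{ll'}\Delta t_{i,n}$, the conditional mean given $\mathscr{F}_{t_{i-1,n}}$ of that first expression equals $\Delta t_{i,n}\,R_{k}(t_{j,n},\underline{s}_{n},\null_{n}Y)=\int_{t_{i,n}}^{t_{i+1,n}}R_{k}(t_{j,n},\underline{s}_{n},\null_{n}Y)\gamma_{n}(s)\,ds$ by the definition~\eqref{General Remainder Definition} of $R$, so that after cancellation the whole quantity equals $\sum_{i=1}^{j-1}\partial_{x}\overline{B}(t_{j,n},\underline{s}_{n},\null_{n}Y)\,\xi_{i}$, with each $\xi_{i}$ being $\mathscr{F}_{t_{i,n}}$-measurable, $E[\xi_{i}\mid\mathscr{F}_{t_{i-1,n}}]=0$, and assembled from the conditionally centred pieces above.

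To remove the dependence of the coefficient on $j$ I would invoke the time absolute continuity of $\partial_{x}\overline{B}(\cdot,s,x)$ on $(s,T]$ from~\eqref{C.5} to write $\partial_{x}\overline{B}(t_{j,n},\underline{s}_{n},\null_{n}Y)=\partial_{x}\overline{B}(T,\underline{s}_{n},\null_{n}Y)-\int_{t_{j,n}}^{T}\partial_{t}\partial_{x}\overline{B}(t,\underline{s}_{n},\null_{n}Y)\,dt$: the first part turns the sum into a genuine $(\mathscr{F}_{t_{j,n}})_{j}$-martingale, whose maximum is bounded via~\eqref{Auxiliary Estimate 5}, while for the second I estimate $\bigl|\int_{t_{j,n}}^{T}\cdots\,dt\bigr|^{2}\le(T-r)\int_{r}^{T}\bigl|\sum_{i<j}\partial_{t}\partial_{x}\overline{B}(t,\underline{s}_{n},\null_{n}Y)\xi_{i}\bigr|^{2}\,dt$, take expectations, use Fubini, and observe that for each fixed $t$ the inner sum is again a martingale in $j$ with $\partial_{t}\partial_{x}\overline{B}$ bounded by $\overline{c}$ (condition~\eqref{C.8}). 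In either case~\eqref{Auxiliary Estimate 5} reduces the bound to sums $\sum_{i}E[\,|c_{i}|^{2}\,|\xi_{i}|^{2}\,]$, where $c_{i}$ is the respective $\mathscr{F}_{t_{i-1,n}}$-measurable coefficient; because $\partial_{x}\overline{B}$, $\overline{B}(s,s,\cdot)$ and $\Sigma(s,s,\cdot)$ are bounded by~\eqref{C.6} and~\eqref{C.8}, and the coefficients attached to the $B_{H}$- and nested-integral pieces grow at most like $1+\|\null_{n}Y\|_{\infty}^{\kappa}$ (controlled through Proposition~\ref{Sequence L-p Estimate Proposition}), each such sum telescopes against $\sum_{i}\Delta t_{i,n}=T-r$ or $\sum_{i}\Delta t_{i,n}^{2}\le|\mathbb{T}_{n}|(T-r)$, the relevant increments of $L_{n}(W)$ and of $\null_{n}Y$ being converted into the correct powers of $|\mathbb{T}_{n}|$ through~\eqref{Auxiliary Estimate 2},~\eqref{Auxiliary Estimate 4}, Proposition~\ref{Sequence L-p Estimate Proposition} and the balance~\eqref{Partition Condition}, which yields the asserted order $c_{2}|\mathbb{T}_{n}|(1+E[\|\null_{n}Y^{r}\|_{\infty}^{2}])$.

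I expect the hard part to be the nested integral $\int_{\underline{s}_{n}}^{s}\int_{r}^{\underline{s}_{n}}\partial_{v}\overline{B}(v,u,\null_{n}Y)\,dL_{n}(W)(u)\,dv$ coming from the fourth summand of $\Phi_{h,n}$, since on a cell its inner integrand is only $\mathscr{F}$-measurable one step too late; I would freeze the time and path arguments of $\partial_{v}\overline{B}$ to the lagged node, $\partial_{v}\overline{B}(v,u,\null_{n}Y)=\partial_{v}\overline{B}(v,\underline{u}_{n},\null_{n}Y^{\underline{u}_{n}})+\bigl(\partial_{v}\overline{B}(v,u,\null_{n}Y)-\partial_{v}\overline{B}(v,\underline{u}_{n},\null_{n}Y^{\underline{u}_{n}})\bigr)$, so that the first part is a lagged, hence predictable, integrand controlled in second moment by the Doob-type bound~\eqref{Auxiliary Estimate 4} with $|\partial_{v}\overline{B}|\le c$ from~\eqref{C.6}, and the second part is dominated, through the $d_{\infty}$-Lipschitz bound for $\partial_{v}\overline{B}$ in~\eqref{C.7} and the increment estimate of Proposition~\ref{Sequence L-p Estimate Proposition}, by an expression that is small because it carries increments of $\null_{n}Y$ over intervals of length $O(|\mathbb{T}_{n}|)$; here one exploits that~\eqref{Partition Condition} already forces $k_{n}|\mathbb{T}_{n}|\le c_{\mathbb{T}}(T-r)$, so that the $k_{n}$ contributions of this residual, each of size $O(|\mathbb{T}_{n}|)$, still add up to $O(|\mathbb{T}_{n}|)$. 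Alongside this, the main care lies in verifying that the conditional means genuinely recombine into $\int R\gamma_{n}$ with the exact weight $(1/2)\overline{B}+\Sigma$ and in casting every residual into the precise shape $\sum_{i}\sum_{l}\null_{l}U_{i}\,\null_{l}V_{i}$ demanded by~\eqref{Auxiliary Estimate 5}; all remaining steps are routine applications of the inequalities of Jensen, H\"older and Cauchy--Schwarz together with the bounds gathered in Sections~\ref{Section 3.1}--\ref{Section 3.3}.
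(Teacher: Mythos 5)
Your plan matches the paper's strategy in all essentials: cell-by-cell evaluation, an explicit decomposition of $\Phi_{h,n}\null_n\dot{W}_s - R\gamma_n$ into pieces that integrate to conditionally centred random vectors, identification of the conditional mean $\Delta t_{i,n}\,R_k$ from the $\overline{B}$- and $\Sigma$-quadratics (with the same $1/2$ versus $1$ accounting), use of~\eqref{Auxiliary Estimate 5} for the resulting discrete martingales, and removal of the $j$-dependence in $\partial_x\overline{B}(t_{j,n},\cdot,\cdot)$ by absolute continuity (you anchor at $T$ where the paper anchors at the cell endpoint; both work). The paper's proof spells all this out through the seven-term expansion~\eqref{Differential Remainder Decomposition}.

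One claim in your plan is not quite right and needs repair. You write that after cancellation the whole quantity equals $\sum_{i<j}\partial_x\overline{B}(t_{j,n},\underline{s}_n,\null_n Y)\xi_i$ with \emph{each} $\xi_i$ being $\mathscr{F}_{t_{i,n}}$-measurable and $E[\xi_i\mid\mathscr{F}_{t_{i-1,n}}]=0$, which is what~\eqref{Auxiliary Estimate 5} requires. This fails for the piece produced by $\Sigma(\underline{s}_n,\underline{s}_n,\cdot)(W_s-W_{s_n})\null_n\dot{W}_s^{(l)}$: on the cell $(t_{i,n},t_{i+1,n}]$ its $s$-integral is $\Sigma(\cdots)\tfrac{\Delta W_{t_{i,n}}^{(l)}}{\Delta t_{i+1,n}}\int_{t_{i,n}}^{t_{i+1,n}}(W_s-W_{t_{i,n}})\,ds$, which is only $\mathscr{F}_{t_{i+1,n}}$-measurable (it is conditionally centred given $\mathscr{F}_{t_{i,n}}$, not just given $\mathscr{F}_{t_{i-1,n}}$). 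So~\eqref{Auxiliary Estimate 5} in the form stated cannot be applied directly to this summand; you would need either a one-step shift of the filtration in the Doob argument, or --- as the paper does --- rewrite this piece via stochastic Fubini as a genuine predictable integral $\int\partial_x\overline{B}_{k,l}(s,\underline{s}_n,\cdot)\Sigma(\underline{s}_n,\underline{s}_n,\cdot)\tfrac{\overline{s}_n-s}{\Delta\overline{s}_n}\Delta W_{s_n}^{(l)}\,dW_s$ (plus the $\partial_t\partial_x\overline{B}$ correction) and use the BDG bound~\eqref{Auxiliary Estimate 3}. Finally, for the nested $dL_n(W)$-integral your freezing argument is a workable route but is more than is needed: since $\partial_v\overline{B}$ is bounded by $c$ and the inner integral runs only up to $\underline{s}_n$, the random vector $\null_{l,n}X_i$ is already $\mathscr{F}_{t_{i-1,n}}$-measurable, and the crude bound $|\null_{l,n}X_i|\le 2c\,|\mathbb{T}_n|\int_r^T|\null_n\dot{W}_u|\,du$ together with~\eqref{Auxiliary Estimate 2} already gives $E[|\null_{l,n}X_i|^2]=O(|\mathbb{T}_n|)$, which is all that~\eqref{Auxiliary Estimate 5} needs; there is no obstruction from the integrand being ``one step too late''.
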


\begin{proof}
By the definition~\eqref{General Remainder Definition} of the mapping $R$, we can write the $k$-th coordinate of $\partial_{x}\overline{B}(t_{j,n},\underline{s}_{n},\null_{n}Y)\Phi_{h,n}(s,\null_{n}Y,W)\null_{n}\dot{W}_{s} - R(t_{j,n},\underline{s}_{n},\null_{n}Y)\gamma_{n}(s)$ in the form
\[
\sum_{l=1}^{d}\partial_{x}\overline{B}_{k,l}(t_{j,n},\underline{s}_{n},\null_{n}Y)\big(\Phi_{h,n}(s,\null_{n}Y,W)\null_{n}\dot{W}_{s}^{(l)} - ((1/2)\overline{B} + \Sigma)(\underline{s}_{n},\underline{s}_{n},\null_{n}Y)\gamma_{n}(s)e_{l}\big)
\]
for each $j\in\{1,\dots,k_{n}\}$, any $k\in\{1,\dots,m\}$ and all $s\in [r,t_{j,n})$, where we write $X^{(l)}$ for the $l$-th coordinate of any $\mathbb{R}^{d}$-valued process $X$ for each $l\in\{1,\dots,d\}$. Based on this identity, we use the following decomposition:
\begin{equation}\label{Differential Remainder Decomposition}
\begin{split}
&\Phi_{h,n}(s,\null_{n}Y,W)\null_{n}\dot{W}_{s}^{(l)} - ((1/2)\overline{B} + \Sigma)(\underline{s}_{n},\underline{s}_{n},\null_{n}Y)\gamma_{n}(s)e_{l}\\
&=B_{H}(\underline{s}_{n},\underline{s}_{n},\null_{n}Y)(h(s_{n}) - h(\underline{s}_{n}))\null_{n}\dot{W}_{s}^{(l)} + \overline{B}(\underline{s}_{n},\underline{s}_{n},\null_{n}Y)(\null_{n}W_{s_{n}} - \null_{n}W_{\underline{s}_{n}})\null_{n}\dot{W}_{s}^{(l)}\\
&\quad + \Sigma(\underline{s}_{n},\underline{s}_{n},\null_{n}Y)\big(\Delta W_{s_{n}} \null_{n}\dot{W}_{s}^{(l)} - \gamma_{n}(s)e_{l}\big) + B_{H}(\underline{s}_{n},\underline{s}_{n},\null_{n}Y)(h(s) - h(s_{n}))\null_{n}\dot{W}_{s}^{(l)}\\
&\quad + \overline{B}(\underline{s}_{n},\underline{s}_{n},\null_{n}Y)\big((\null_{n}W_{s} - \null_{n}W_{s_{n}})\null_{n}\dot{W}_{s}^{(l)} - (1/2)\gamma_{n}(s)e_{l}\big)\\
&\quad + \Sigma(\underline{s}_{n},\underline{s}_{n},\null_{n}Y)(W_{s} - W_{s_{n}})\null_{n}\dot{W}_{s}^{(l)} + \bigg(\int_{\underline{s}_{n}}^{s}\int_{r}^{\underline{s}_{n}}\partial_{v}\overline{B}(v,u,\null_{n}Y)\,d\null_{n}W_{u}\,dv\bigg)\null_{n}\dot{W}_{s}^{(l)}
\end{split}
\end{equation}
with $l\in\{1,\dots,d\}$. To handle the first appearing term, we decompose the integral and apply Fubini's theorem for stochastic integrals to rewrite that
\begin{align*}
&\int_{r}^{t_{j,n}}\partial_{x}\overline{B}_{k,l}(t_{j,n},\underline{s}_{n},\null_{n}Y)B_{H}(\underline{s}_{n},\underline{s}_{n},\null_{n}Y)(h(s_{n}) - h(\underline{s}_{n}))\,d\null_{n}W_{s}^{(l)}\\
&=\int_{r}^{t_{j-1,n}}\partial_{x}\overline{B}_{k,l}(s,s_{n},\null_{n}Y)B_{H}(s_{n},s_{n},\null_{n}Y)(h(\overline{s}_{n}) - h(s_{n}))\,dW_{s}^{(l)}\\
&\quad + \int_{r}^{t_{j,n}}\int_{r}^{t\wedge t_{j-1,n}}\partial_{t}\partial_{x}\overline{B}_{k,l}(t,s_{n},\null_{n}Y)B_{H}(s_{n},s_{n},\null_{n}Y)(h(\overline{s}_{n}) - h(s_{n}))\,dW_{s}^{(l)}\,dt\quad\text{a.s.}
\end{align*}
for any $j\in\{1,\dots,k_{n}\}$, every $k\in\{1,\dots,m\}$ and each $l\in\{1,\dots,d\}$. By Proposition~\ref{Sequence L-p Estimate Proposition}, there is $\underline{c}_{2} > 0$ such that~\eqref{Sequence L-p Estimate} holds for $p=2$ with $\underline{c}_{2}$ instead of $c_{p}$. Therefore,
\begin{align*}
&E\bigg[\max_{j\in\{0,\dots,k_{n}\}}\sum_{k=1}^{m}\bigg|\sum_{l=1}^{d}\int_{r}^{t_{j,n}}\partial_{x}\overline{B}_{k,l}(t_{j,n},\underline{s}_{n},\null_{n}Y)B_{H}(\underline{s}_{n},\underline{s}_{n},\null_{n}Y)(h(s_{n}) - h(\underline{s}_{n}))\,d\null_{n}W_{s}^{(l)}\bigg|^{2}\bigg]\\
&\leq 2w_{2}c^{2}\overline{c}^{2}\int_{r}^{T}E\big[\big(1 + \|\null_{n}Y^{s_{n}}\|_{\infty}^{\kappa}\big)^{2}\big] |h(\overline{s}_{n}) - h(s_{n})|^{2}\,ds\\
&\quad + 2w_{2}(T-r)c^{2}\overline{c}^{2}\int_{r}^{T}\int_{r}^{t}E\big[\big(1 + \|\null_{n}Y^{s_{n}}\|_{\infty}^{\kappa}\big)^{2}\big]|h(\overline{s}_{n})-h(s_{n})|^{2}\,ds\,dt\\
&\leq c_{2,1}|\mathbb{T}_{n}|\big(1 + E\big[\|\null_{n}Y^{r}\|_{\infty}^{2}\big]\big)^{\kappa}
\end{align*}
with $c_{2,1}:= 2^{3}w_{2}(1 + (T - r)^{2}/2)(T-r)\|h\|_{1,2,r}^{2}c^{2}\overline{c}^{2}(1 + \underline{c}_{2})^{\kappa}$. Proceeding similarly, we obtain for the second term in the decomposition~\eqref{Differential Remainder Decomposition} that
\begin{align*}
&\int_{r}^{t_{j,n}}\partial_{x}\overline{B}_{k,l}(t_{j,n},\underline{s}_{n},\null_{n}Y)\overline{B}(\underline{s}_{n},\underline{s}_{n},\null_{n}Y)(\null_{n}W_{s_{n}} - \null_{n}W_{\underline{s}_{n}})\,d\null_{n}W_{s}^{(l)}\\
&= \int_{r}^{t_{j-1,n}}\partial_{x}\overline{B}_{k,l}(s,s_{n},\null_{n}Y)\overline{B}(s_{n},s_{n},\null_{n}Y)\Delta W_{s_{n}}\,dW_{s}^{(l)}\\
&\quad + \int_{r}^{t_{j,n}}\int_{r}^{t\wedge t_{j-1,n}}\partial_{t}\partial_{x}\overline{B}_{k,l}(t,s_{n},\null_{n}Y)\overline{B}(s_{n},s_{n},\null_{n}Y)\Delta W_{s_{n}}\,dW_{s}^{(l)}\,dt\quad\text{a.s.}
\end{align*}
for every $j\in\{1,\dots,k_{n}\}$, each $k\in\{1,\dots,m\}$ and any $l\in\{1,\dots,d\}$. Hence, by setting $c_{2,2}:=2w_{2}(1 + (T - r)^{2}/2)(T-r)d c^{2}\overline{c}^{2}$, it follows readily that
\begin{align*}
&E\bigg[\max_{j\in\{0,\dots,k_{n}\}}\sum_{k=1}^{m}\bigg|\sum_{l=1}^{d}\int_{r}^{t_{j,n}}\partial_{x}\overline{B}_{k,l}(t_{j,n},\underline{s}_{n},\null_{n}Y)\overline{B}(\underline{s}_{n},\underline{s}_{n},\null_{n}Y)(\null_{n}W_{s_{n}} - \null_{n}W_{\underline{s}_{n}})\,d\null_{n}W_{s}^{(l)}\bigg|^{2}\bigg]\\
&\leq 2w_{2}c^{2}\overline{c}^{2}\int_{r}^{T}\bigg(E\big[|\Delta W_{t_{n}}|^{2}\big]+ (T-r)\int_{r}^{t}E\big[|\Delta W_{s_{n}}|^{2}\big]\,ds\bigg)\,dt \leq c_{2,2}|\mathbb{T}_{n}|.
\end{align*} 

To deal with the third term in~\eqref{Differential Remainder Decomposition}, we utilize the $\mathbb{R}^{d}$-valued $\mathscr{F}_{t_{i,n}}$-measurable random vector
\[
\null_{l,n}V_{i}:=\Delta W_{t_{i,n}}\Delta W_{t_{i,n}}^{(l)} - \Delta t_{i,n}e_{l},
\]
which is independent of $\mathscr{F}_{t_{i-1,n}}$ and satisfies $E[\null_{l,n}V_{i}] = 0$ for any $i\in\{1,\dots,k_{n}\}$ and each $l\in\{1,\dots,d\}$. We note that if $\mathbb{I}_{l_{2},l_{1}}\in\mathbb{R}^{d\times d}$ denotes the matrix whose $(l_{2},l_{1})$-entry is $1$ and whose all other entries are zero, then
\[
E[_{l_{1},n}V_{i}\,\null_{l_{2},n}V_{i}'] = \mathbbm{1}_{\{l_{2}\}}(l_{1})(\Delta t_{i,n})^{2}(\mathbbm{I}_{d} + \mathbbm{I}_{l_{2},l_{1}})
\]
whenever $i\in\{1,\dots,k_{n}\}$ and $l_{1},l_{2}\in\{1,\dots,d\}$. Now, by decomposing the integral once again, we obtain that
\begin{align*}
&\int_{r}^{t_{j,n}}\partial_{x}\overline{B}_{k,l}(t_{j,n},\underline{s}_{n},\null_{n}Y)\Sigma(\underline{s}_{n},\underline{s}_{n},\null_{n}Y)\big(\Delta W_{s_{n}}\null_{n}\dot{W}_{s}^{(l)} - \gamma_{n}(s)e_{l}\big)\,ds\\
&= \sum_{i=1}^{j-1}\partial_{x}\overline{B}_{k,l}(t_{i,n},t_{i-1,n},\null_{n}Y)\Sigma(t_{i-1,n},t_{i-1,n},\null_{n}Y)\null_{l,n}V_{i}\\
&\quad + \sum_{i_{2}=1}^{j-1}\int_{t_{i_{2},n}}^{t_{i_{2}+1,n}}\sum_{i_{1}=1}^{i_{2}}\partial_{t}\partial_{x}\overline{B}_{k,l}(t,t_{i_{1}-1,n},\null_{n}Y)\Sigma(t_{i_{1}-1,n},t_{i_{1}-1,n},\null_{n}Y)\,\null_{l,n}V_{i_{1}}\,dt
\end{align*}
for all $j\in\{1,\dots,k_{n}\}$, each $k\in\{1,\dots,m\}$ and every $l\in\{1,\dots,d\}$. Consequently, the estimate~\eqref{Auxiliary Estimate 5} and Young's inequality give us that
\begin{align*}
&E\bigg[\max_{j\in\{0,\dots,k_{n}\}}\sum_{k=1}^{m}\bigg|\int_{r}^{t_{j,n}}\sum_{l=1}^{d}\partial_{x}\overline{B}_{k,l}(t_{j,n},\underline{s}_{n},\null_{n}Y)\Sigma(\underline{s}_{n},\underline{s}_{n},\null_{n}Y)\big(\Delta W_{s_{n}}\null_{n}\dot{W}_{s}^{(l)} - \gamma_{n}(s)e_{l}\big)\,ds\bigg|^{2}\bigg]\\
&\leq 2^{4}|\mathbb{T}_{n}|\sum_{i=1}^{k_{n}-1}\Delta t_{i,n}\sum_{k=1}^{m}\sum_{l=1}^{d} E\big[|\partial_{x}\overline{B}_{k,l}(t_{i,n},t_{i-1,n},\null_{n}Y)\Sigma(t_{i-1,n},t_{i-1,n},\null_{n}Y)\big|^{2}\big]\\
&\quad + 2^{4}(T-r)\int_{r}^{T}\sum_{i=1}^{k_{n}-1}(\Delta t_{i,n})^{2}\sum_{k=1}^{m}\sum_{l=1}^{d}E\big[\big|\partial_{t}\partial_{x}\overline{B}_{k,l}(t,t_{i-1,n},\null_{n}Y)\Sigma(t_{i-1,n},t_{i-1,n},\null_{n}Y)\big|^{2}\big]\,dt\\
&\leq c_{2,3}|\mathbb{T}_{n}|,
\end{align*}
where $c_{2,3}:=2^{4}(1 + (T-r)^{2})(T-r)c^{2}\overline{c}^{2}$. For the fourth expression in~\eqref{Differential Remainder Decomposition} we integrate by parts, after another decomposition of the integral, which yields that
\begin{align*}
&\int_{r}^{t_{j,n}}\partial_{x}\overline{B}(t_{j,n},\underline{s}_{n},\null_{n}Y)B_{H}(\underline{s}_{n},\underline{s}_{n},\null_{n}Y)(h(s) - h(s_{n}))\,d\null_{n}W_{s}^{(l)}\\
&= \int_{r}^{t_{j,n}}\partial_{x}\overline{B}_{k,l}(s,\underline{s}_{n},\null_{n}Y)B_{H}(\underline{s}_{n},\underline{s}_{n},\null_{n}Y)\Delta W_{s_{n}}^{(l)}\frac{(\overline{s}_{n} -s)}{\Delta\overline{s}_{n}}\,dh(s)\\
&\quad + \int_{r}^{t_{j,n}}\int_{r}^{t}\partial_{t}\partial_{x}\overline{B}_{k,l}(t,\underline{s}_{n},\null_{n}Y)B_{H}(\underline{s}_{n},\underline{s}_{n},\null_{n}Y)\Delta W_{s_{n}}^{(l)}\frac{(\overline{s}_{n} -s)}{\Delta\overline{s}_{n}}\,dh(s)\,dt
\end{align*}
for each $j\in\{1,\dots,k_{n}\}$, any $k\in\{1,\dots,m\}$ and every $l\in\{1,\dots,d\}$. Hence, from the Cauchy-Schwarz inequality we get that
\begin{align*}
&E\bigg[\max_{j\in\{0,\dots,k_{n}\}}\sum_{k=1}^{m}\bigg|\sum_{l=1}^{d}\int_{r}^{t_{j,n}}\partial_{x}\overline{B}(t_{j,n},\underline{s}_{n},\null_{n}Y)B_{H}(\underline{s}_{n},\underline{s}_{n},\null_{n}Y)(h(s) - h(s_{n}))\,d\null_{n}W_{s}^{(l)}\bigg|^{2}\bigg]\\
&\leq 2\|h\|_{1,2,r}^{2} \int_{r}^{T}\sum_{k=1}^{m}\sum_{l=1}^{d}E\big[\big|\partial_{x}\overline{B}_{k,l}(s,\underline{s}_{n},\null_{n}Y)B_{H}(\underline{s}_{n},\underline{s}_{n},\null_{n}Y)|^{2}\big] E\big[|\Delta W_{s_{n}}^{(l)}\big|^{2}\big]\,ds\\
&\quad +2\|h\|_{1,2,r}^{2}(T-r)\int_{r}^{T}\int_{r}^{t}\sum_{k=1}^{m}E\bigg[\bigg|\sum_{l=1}^{d}\partial_{x}\overline{B}_{k,l}(t,\underline{s}_{n},\null_{n}Y)B_{H}(\underline{s}_{n},\underline{s}_{n},\null_{n}Y)\Delta W_{s_{n}}^{(l)}\bigg|^{2}\bigg]\,ds\,dt\\
&\leq  c_{2,4}|\mathbb{T}_{n}|\big(1 + E[\|\null_{n}Y^{r}\|_{\infty}^{2}]\big)^{\kappa}
\end{align*}
with $c_{2,4}:=2^{3}(1+(T-r)^{2}/2)(T-r)\|h\|_{1,2,r}^{2}c^{2}\overline{c}^{2}(1+\underline{c}_{2})^{\kappa}$, because $\Delta W_{n}^{(1)},\dots,\Delta W_{s_{n}}^{(d)}$ are pairwise independent and independent of $\mathscr{F}_{\underline{s}_{n}}$ for all $s\in [r,T]$. 

The fifth term in~\eqref{Differential Remainder Decomposition} can be treated in a similar way as the third. Namely, we set $\null_{l,n}U_{s}:=(\null_{n}W_{s} - \null_{n}W_{s_{n}})\null_{n}\dot{W}_{s}^{(l)} - (1/2)\gamma_{n}(s)e_{l}$ for every $s\in [r,T]$ and rewrite that
\begin{align*}
&\int_{r}^{t_{j,n}}\partial_{x}\overline{B}_{k,l}(t_{j,n},\underline{s}_{n},\null_{n}Y)\overline{B}(\underline{s}_{n},\underline{s}_{n},\null_{n}Y)\null_{l,n}U_{s}\,ds\\
&= \frac{1}{2}\sum_{i=1}^{j-1}\partial_{x}\overline{B}_{k,l}(t_{i,n},t_{i-1,n},\null_{n}Y)\overline{B}(t_{i-1,n},t_{i-1,n},\null_{n}Y)\null_{l,n}V_{i}\\
&\quad + \frac{1}{2}\sum_{i_{2}=1}^{j-1}\int_{t_{i_{2},n}}^{t_{i_{2}+1,n}}\sum_{i_{1}=1}^{i_{2}}\partial_{t}\partial_{x}\overline{B}_{k,l}(t,t_{i_{1}-1,n},\null_{n}Y)\overline{B}(t_{i_{1}-1,n},t_{i_{1}-1,n},\null_{n}Y)\null_{l,n}V_{i_{1}}\,dt
\end{align*}
for all $j\in\{1,\dots,k_{n}\}$, each $k\in\{1,\dots,m\}$ and every $l\in\{1,\dots,d\}$. Thus, from the estimate~\eqref{Auxiliary Estimate 5} we can again infer that
\begin{align*}
& E\bigg[\max_{j\in\{0,\dots,k_{n}\}}\sum_{k=1}^{m}\bigg|\int_{r}^{t_{j,n}}\sum_{l=1}^{d}\partial_{x}\overline{B}_{k,l}(t_{j,n},\underline{s}_{n},\null_{n}Y)\overline{B}(\underline{s}_{n},\underline{s}_{n},\null_{n}Y)\null_{l,n}U_{s}\,ds\bigg|^{2}\bigg]\\
&\leq 2^{2}|\mathbb{T}_{n}|\sum_{i=1}^{k_{n}-1}\Delta t_{i,n}\sum_{k=1}^{m}\sum_{l=1}^{d}E\big[\big|\partial_{x}\overline{B}_{k,l}(t_{i,n},t_{i-1,n},\null_{n}Y)\overline{B}(t_{i-1,n},t_{i-1,n},\null_{n}Y)\big|^{2}\big]\\
&\quad + 2^{2}(T-r)\int_{r}^{T}\sum_{i=1}^{k_{n}-1}(\Delta t_{i,n})^{2}\sum_{k=1}^{m}\sum_{l=1}^{d} E\big[|\partial_{t}\partial_{x}\overline{B}_{k,l}(t,t_{i-1,n},\null_{n}Y)\overline{B}(t_{i-1,n},t_{i-1,n},\null_{n}Y)\big|^{2}\big]\,dt\\
&\leq c_{2,5}|\mathbb{T}_{n}|
\end{align*}
for $c_{2,5}:=2^{2}(1 + (T-r)^{2})(T-r)c^{2}\overline{c}^{2}$. For the sixth expression in~\eqref{Differential Remainder Decomposition} we decompose the integral and apply It{\^o}'s formula to the effect that
\begin{align*}
&\int_{r}^{t_{j,n}}\partial_{x}\overline{B}_{k,l}(t_{j,n},\underline{s}_{n},\null_{n}Y)\Sigma(\underline{s}_{n},\underline{s}_{n},\null_{n}Y)(W_{s}-W_{s_{n}})\,d\null_{n}W_{s}^{(l)}\\
&= \int_{r}^{t_{j,n}}\partial_{x}\overline{B}_{k,l}(s,\underline{s}_{n},\null_{n}Y)\Sigma(\underline{s}_{n},\underline{s}_{n},\null_{n}Y)\frac{(\overline{s}_{n} - s)}{\Delta\overline{s}_{n}}\Delta W_{s_{n}}^{(l)}\,dW_{s}\\
&\quad + \int_{r}^{t_{j,n}}\int_{r}^{t}\partial_{t}\partial_{x}\overline{B}_{k,l}(t,\underline{s}_{n},\null_{n}Y)\Sigma(\underline{s}_{n},\underline{s}_{n},\null_{n}Y)\frac{(\overline{s}_{n} - s)}{\Delta\overline{s}_{n}}\Delta W_{s_{n}}^{(l)}\,dW_{s}\,dt\quad\text{a.s.}
\end{align*}
for all $j\in\{1,\dots,k_{n}\}$, each $k\in\{1,\dots,m\}$ and every $l\in\{1,\dots,d\}$. Hence, by utilizing that $\Delta W_{s_{n}}^{(1)},\dots,\Delta W_{s_{n}}^{(d)}$ are pairwise independent for any $s\in [r,T]$, we estimate that
\begin{align*}
& E\bigg[\max_{j\in\{0,\dots,k_{n}\}}\sum_{k=1}^{m}\bigg|\sum_{l=1}^{d}\int_{r}^{t_{j,n}}\partial_{x}\overline{B}_{k,l}(t_{j,n},\underline{s}_{n},\null_{n}Y)\Sigma(\underline{s}_{n},\underline{s}_{n},\null_{n}Y)(W_{s}-W_{s_{n}})\,d\null_{n}W_{s}^{(l)}\bigg|^{2}\bigg]\\
&\leq 2w_{2}|\mathbb{T}_{n}|\int_{r}^{T}\sum_{k=1}^{m}\sum_{l=1}^{d}E\big[\big|\partial_{x}\overline{B}_{k,l}(s,\underline{s}_{n},\null_{n}Y)\Sigma(\underline{s}_{n},\underline{s}_{n},\null_{n}Y)\big|^{2}\big]\,ds\\
&\quad + 2w_{2}(T-r)\int_{r}^{T}\int_{r}^{t}\sum_{k=1}^{m}\sum_{l=1}^{d}E\big[\big|\partial_{t}\partial_{x}\overline{B}_{k,l}(t,\underline{s}_{n},\null_{n}Y)\Sigma(\underline{s}_{n},\underline{s}_{n},\null_{n}Y)\big|^{2}\big]\Delta s_{n}\,ds\,dt\\
&\leq c_{2,6}|\mathbb{T}_{n}|,
\end{align*}
where $c_{2,6}:=2w_{2}(1 + (T-r)^{2}/2)(T-r)c^{2}\overline{c}^{2}$. 

Finally, for the seventh expression in~\eqref{Differential Remainder Decomposition} we define an $\mathbb{R}^{m}$-valued $\mathscr{F}_{t_{i-1,n}}$-measurable random vector by
\[
\null_{l,n}X_{i}:=\frac{1}{\Delta t_{i+1,n}}\int_{t_{i,n}}^{t_{i+1,n}}\int_{t_{i-1,n}}^{s}\int_{r}^{t_{i-1,n}}\partial_{v}\overline{B}(v,u,\null_{n}Y)\,d\null_{n}W_{u}\,dv\,ds,
\]
which satisfies $E\big[|\null_{l,n}X_{i}|^{2}\big] \leq 2^{2}\hat{w}_{2,1}(T-r)^{2}c_{\mathbb{T}}c^{2}(t_{i+1,n}-t_{i,n})$, for any $i\in\{1,\dots,k_{n}-1\}$ and every $l\in\{1,\dots,d\}$. Then we have
\begin{align*}
&\int_{r}^{t_{j,n}}\partial_{x}\overline{B}_{k,l}(t_{j,n},\underline{s}_{n},\null_{n}Y)\bigg(\int_{\underline{s}_{n}}^{s}\int_{r}^{\underline{s}_{n}}\partial_{v}\overline{B}(v,u,\null_{n}Y)\,d\null_{n}W_{u}\,dv\bigg)\,d\null_{n}W_{s}^{(l)}\\
&= \sum_{i=1}^{j-1}\partial_{x}\overline{B}_{k,l}(t_{i,n},t_{i-1,n},\null_{n}Y)\null_{l,n}X_{i}\Delta W_{t_{i,n}}^{(l)}\\
&\quad + \sum_{i_{2}=1}^{j-1}\int_{t_{i_{2},n}}^{t_{i_{2}+1,n}}\sum_{i_{1}=1}^{i_{2}}\partial_{t}\partial_{x}\overline{B}_{k,l}(t,t_{i_{1}-1,n},\null_{n}Y)\null_{l,n}X_{i_{1}}\Delta W_{t_{i_{1},n}}^{(l)}\,dt
\end{align*}
for all $j\in\{1,\dots,k_{n}\}$, each $k\in\{1,\dots,m\}$ and every $l\in\{1,\dots,d\}$. As $\Delta W_{t_{i,n}}^{(1)},\dots, W_{t_{i,n}}^{(d)}$ are pairwise independent and independent of $\mathscr{F}_{t_{i-1,n}}$ for every $i\in\{1,\dots,k_{n}\}$, it follows that
\begin{align*}
&E\bigg[\max_{j\in\{0,\dots,k_{n}\}}\sum_{k=1}^{m}\bigg|\sum_{l=1}^{d}\int_{r}^{t_{j,n}}\partial_{x}\overline{B}_{k,l}(t_{j,n},\underline{s}_{n},\null_{n}Y)\bigg(\int_{\underline{s}_{n}}^{s}\int_{r}^{\underline{s}_{n}}\partial_{v}\overline{B}(v,u,\null_{n}Y)\,d\null_{n}W_{u}\,dv\bigg)\,d\null_{n}W_{s}^{(l)}\bigg|^{2}\bigg]\\
&\leq 2^{3}\sum_{i=1}^{k_{n-1}}\Delta t_{i,n}\sum_{k=1}^{m}\sum_{l=1}^{d}E\big[|\partial_{x}\overline{B}_{k,l}(t_{i,n},t_{i-1,n},\null_{n}Y)\null_{l,n}X_{i}|^{2}\big]\\
&\quad + 2(T-r)\int_{r}^{T}\sum_{k=1}^{m}E\bigg[\max_{j\in\{1,\dots,k_{n}\}}\bigg|\sum_{i=1}^{j-1}\sum_{l=1}^{d}\partial_{t}\partial_{x}\overline{B}_{k,l}(t,t_{i-1,n},\null_{n}Y)\null_{l,n}X_{i}\Delta W_{t_{i,n}}^{(l)}\bigg|^{2}\bigg]\,dt\\
&\leq c_{2,7}|\mathbb{T}_{n}|
\end{align*}
with $c_{2,7}:=2^{5}(1 + (T-r)^{2})(T-r)^{3}\hat{w}_{2,1}c_{\mathbb{T}}c^{2}\overline{c}^{2}$, by virtue of the estimate~\eqref{Auxiliary Estimate 5}. Hence, we complete the proof by setting $c_{2}:=7(c_{2,1} + \cdots + c_{2,7})$.
\end{proof}

\section{Proofs of the convergence result in second moment and the support representation}\label{Section 5}

\subsection{Proofs of Lemmas~\ref{Auxiliary Convergence Lemma} and~\ref{Support Auxiliary Lemma}}\label{Section 5.1}

\begin{proof}[Proof of Lemma~\ref{Auxiliary Convergence Lemma}]
(i) If $\overline{b}_{0} = 0$ holds in~\eqref{C.9}, then~\eqref{Sequence VIE} reduces to a pathwise Volterra integral equation. In this case, pathwise uniqueness and strong existence are covered by the deterministic results in~\cite{KalininVolterra} or can essentially be inferred from~\cite{ProtterVolterra}. Otherwise, we may assume that $\overline{b}_{0}=1$ and introduce a martingale $\null_{n}\overline{Z}\in\mathscr{C}([0,T],\mathbb{R})$ by $\null_{n}\overline{Z}^{r} = 1$ and
\[
\null_{n}\overline{Z}_{t} = \exp\bigg(-\int_{r}^{t}\overline{b}(s)\null_{n}\dot{W}_{s}'\,dW_{s} - \frac{1}{2}\int_{r}^{t}|\overline{b}(s)\null_{n}\dot{W}_{s}|^{2}\,ds\bigg)
\]
for all $t\in [r,T]$ a.s. Then $\null_{n}\overline{W}\in\mathscr{C}([0,T],\mathbb{R}^{d})$ defined via $\null_{n}\overline{W}_{t} := W_{t} + \int_{r}^{r\vee t}\overline{b}(s)\,d\null_{n}W_{s}$ is a $d$-dimensional $(\mathscr{F}_{t})_{t\in [0,T]}$-Brownian motion under the probability measure $\overline{P}_{n}$ on $(\Omega,\mathscr{F})$ given by $\overline{P}_{n}(A):=E[\mathbbm{1}_{A}\,\null_{n}\overline{Z}_{T}]$, by Girsanov's theorem.

We observe that a process $Y\in\mathscr{C}([0,T],\mathbb{R}^{m})$ is a solution to~\eqref{Sequence VIE} under $P$ if and only if it solves the path-dependent stochastic Volterra integral equation
\begin{equation}
Y_{t} = Y_{r} + \int_{r}^{t}\underline{B}(t,s,Y) + B_{H}(t,s,Y)\dot{h}(s)\,ds + \int_{r}^{t}\Sigma(t,s,Y)\,d\null_{n}\overline{W}_{s}
\end{equation}
a.s.~for all $t\in [r,T]$ under $\overline{P}_{n}$. Consequently, pathwise uniqueness and strong existence follow from the stochastic results in~\cite{KalininVolterra} or~\cite{ProtterVolterra} when considering the drift $\underline{B} + B_{H}\dot{h}$ and the diffusion $\Sigma$.

Regarding the claimed estimate, we let $p > 2$ and $\alpha\in [0,1/2 - 1/p)$. Then from Proposition~\ref{Sequence L-p Estimate Proposition} we obtain $c_{p} > 0$ such that~\eqref{Sequence L-p Estimate} holds and the Kolmogorov-Chentsov estimate~\eqref{Kolmogorov-Chentsov Estimate} implies that
\[
E[(\|\null_{n}Y\|_{\alpha,r} - \|\null_{n}\xi^{r}\|_{\infty})^{p}] \leq k_{\alpha,p,p/2-1}c_{p}(T-r)^{p(1/2 - \alpha)}\big(1 + E[\|\null_{n}\xi^{r}\|_{\infty}^{p}]\big)
\]
for every $n\in\mathbb{N}$. Hence, we set $c_{\alpha,p}:=2^{p}k_{\alpha,p,p/2-1}(1 + c_{p})\max\{1,T-r\}^{p(1/2-\alpha)}$, then the triangle inequality gives the desired result.

(ii) Pathwise uniqueness, strong existence and the asserted bound can be directly inferred from (i) by replacing $\underline{B}$ by $\underline{B} + R$, $\overline{B}$ by $0$ and $\Sigma$ by $\overline{B} + \Sigma$, since~\eqref{C.9} holds in this case with $\overline{b} = 0$.
\end{proof}

\begin{proof}[Proof of Lemma~\ref{Support Auxiliary Lemma}]
(i) Pathwise uniqueness, the existence of a unique strong solution and the integrability condition follow from assertion (ii) of Lemma~\ref{Auxiliary Convergence Lemma} by letting $\underline{B} = b$, $B_{H} = \overline{B} = 0$, $\Sigma = \sigma$ and $\xi = \hat{x}$.

(ii) For $h\in W_{r}^{1,p}([0,T],\mathbb{R}^{d})$ we set $F_{h}:=b - (1/2)\rho + \sigma\dot{h}$. First, since $\partial_{x}\sigma(\cdot,s,x)$ is absolutely continuous on $[s,T)$, so is $\rho$ and hence, $F_{h}$ for any $s\in [r,T)$ and each $x\in C([0,T],\mathbb{R}^{m})$. Secondly, there are $c_{0},\lambda_{0}\geq 0$ such that $\max\{|\sigma|,|\partial_{t}\sigma|,|\rho|,|\partial_{t}\rho|\}\leq c_{0}$ and 
\[
|\rho(s,s,x) - \rho(s,s,y)| + |\partial_{t}\rho(t,s,x) - \partial_{t}\rho(t,s,y)| \leq \lambda_{0}\|x-y\|_{\infty}
\]
for all $s,t\in [r,T)$ with $s < t$ and every $x,y\in C([0,T],\mathbb{R}^{m})$. These conditions ensure that the map $F_{h}$ satisfies $|F_{h}(s,s,x)| + |\partial_{t}F_{h}(t,s,x)|\leq c_{1}(1 + |\dot{h}(s)|)\big(1 + \|x\|_{\infty}\big)$ and
\begin{equation*}
|F_{h}(s,s,x) - F_{h}(s,s,y)| + |\partial_{t}F_{h}(t,s,x) - \partial_{t}F_{h}(t,s,y)|\leq \lambda_{1}(1 + |\dot{h}(s)|)\|x-y\|_{\infty}
\end{equation*}
for any $s,t\in [r,T)$ with $s < t$ and each $x,y\in C([0,T],\mathbb{R}^{m})$, where $c_{1}:=3\max\{c_{0},c\}$ and $\lambda_{1}:=2\max\{\lambda_{0},\lambda\}$. As these are all the necessary assumptions, we invoke~\cite{KalininVolterra} to get a unique mild solution $x_{h}$ to~\eqref{Support VIE}, which satisfies $x_{h}\in W_{r}^{1,p}([0,T],\mathbb{R}^{m})$.

To show the the second assertion, we also let $g\in W_{r}^{1,p}([0,T],\mathbb{R}^{d})$. Then for the constant $c_{p,1}:=2^{2p-2}(1+T-r)^{p}\max\{1,T-r\}^{p-1}\max\{c_{0}^{p},\lambda_{1}^{p}\}$ we have
\[
\|x_{g}^{t} - x_{h}^{t}\|_{1,p,r}^{p} \leq c_{p,1} \int_{r}^{t}|\dot{g}(s) - \dot{h}(s)|^{p} + (1 + |\dot{h}(s)|^{p})\|x_{g}^{s}-x_{h}^{s}\|_{1,p,r}^{p}\,ds
\]
for each $t\in [r,T]$, since $\|y\|_{\infty}\leq \max\{1,T-r\}^{1-1/p}\|y\|_{1,p,r}$ for any $y\in W_{r}^{1,p}([0,T],\mathbb{R}^{m})$. Hence, Gronwall's inequality gives $\|x_{g} - x_{h}\|_{1,p,r}^{p}\leq c_{p}\exp(c_{p}\|h\|_{1,p,r}^{p})\|g-h\|_{1,p,r}^{p}$, where we have defined $c_{p}:=c_{p,1}\exp((T-r)c_{p,1})$.
\end{proof}

\subsection{Proofs of Theorems~\ref{Main Convergence Theorem} and~\ref{Support Theorem}}\label{Section 5.2}

\begin{proof}[Proof of Theorem~\ref{Main Convergence Theorem}]
By Lemma~\ref{Convergence along Partitions Lemma}, which is applicable due to Proposition~\ref{Sequence L-p Estimate Proposition} and Corollary~\ref{Sequence L-p Estimate Corollary}, we merely have to show the first assertion, as the second follows from the first. 

In this regard, the decomposition of Proposition~\ref{Main Decomposition Proposition} in second moment, Lemma~\ref{Auxiliary Lemma 2} and a combination of the estimate~\eqref{Auxiliary Estimate 1}  and Proposition~\ref{Auxiliary Proposition} with H\oe lder's inequality show that this limit holds once we can justify that there is $c_{2} > 0$ such that
\[
E\bigg[\max_{j\in\{0,\dots,k_{n}\}}\bigg|\int_{r}^{t_{j,n}}\big(\overline{B}(t_{j,n},s,\null_{n}Y)-\overline{B}(t_{j,n},\underline{s}_{n},\null_{n}Y)\big)\null_{n}\dot{W}_{s} - R(t_{j,n},\underline{s}_{n},\null_{n}Y)\gamma_{n}(s)\,ds\bigg|^{2}\bigg]
\]
does not exceed $c_{2}|\mathbb{T}_{n}|$ for every $n\in\mathbb{N}$. Based on the decomposition~\eqref{Main Remainder Decomposition} and the hypothesis that $\partial_{x}\overline{B}$ is bounded, this fact follows from Proposition~\ref{Remainder Proposition 1} and Lemma~\ref{Remainder Lemma 2}, in conjunction with Lemma~\ref{Auxiliary Lemma 1} and Remark~\ref{Auxiliary Remark}, and Proposition~\ref{Remainder Proposition 3}.
\end{proof}

\begin{proof}[Proof of Theorem~\ref{Support Theorem}]
We let $N_{\alpha}$ denote the $P$-null set of all $\omega\in\Omega$ such that $X(\omega)$ fails to be $\alpha$-H\oe lder continuous on $[r,T]$ and recall that the support of $P\circ X^{-1}$ in $C_{r}^{\alpha}([0,T],\mathbb{R}^{m})$ coincides with the support of the inner regular probability measure
\begin{equation}\label{Support Probability Measure}
\mathscr{B}(C_{r}^{\alpha}([0,T],\mathbb{R}^{m}))\rightarrow [0,1], \quad B\mapsto P(\{X\in B\}\cap N_{\alpha}^{c}).
\end{equation}
Then an application of Theorem~\ref{Main Convergence Theorem} in the case that $\underline{B}=b-(1/2)\rho$, $B_{H} = 0$, $\overline{B} = \sigma$, $\Sigma = 0$ and $\xi = \hat{x}$ gives us~\eqref{Hoelder Limit 1}, which in turn implies that the support of~\eqref{Support Probability Measure} is included in the closure of $\{x_{h}\,|\,h\in W_{r}^{1,p}([0,T],\mathbb{R}^{d})\}$ relative to $\|\cdot\|_{\alpha,r}$.

Now we let $h\in W_{r}^{1,p}([0,T],\mathbb{R}^{d})$ be fixed and recall that for any $n\in\mathbb{N}$ and each $x\in C([0,T],\mathbb{R}^{d})$ there is a unique mild solution $y_{h,n}\in C([0,T],\mathbb{R}^{d})$ to the ordinary integral equation with running value condition
\[
y_{h,n,x}(t) = x(t) - \int_{r}^{r\vee t}\dot{h}(s) - \dot{L}_{n}(y_{h,n,x})(s)\,ds\quad\text{for $t\in [0,T]$.}
\]
As the map $C([0,T],\mathbb{R}^{d})\rightarrow C([0,T],\mathbb{R}^{d})$, $x\mapsto y_{h,n,x}$ is Lipschitz continuous on bounded sets, we may let $\null_{h,n}W\in\mathscr{C}([0,T],\mathbb{R}^{d})$ be given by $\null_{h,n}W_{t}:=y_{h,n,W}(t)$ and introduce a martingale $\null_{h,n}Z\in\mathscr{C}([0,T],\mathbb{R})$ by requiring that $\null_{h,n}Z^{r} = 1$ and
\begin{equation*}
\null_{h,n}Z_{t} = \exp\bigg(\int_{r}^{t}\dot{h}(s)' - \dot{L}_{n}(\null_{h,n}W)(s)'\,dW_{s} - \frac{1}{2}\int_{r}^{t}|\dot{h}(s) - \dot{L}_{n}(\null_{h,n}W)(s)|^{2}\,ds\bigg)
\end{equation*}
for any $t\in [r,T]$ a.s. By Girsanov's theorem, $\null_{h,n}W$ is a $d$-dimensional $(\mathscr{F}_{t})_{t\in [0,T]}$-Brownian motion under the probability measure $P_{h,n}$ on $(\Omega,\mathscr{F})$ given by $P_{h,n}(A):=E[\mathbbm{1}_{A}\null_{h,n}Z_{T}]$ and $X$ is a strong solution to the stochastic Volterra integral equation
\begin{equation*}
X_{t} = X_{r} + \int_{r}^{t}b(t,s,X) + \sigma(t,s,X)\big(\dot{h}(s) - \dot{L}_{n}(\null_{h,n}W)\big)(s)\,ds + \int_{r}^{t}\sigma(t,s,X)\,d\null_{h,n}W_{s}
\end{equation*}
for all $t\in [0,T]$ a.s.~under $P_{h,n}$. Hence, let $\null_{n}Y$ be the unique strong solution to~\eqref{Sequence VIE} when $\underline{B} = b$, $B_{H} = \sigma$, $\overline{B} = -\sigma$ and $\Sigma=\sigma$ with $\null_{n}Y^{r} = \hat{x}^{r}$ a.s., then uniqueness in law implies that $P(\|\null_{n}Y - x_{h}\|\geq\varepsilon) = P_{h,n}(\|X-x_{h}\|_{\alpha,r}\geq \varepsilon)$ for any $\varepsilon > 0$. This shows that Theorem~\ref{Main Convergence Theorem} also yields~\eqref{Hoelder Limit 2} and the claimed representation follows.
\end{proof}

\let\OLDthebibliography\thebibliography
\renewcommand\thebibliography[1]{
  \OLDthebibliography{#1}
  \setlength{\parskip}{1pt}
  \setlength{\itemsep}{2pt}}

\bibliographystyle{plain}

\end{document}